\newtheorem{theorem}{Theorem}[section]
\newtheorem{lemma}[theorem]{Lemma}
\newtheorem{definition}{Definition}[section]
\newtheorem{proposition}[theorem]{Proposition}
\newtheorem{corollary}[theorem]{Corollary}
\theoremstyle{definition}
\theoremstyle{definition}
\newtheorem{remark}{Remark}[section]
\DeclareMathOperator*{\essinf}{ess\,inf}
\DeclareMathOperator*{\esssup}{ess\,sup}
\newcommand\R{{\mathbb R}}
\newcommand{\gplus}{\gamma^+}
\newcommand{\gminus}{\gamma^-}
\newcommand{\Pra}{\rm{Pr}}
\newcommand{\Ra}{{\rm{Ra}}}
\newcommand{\Nu}{{\rm{Nu}}}
\newcommand{\la}{\langle}
\newcommand{\ra}{\rangle}
\providecommand{\keywords}[1]
{
  \noindent\small	
  \textbf{Keywords:} #1
}
\title{Bounds on buoyancy driven flows with Navier-slip conditions on rough boundaries}
\author[1]{Fabian Bleitner}
\author[2]{Camilla Nobili}
\affil[1]{\footnotesize{Department of Mathematics, University of Hamburg, Germany}}
\affil[2]{\footnotesize{School of Mathematics and Physics, University of Surrey, United Kingdom}}
\date{}
\begin{document}

\maketitle

\begin{abstract}
We consider two-dimensional Rayleigh-B\'enard convection with  Navier-slip and fixed temperature boundary conditions at the two horizontal rough walls described by the height function $h$.
We prove rigorous upper bounds on the Nusselt number $\Nu$ which capture the dependence on the curvature of the boundary $\kappa$ and the (non-constant) friction coefficient $\alpha$ explicitly.
If $h\in W^{2,\infty}$ and $\kappa$ satisfies a smallness condition with respect to $\alpha$, we find 
\begin{equation*}
 \Nu\lesssim \Ra^{\frac 12}+\|\kappa\|_{\infty}\,,
\end{equation*}
where $\Ra$ is the Rayleigh number, which agrees with the predicted Spiegel-Kraichnan scaling when $\kappa=0$.
This bound is obtained via local regularity estimates in a small strip at the boundary.
When $h\in W^{3,\infty}$, the functions $\kappa$ and $\alpha$ are sufficiently small in $L^{\infty}$ and the Prandtl number $\Pr$ is sufficiently large, we prove upper bounds using the background field method, which interpolate between $\Ra^{\frac 12}$ and $\Ra^{\frac{5}{12}}$ with non-trivial dependence on $\alpha$ and $\kappa$.
These bounds agree with the result in \cite{drivasNguyenNobiliBoundsOnHeatFluxForRayleighBenardConvectionBetweenNavierSlipFixedTemperatureBoundaries} for flat boundaries and constant friction coefficient.
Furthermore, in the regime $\Pr\geq \Ra^{\frac 57}$, we improve the $\Ra^{\frac 12}$-upper bound, showing
$$\Nu\lesssim_{\alpha,\kappa}\Ra^{\frac 37}\,,$$
where $\lesssim_{\alpha,\kappa}$ hides an additional dependency of the implicit constant on $\alpha$ and $\kappa$. 
\end{abstract}

\keywords{Rayleigh-B\'enard convection, Navier-slip boundary conditions, rough boundaries, scaling laws, upper bounds}


\section{Introduction}
In this paper we deal with the Rayleigh-B\'enard convection problem, modelled by the Boussinesq system for the velocity field $u=(u_1,u_2)\in \R^2$, the scalar temperature field $T$ and pressure $p$
\begin{align}
    \frac{1}{\Pra}(u_t + u\cdot \nabla u ) -\Delta u +\nabla p &= \Ra T e_2\tag{NS}\label{navierStokes}\\
    \nabla \cdot u &= 0\tag{DF}\label{divergenceFreeIncompressibilityCondition}\\
    T_t +u\cdot \nabla T - \Delta T &= 0\tag{AD}\label{heatEquation}
\end{align}
set in the regular and bounded domain 
\begin{align*}
    \Omega=\{(y_1,y_2) \in \mathbb{R}^2 \ | \ 0 < y_1 < \Gamma, h(y_1) < y_2 < 1+h(y_1)\}
\end{align*}
where $h$ describes the height of the bottom boundary and $e_2=(0,1)$. We define the initial data by $u_0(y_1,y_2)=u(y_1,y_2,t)|_{t=0}$ and $T_0(y_1,y_2)=T(y_1,y_2,t)|_{t=0}$. The nondimensional numbers $\Pr$ and $\Ra$ are the Prandtl and Rayleigh number respectively. For a physical definition of these numbers see \eqref{Pr_def} and \eqref{rel} in the Appendix.
We assume periodicity of all variables in the $e_1=(1,0)$-direction and impose
 \begin{equation}
    \label{BC-T}
    \begin{aligned}
        T &= 1  & &\textnormal{ on }\gamma^- \\
        T &= 0  & &\textnormal{ on }\gamma^+
    \end{aligned}
\end{equation}
where $\gamma^-=\lbrace (y_1,y_2)\in \mathbb{R}^2\ \vert \ 0<y_1<\Gamma, y_2=h(y_1)\rbrace$ and $\gamma^+=\lbrace (y_1,y_2)\in \mathbb{R}^2\ \vert \ 0<y_1<\Gamma, y_2=1+h(y_1)\rbrace$ are the bottom and top boundary respectively.
Our analysis wants to capture the role of geometry and boundary conditions in the scaling laws for the Nusselt number $\Nu$, defined as
\begin{align}
    \label{introduction_def_nusselt}
    \Nu = \limsup_{t\to \infty} \frac{1}{t}\int_0^t \frac{1}{|\Omega|}\int_{\gamma^-}n\cdot \nabla T dS dt\,.
\end{align}
For a physical motivation of this definition of the Nusselt number between rough boundaries we refer to \cite{GD16} (Section 3).
In the last thirty years, the problem of deriving scaling laws for the Nusselt number in turbulent convection between two horizontal plates has been highly investigated both in experiments and numerical studies (cf.\cite{AGL09,chillaSchumacher2012} and references therein). 

\begin{figure}
\centering
   \includegraphics[scale=0.9]{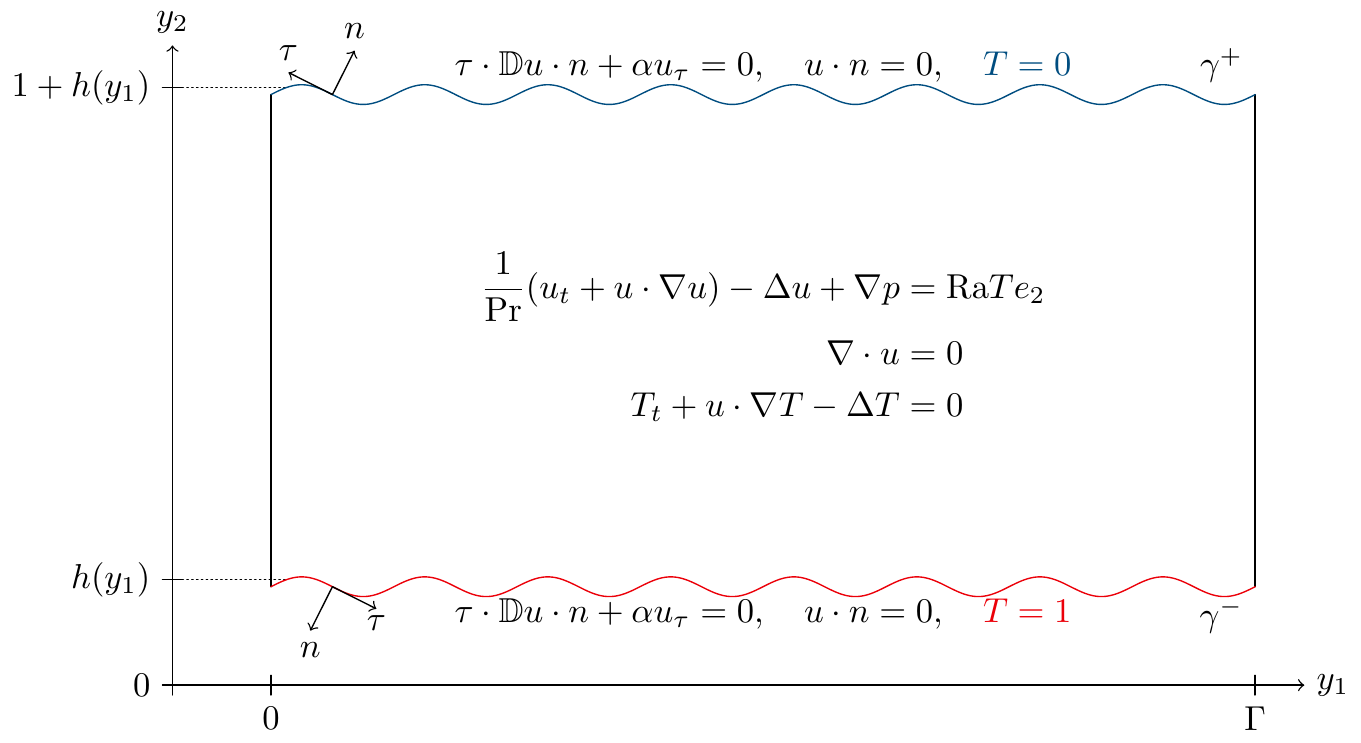}
    \caption{Illustration of the Rayleigh-B\'enard convection problem with Navier-slip boundary conditions and rough boundaries considered in this paper: \eqref{navierStokes}, \eqref{divergenceFreeIncompressibilityCondition}, \eqref{heatEquation}, \eqref{BC-T},\eqref{BC-u}.}
    \label{fig:overview}
\end{figure}
Intentionally, we did not yet specify the boundary conditions for $u$ at $\gamma^+$ and $\gamma^-$. Before doing that, we recall some important results concerning the scaling of the Nusselt number under \textit{no-slip} boundary conditions, which are the most studied for this problem. The no-slip boundary conditions in the case of flat boundaries (i.e. $h=0$) read
\begin{align*}
    u_1=0,\; u_2=0\quad \mbox{ at }\quad y_2=\{0,1\}\,.
\end{align*}
In this setting, Doering and Constantin in 1996 \cite{DC96} rigorously proved the upper bound $\Nu\lesssim \Ra^{\frac 12}$ in three-dimensions. From this seminal result, many works followed aiming at optimizing the $\Ra^{\frac 12}$-upper bound (see \cite{N21}, \cite{FAW21} and references therein).

In the infinite Prandtl number setting, a series of works \cite{CD99}, \cite{DOR06}, \cite{OS11} established the bound $\Nu\lesssim \Ra^{\frac 13}$ up to a logarithmic correction using the \textit{background field method}. This method, based on the decomposition of the temperature $T$ into a steady background profile $\tau$ and fluctuations around it, converts the problem of finding upper bounds for the Nusselt number into a variational problem: finding the background profile $\tau$ with minimal Dirichlet energy, satisfying a certain spectral condition. The limitation of this method in producing the optimal bound for the Nusselt number was shown in \cite{NO17}. The best bound for the Nusselt number for the Rayleigh-B\'enard convection under no-slip conditions and $\Pr=\infty$ is $\Nu\lesssim (\Ra\ln\ln\Ra)^{\frac 13}$ \cite{OS11} and is derived by combining the background field method with a delicate PDE analysis.

In the finite Prandtl number case, Choffrut, Otto and the second author of this paper in \cite{choffrutNobiliOttoUpperBoundsOnNusseltNumberAtFinitePrandtlNumber} improved the perturbative result of Wang \cite{WA08_boundOn} showing $\Nu\lesssim (\Ra\ln\Ra)^{\frac 13}$ for $\Pra\gtrsim (\Ra\ln\Ra)^{\frac 13}$ and the crossover to the bound $\Nu\lesssim (\Pr^{-1}\Ra\ln\Ra)^{\frac 12}$ for $\Pra\lesssim(\Ra\ln\Ra)^{\frac 13}$. 

The question around the "ultimate regime" in the Rayleigh-B\'enard convection problem was recently object of animated discussions \cite{D20} and it remains unclear whether at large Rayleigh number the scaling $\Nu\sim \Ra^{\frac 12}$ will prevail over the scaling $\Nu\sim \Ra^{\frac 13}$ or whether another scaling arises. For physical heuristics of the Kraichnan-Spiegel scaling $\Nu\sim \Ra^{\frac 12}$ and the Malkus scaling $\Nu\sim \Ra^{\frac 13}$ we refer the reader to Section 2.1 in \cite{N21}.

The methods proposed by Doering and Constantin in the nineties to produce bounds for the Nusselt number in boundary-driven convection continues to be fruitfully employed in other problems like "wall-to-wall optimal transport" \cite{tobasco2022}, Rayleigh-B\'enard convection with rotation \cite{DC01}, B\'enard-Marangoni convection \cite{FNW20} and internal heating \cite{AFW21} (see also \cite{Goluskin2016} and references therein). On a related note, let us mention that recently remarkable results have been achieved in the optimal designs for enstrophy-constrained transport \cite{DT19},\cite{kumar2022}.

The no-slip boundary conditions are the most used in theoretical and numerical studies, but whether or not they represent the most "realistic" conditions, is subject to debate (see \cite{N21} and references therein). In 2011 Doering and Whitehead \cite{whiteheadDoeringUltimateState} remarkably proved $\Nu\lesssim \Ra^{\frac{5}{12}}$ in the two-dimensional Rayleigh-B\'enard convection problem between flat horizontal boundaries, with free-slip boundary conditions, i.e.
$$u_2=0 \quad \mbox{ and } \quad\partial_{2}u_1=0 \quad \mbox{ at }\quad y_2=\{0,1\}\,. $$ 
This result rules out the $\Nu\sim \Ra^{\frac 12}$ scaling in this setting. But the question about the optimality of this upper bound remains along with the question whether the scaling $\Nu\sim \Ra^{\frac{5}{12}}$ carries a physical meaning. Motivated by this result, in \cite{drivasNguyenNobiliBoundsOnHeatFluxForRayleighBenardConvectionBetweenNavierSlipFixedTemperatureBoundaries} Drivas, Nguyen and the second author of this paper considered the two-dimensional Rayleigh-B\'enard convection problem with the following conditions on the horizontal plates:
\begin{equation}
    \label{Navier-slip-straight}
    u_2=0 \quad \mbox{ and } \quad \partial_{2}u_1=\frac{1}{L_s}u_1 \quad\mbox{ at }\quad y_2=\{0,1\}\,,
\end{equation}
where $L_s$ is the (constant) "slip-length". Under these assumptions the authors proved the \textit{interpolation bound}
\begin{align*}
    \Nu\lesssim \Ra^{\frac{5}{12}}+\frac{1}{L_s^2}\Ra^{\frac 12}\qquad \mbox{ in the regime } \qquad\Pr\gtrsim \frac{1}{L_s^2} \Ra^{\frac{3}{4}}\,.  
\end{align*}

Relatively few theoretical works have addressed the problem of the Nusselt number scaling in the case of \textit{rough boundaries} in Rayleigh-B\'enard convection: In \cite{SW11} Shishkina and Wagner developed an analytical model to estimate the Nusselt number deviations caused by the wall roughness and in \cite{WS15} the same authors performed direct numerical simulations.
In \cite{GD16} Goluskin and Doering considered the three-dimensional Rayleigh-B\'enard problem between rough boundaries. In particular, under the assumption that the profile $h$ (which may differ at the top and bottom boundaries) is a continuous and piecewise differentiable function of the horizontal coordinates and has squared-integrable gradients, the authors showed $\Nu\lesssim C(\|\nabla h\|_2^2)\Ra^{\frac 12}$.

The boundary conditions in \eqref{Navier-slip-straight} are the simplified version of the original \textit{Navier-slip} boundary conditions \cite{Navier1823}
\begin{equation}
    \label{BC-u}
    \begin{aligned}
        \tau \cdot \mathbb{D}u\cdot n +\alpha u_\tau &= 0 & &\textnormal{ on }\gamma^+\cup \gamma^-,\\
        u\cdot n &= 0 & &\textnormal{ on }\gamma^+\cup \gamma^-.
    \end{aligned}
\end{equation}
Here we denoted with $n$ and $\tau$ the outward unit normal and tangential vector to the boundary respectively, with $u_\tau = u\cdot \tau$ the tangential velocity and with $\mathbb{D}u_{ij} = \frac{1}{2}(\partial_i u_j +\partial_j u_i)$ the strain tensor. The space-dependent function $\alpha\geq 0$ is the friction coefficient and measures the tendency of the fluid to slip on the boundary. It may vary along $\gamma^-$ and $\gamma^+$. The problem is illustrated in Figure \ref{fig:overview}. We notice that the Navier-slip boundary conditions have been studied in a variety of problems in fluid mechanics. They have been considered in problems related to mixing \cite{huWu} as well as rotating systems \cite{dalibardVaret}.
In \cite{amroucheEscobedoGhosh} and \cite{acevdeoAmroucheConcaGhosh} the authors studied the Stokes operator with Navier-slip boundary conditions and the convergence to no-slip boundary conditions as $\alpha\to\infty$.
 
In this paper we want to generalize the result in \cite{drivasNguyenNobiliBoundsOnHeatFluxForRayleighBenardConvectionBetweenNavierSlipFixedTemperatureBoundaries}, considering the original \textit{Navier-slip} boundary conditions \eqref{BC-u}, hence allowing a non-constant (space dependent) friction coefficient. Furthermore, we allow a certain degree of \textit{roughness} at the upper and lower plates, characterized by the curvature $\kappa$, defined as $\frac{d}{d\lambda}\tau = \kappa n$, where $\lambda$ is the parameterization of the boundary by arc length and related to the height function via
\begin{align}
    \label{kappa_h_relation}
    \kappa(x_1) &= \pm \frac{h''(x_1)}{(1+(h'(x_1))^2)^\frac{3}{2}} \textnormal{ on } \gamma^\pm.
\end{align}

Our first, more general result is the following.
\begin{theorem}
\label{Lemma-Ra-One-Half-Bound}
Let $u$ and $T$ solve system \eqref{navierStokes}-\eqref{divergenceFreeIncompressibilityCondition}-\eqref{heatEquation}, with boundary conditions \eqref{BC-T} and \eqref{BC-u}. Assume $h \in W^{2,\infty}[0,\Gamma]$, $u_0\in L^2$, $\alpha>0$ and $\kappa$ satisfies
\begin{align}
    \label{ec}
    |\kappa| \leq 2\alpha + \frac{1}{4\sqrt{1+(h')^2}} \min\left\lbrace 1, \sqrt{\alpha} \right\rbrace
\end{align}
pointwise almost everywhere on $\gamma^-\cup\gamma^+$. 
Then there exists a constant $C>0$ depending only on $\|h'\|_{\infty}$ and $|\Omega|$ such that
\begin{align}\label{ub-R12}
    \Nu \leq C \left(\Ra^\frac{1}{2}+\|\kappa\|_\infty\right)
\end{align}
for all $\Ra\geq 1$.
\end{theorem}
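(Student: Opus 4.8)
The plan is to follow the classical strategy for the $\Ra^{1/2}$ bound (Doering--Constantin) but adapted to the Navier-slip/rough setting via a \emph{local} estimate near the boundary, since the boundary terms generated by integration by parts no longer vanish as they do for no-slip. First I would record the two standard ``balance'' identities obtained by testing the equations against suitable functions and averaging in time: (i) testing \eqref{heatEquation} against $T$ and against a function that is $1$ on $\gamma^-$ and $0$ on $\gamma^+$ gives that $\Nu$ can be rewritten both as $\langle |\nabla T|^2\rangle/|\Omega|$ and in terms of the convective flux $\langle u_2 T\rangle$, up to the usual normalizations; (ii) testing \eqref{navierStokes} against $u$ and using the boundary conditions \eqref{BC-u} yields the energy balance
\begin{equation*}
  \langle \|\mathbb{D}u\|_2^2\rangle + \int_{\gamma^+\cup\gamma^-}\!\!\alpha\, \langle u_\tau^2\rangle\, dS \;=\; \Ra\,\langle u_2 T\rangle\,,
\end{equation*}
so that $\Nu \lesssim 1 + \Ra^{-1}\langle\|\mathbb{D}u\|_2^2\rangle$, i.e.\ it suffices to bound the viscous dissipation. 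Korn's inequality (with Navier-slip boundary terms, which is where $\alpha>0$ and the geometry enter) converts $\|\mathbb{D}u\|_2^2$ into control of $\|\nabla u\|_2^2$.

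The heart of the argument is then to estimate $\langle u_2 T\rangle$ by splitting $\Omega$ into a thin strip $\Omega_\delta$ of width $\delta$ adjacent to $\gamma^-\cup\gamma^+$ and the bulk $\Omega\setminus\Omega_\delta$. In the bulk, $T$ is bounded (maximum principle: $0\le T\le 1$) and one uses $\langle u_2 T\rangle_{\text{bulk}} \lesssim \langle\|u\|_2^2\rangle^{1/2}\lesssim \langle\|\nabla u\|_2^2\rangle^{1/2}$ via Poincaré, which after Young's inequality is absorbed into the dissipation at the cost of an $\Ra^{1/2}$ term. In the strip, I would use $u\cdot n=0$ on the boundary plus a Hardy-type / fundamental-theorem-of-calculus bound in the normal direction to get $|u_2|\lesssim \delta\,\|\partial_n u\|$ pointwise-in-the-normal-variable, and similarly $|T|\lesssim \delta^{1/2}\|\nabla T\|_{L^2(\text{normal line})}$ from the Dirichlet condition; combining and integrating over the strip, $\langle u_2 T\rangle_{\text{strip}}\lesssim \delta^{3/2}\langle\|\nabla u\|_2\|\nabla T\|_2\rangle$. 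Optimizing $\delta$ and using $\langle\|\nabla T\|_2^2\rangle/|\Omega| \sim \Nu$ closes the inequality and yields $\Nu\lesssim \Ra^{1/2}$ — plus a correction proportional to $\|\kappa\|_\infty$ coming from the non-flat change of variables.

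The curvature enters in two places that must be tracked carefully. First, the change of variables flattening $\gamma^\pm$ (using the height function $h$ and arc-length coordinates) introduces metric factors $\sqrt{1+(h')^2}$ and Christoffel-type terms controlled by $h''$, i.e.\ by $\kappa$; these contribute the additive $\|\kappa\|_\infty$ on the right-hand side of \eqref{ub-R12} and require $h\in W^{2,\infty}$. Second, and more delicately, the Korn-type inequality and the boundary dissipation term $\int\alpha u_\tau^2$ pick up a curvature contribution of the wrong sign — integrating the identity $\tfrac{d}{d\lambda}\tau=\kappa n$ by parts produces a term $\int \kappa\, u_\tau^2$ that must be dominated by $\int \alpha u_\tau^2$, which is precisely why the smallness condition \eqref{ec} on $|\kappa|$ relative to $\alpha$ is imposed. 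I expect this step — showing that under \eqref{ec} the full bilinear form (viscous $+$ friction $+$ curvature) remains coercive, uniformly in terms of $\|h'\|_\infty$ and $|\Omega|$ only — to be the main obstacle; everything else is a careful but routine execution of the Doering--Constantin strip argument with the extra bookkeeping forced by the variable coefficients $\alpha$ and $\kappa$.
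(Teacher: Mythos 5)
Your outline gets the peripheral ingredients right — the coercivity of the viscous--friction--curvature form under \eqref{ec} (this is exactly Lemma \ref{lemma-H1-bounded-by-grad-and-bdry-terms}, needed both for the energy decay and to turn \eqref{average-energy-balance} into $\langle|\nabla u|^2\rangle\lesssim \Ra\,\Nu$), and the fundamental-theorem-of-calculus estimates $|u\cdot n|\lesssim \delta^{1/2}\|\partial_2 u\|_{L^2}$, $|T|\lesssim \delta^{1/2}\|\nabla T\|_{L^2}$ in a strip of width $\delta$. But the core of your argument, the treatment of the bulk, does not close. First, the reduction ``$\Nu\lesssim 1+\Ra^{-1}\langle\|\mathbb{D}u\|_2^2\rangle$'' points the wrong way: for rough boundaries the paper only establishes the one-sided inequality \eqref{nusselt-ineq}, $\langle (u_2-\partial_2)T\rangle\leq(1+\max h-\min h)\Nu$, which bounds the \emph{dissipation} by $\Ra\,\Nu$ (that is \eqref{average-energy-balance}); it does not bound $\Nu$ by the dissipation. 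Second, and decisively, your bulk estimate $\langle u_2T\rangle_{\mathrm{bulk}}\lesssim\langle\|\nabla u\|_2^2\rangle^{1/2}$ followed by Young's inequality costs $\Ra^2$ at the level of the dissipation, hence $\Ra^1$ — not $\Ra^{1/2}$ — at the level of $\Nu$; there is no choice of $\epsilon$ that turns $\Ra\langle\|\nabla u\|_2^2\rangle^{1/2}\leq\epsilon\langle\|\nabla u\|_2^2\rangle+C_\epsilon\Ra^2$ into a $\Ra^{3/2}$ cost. Splitting the \emph{global} integral $\langle u_2T\rangle$ into bulk and strip throws away the structure that makes the $\Ra^{1/2}$ bound possible.

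The missing idea is the localization of the Nusselt number itself. Integrating the advection--diffusion equation over the subdomain between $\gamma^-$ and the shifted curve $\gamma(x_2)$ and time-averaging shows that the flux identity \eqref{nusselt-strip}, $\Nu=\langle(uT-\nabla T)\cdot n_+\rangle_{\gamma(x_2)}$, holds for \emph{every} $x_2$; averaging it over $x_2\in(1-\delta,1)$ represents $\Nu$ as $\delta^{-1}$ times an integral over the strip $\Omega_\delta$ alone, with no bulk contribution whatsoever. Your own strip estimates then give $\Nu\lesssim\delta\,\langle|\nabla u|^2\rangle^{1/2}\langle|\nabla T|^2\rangle^{1/2}+\delta^{-1}+\|\kappa\|_\infty\lesssim\delta\,\Ra^{1/2}\Nu+\delta^{-1}+\|\kappa\|_\infty$ using \eqref{nusselt-gradT} and \eqref{average-energy-balance}, and optimizing in $\delta$ yields \eqref{ub-R12}. (A small further correction: the additive $\|\kappa\|_\infty$ does not come from metric factors in a change of variables but from integrating by parts the conductive flux $\delta^{-1}\int_{\Omega_\delta}n_+\cdot\nabla T$, which produces $\int_{\Omega_\delta}|T|\,|\nabla\cdot n_+|$ with $|\nabla\cdot n_+|=|\kappa|$.)
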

This upper bound catches the classical Kraichnan-Spiegel scaling $\Ra^{\frac 12}$ and is uniform in the Prandtl number. The novelty of this (expected) result is to show explicitly the role played by the functions $\alpha$ and $\kappa$ in the bound. We notice that the bound holds under assumption \eqref{ec} which relates the magnitude of $|\kappa|$ and $\alpha$. Since $\kappa$ changes sign, this assumption is crucial to obtain the energy decay estimate (see Lemma \ref{lemmaEnergyDecay}). As we can see from \eqref{ub-R12}, the bound deteriorates as roughness (quantified by $\|\kappa\|_\infty$) increases.
As already remarked, the upper bound in Theorem \ref{Lemma-Ra-One-Half-Bound} seems to indicate the Kraichnan-Spiegel scaling law. However, we are neither able to confirm nor rule out this possibility as we can only produce upper bounds with our methods. In order to confirm the power $\frac 12$ of $\Ra$ in the scaling law, we would need to either derive lower bounds or provide an example of a flow that saturates the bound. Interestingly, in \cite{ZSVL17}, by means of direct numerical simulations, the authors are able to detect two regimes for two-dimensional Rayleigh-B\'enard convection over sinusoidal rough plates (varying height and wavelength of the rough elements) and no-slip conditions: in the first regime (up to $\Ra\sim 10^{9}$) data confirm the exponent $\frac 12$. Letting $\Ra$ grow further (between $\Ra\sim 10^{10}$ and $\Ra\sim 10^{12}$) the data seem to be consistent with the exponent $\frac 13$ (see Figure 2 in \cite{ZSVL17}). As the authors in \cite{ZSVL17} discuss, this picture is counterintuitive, as the system is supposed to become more bulk-dominated increasing $\Ra$. Nevertheless, our bounds do not rule out this possible intriguing picture, that the authors in \cite{ZSVL17} interpret as given by a "reverse role of boundary layer and bulk in the presence of roughness". Let us remark that in this paper we focus on Navier-slip boundary conditions, but the bound in Theorem \ref{Lemma-Ra-One-Half-Bound} would also hold under no-slip boundary conditions (see Remark \ref{noslip-remark}).  This result is not included in our paper since an upper bound catching the $Ra^{\frac 12}$-scaling was already rigorously proven by Goluskin and Doering \cite{GD16} for $H^1$-rough surfaces and no-slip boundary conditions.

The physical prediction based on experiments \cite{Roche20} is that roughness enhances heat transport and, in turn, increases the value of the Nusselt number. Interestingly the following theorem shows that, in the regime of large $\Pra$, if we assume higher regularity of the height function (loosely speaking this would mean that the surface is "less rough"), we obtain a (strictly smaller) bound $\Nu\lesssim \Ra^{\frac 37}$. This would rule out the Kraichnan-Spiegel ultimate regime.

\begin{theorem}
\label{main-theorem}
Let $u$ and $T$ solve the system \eqref{navierStokes}-\eqref{divergenceFreeIncompressibilityCondition}-\eqref{heatEquation}, with boundary conditions \eqref{BC-T} and \eqref{BC-u}.
Let $h\in W^{3,\infty}[0,\Gamma]$, $\alpha\in W^{1,\infty}(\gamma^-\cup\gamma^+)$ and $u_0\in W^{1,r}(\Omega)$ for some $r>2$. Set $\underline{\alpha}:=\min_{\gamma^-\cup\gamma^+}\alpha>0$. Then there exists a constant $0<\bar C<1$ such that for all $\alpha$ and $\kappa$ with
\begin{align}
    \label{theorem-condition-alpha+kappa-small}
    \|\alpha+\kappa\|_\infty \leq \bar C
\end{align}
the following bounds on the Nusselt number hold:
\begin{enumerate}
    \item\label{main-theorem-case-interpolation-kappa-leq-alpha}
        If $|\kappa|\leq \alpha$ on $\gamma^-\cup\gamma^+$, $\Pra\geq \underline{\alpha}^{-\frac{3}{2}}\Ra^\frac{3}{4}$ and $\Ra^{-\frac{1}{2}}\leq \underline{\alpha}$ then
        \begin{align}
            \label{main-theorem-bound-interpolation-kappa-leq-alpha}
            \Nu \leq C_\frac{1}{2} \|\alpha+\kappa\|_{W^{1,\infty}}^2 \Ra^\frac{1}{2} + C_\frac{5}{12}\Ra^\frac{5}{12}.
        \end{align}
    \item\label{main-theorem-case-interpolation-more-general-kappa}
        If $|\kappa|\leq 2\alpha + \frac{1}{4\sqrt{1+(h')^2}}\sqrt{\alpha}$ on $\gamma^-\cup\gamma^+$, $\Pra\geq \underline{\alpha}^{-\frac{3}{2}}\Ra^\frac{3}{4}$ and $\Ra^{-1}\leq \underline{\alpha}$ then
        \begin{align}
            \label{main-theorem-bound-interpolation-more-general-kappa}
            \Nu \leq C_\frac{1}{2} \sqrt{\underline{\alpha}}\|\alpha+\kappa\|_{W^{1,\infty}}^2 \Ra^\frac{1}{2} + C_\frac{5}{12} \underline{\alpha}^{-\frac{1}{12}} \Ra^\frac{5}{12}.
        \end{align}
    \item\label{main-theorem-case-3over7bound}
        If $|\kappa|\leq 2\alpha + \frac{1}{4\sqrt{1+(h')^2}}\sqrt{\alpha}$ on $\gamma^-\cup\gamma^+$ and $\Pra\geq \Ra^\frac{5}{7}$ then
        \begin{align}
            \label{main-theorem-bound-3over7bound}
            \Nu \leq C_\frac{3}{7} \Ra^\frac{3}{7}.
        \end{align}
\end{enumerate}
The constants are given by 
\begin{align*}
    C_\frac{1}{2} &= C(1+\|u_0\|_{W^{1,r}}^2)^{-1}
    \\
    C_\frac{5}{12} &= C \left(\|u_0\|_{W^{1,r}}+\|\dot\alpha\|_\infty+\|\dot\kappa\|_\infty+1\right)^\frac{1}{3}
    \\
    C_\frac{3}{7} &= C \left(\|\alpha+\kappa\|_{W^{1,\infty}}^2 + \underline{\alpha}^{-\frac{1}{2}} + \underline{\alpha}^{-\frac{1}{6}}(\|u_0\|_{W^{1,r}}+\|\dot\alpha\|_\infty+\|\dot\kappa\|_\infty+1)^\frac{1}{3}\right),
\end{align*}
where $\dot \alpha$ and $\dot \kappa$ are the derivatives of $\alpha$ and $\kappa$ along the boundary and $C>0$ denotes a constant depending only on the size of the domain $|\Omega|$, $\|h'\|_{\infty}$ and $r$.
\end{theorem}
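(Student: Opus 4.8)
The plan is to obtain all three bounds through the background field method in the large‑Prandtl regime, following the strategy of \cite{drivasNguyenNobiliBoundsOnHeatFluxForRayleighBenardConvectionBetweenNavierSlipFixedTemperatureBoundaries} (itself built on the free‑slip argument of \cite{whiteheadDoeringUltimateState}) and accounting for the two new features: curved boundaries and a variable friction coefficient. First I would straighten $\gamma^-\cup\gamma^+$ by a boundary‑fitted change of variables $(s,z)$, with $z$ the signed distance to the wall, rewriting \eqref{navierStokes}--\eqref{heatEquation}, the conditions \eqref{BC-u} and the flux \eqref{introduction_def_nusselt} in these coordinates; the Jacobian $1+\kappa z$ and the curved differential operators generate lower‑order terms with factors $\kappa,\dot\kappa,h',h''$, which $h\in W^{3,\infty}$ together with the smallness \eqref{theorem-condition-alpha+kappa-small} lets me treat perturbatively. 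Then I would split $T=\tau+\theta$ with $\tau$ the piecewise‑affine background profile: $\tau=1$ on $\gamma^-$, $\tau=0$ on $\gamma^+$, linear across layers of width $\delta$ (a free parameter) and constant $=\tfrac12$ in the bulk, so that $|\tau'|=\tfrac1{2\delta}$ in the layers, $0$ elsewhere, and $\theta|_{\gamma^-\cup\gamma^+}=0$.

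Testing \eqref{heatEquation} with $\theta$ and time‑averaging gives a Nusselt identity of the form $\Nu\le\tfrac1{2\delta}(1+O(\|\kappa\|_\infty\delta))-\mathcal Q$, where $\mathcal Q=\langle|\nabla\theta|^2\rangle+2\langle\tau'u_2\theta\rangle+(\text{curvature and friction boundary corrections})$, so the whole argument reduces to dominating the indefinite coupling $\langle\tau'u_2\theta\rangle$ (supported in the two layers) and the inertial error coming from \eqref{navierStokes} by the Dirichlet term $\langle|\nabla\theta|^2\rangle$, i.e. to verifying a coercivity (spectral) condition $\mathcal Q\ge0$. For the coupling I would combine three ingredients: (a) the maximum principle $0\le T\le1$, hence $\|\theta\|_\infty\lesssim1$; (b) the scalar vorticity/stream‑function formulation — the curl of \eqref{navierStokes} gives $-\Delta\omega=\Ra\,\partial_1\theta+\tfrac1\Pr(\text{inertia})$ with $u_2=-\partial_s\psi$, $\Delta\psi=\omega$, $\psi|_{\gamma^\pm}=0$, while \eqref{BC-u} turns into a Robin condition $\omega=(2\alpha+\kappa)u_\tau+(\text{l.o.t.})$ on $\gamma^\pm$, which for $\|\alpha+\kappa\|_\infty$ small is a perturbation of the Dirichlet problem $\omega|_{\gamma^\pm}=0$ underlying the free‑slip estimate; (c) elliptic regularity for these problems plus Hardy's inequality $\|\theta/z\|_{L^2(\text{layer})}\lesssim\|\partial_z\theta\|$. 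Together these yield a boundary‑layer bound on $u_2$ sharper than the naive Poincaré bound $\|u_2\|_{L^2(\text{layer})}\lesssim\delta\|\partial_z u_2\|$, and the correction terms produced by the Robin condition are exactly what generates the prefactors $\|\alpha+\kappa\|_{W^{1,\infty}}^2$ and, under the weaker sign assumption on $\kappa$ (compare \eqref{ec}), the extra $\sqrt{\underline{\alpha}}$ and $\underline{\alpha}^{-1/12}$ in the constants.

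To control the inertial error I would establish a priori estimates for the finite‑Prandtl system — uniform‑in‑time bounds on $\|u(t)\|_{W^{1,r}}$, $r>2$, and on $\|\omega_t\|$, $\|u\cdot\nabla\omega\|$ in a suitable sense, starting from $u_0\in W^{1,r}$ via energy decay and parabolic regularity — so that the error is $\Pr^{-1}$ times a controlled quantity; the thresholds $\Pr\gtrsim\underline{\alpha}^{-3/2}\Ra^{3/4}$ (cases \ref{main-theorem-case-interpolation-kappa-leq-alpha}, \ref{main-theorem-case-interpolation-more-general-kappa}) and $\Pr\ge\Ra^{5/7}$ (case \ref{main-theorem-case-3over7bound}) are precisely what makes it subordinate for the respective target $\delta$. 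Finally I would close the spectral constraint $\mathcal Q\ge0$ by optimising $\delta$: with the full Prandtl budget $\delta$ can be pushed down to the "free‑slip" scale, up to the $\|\alpha+\kappa\|_{W^{1,\infty}}$‑corrections, and substituting into $\Nu\lesssim1/\delta$ and separating the unavoidable $O(\Ra^{1/2})$ friction/curvature contribution from the rest yields \eqref{main-theorem-bound-interpolation-kappa-leq-alpha} and \eqref{main-theorem-bound-interpolation-more-general-kappa}; with only $\Pr\ge\Ra^{5/7}$ available the optimal choice is $\delta\sim\Ra^{-3/7}$, giving \eqref{main-theorem-bound-3over7bound}.

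The step I expect to be hardest is the sharp boundary‑layer control of $u_2$ in the curved geometry with genuinely variable $\alpha$: proving that the curvature terms in the curved Stokes/vorticity operator and in the Navier‑slip conditions are truly lower‑order under \eqref{theorem-condition-alpha+kappa-small}, so that the free‑slip gain survives, while keeping explicit track of every $\kappa$‑ and $\alpha$‑dependent constant. The other delicate point — the source of the $W^{1,r}$‑assumption on $u_0$ and of the exact powers of $\Ra$ in the Prandtl thresholds — is making $\tfrac1\Pr(u_t+u\cdot\nabla u)$ uniformly negligible against $\Ra\theta$ on the scale $\delta$, which requires the a priori regularity to be both global in time and strong enough in space.
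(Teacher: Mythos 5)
Your overall strategy (background profile with $\delta$-layers following the rough walls, a spectral condition $\mathcal Q\ge 0$, vorticity information to beat $\Ra^{1/2}$, large $\Pr$ to tame inertia) is the same family as the paper's, but the technical route you sketch has two gaps that I do not see how to close as stated. First, you propose to treat the vorticity equation quasi-statically, $-\Delta\omega=\Ra\,\partial_1\theta+\tfrac1\Pr(\text{inertia})$ with the Navier-slip data $\omega=-2(\alpha+\kappa)u_\tau$ viewed as a perturbation of a Dirichlet problem, and to control the error through uniform-in-time bounds on $\|\omega_t\|$ and $\|u\cdot\nabla\omega\|$. Nothing in your plan produces such bounds from $u_0\in W^{1,r}$, and they are substantially stronger than anything available here; moreover the boundary datum $(\alpha+\kappa)u_\tau$ is itself an unknown that must be bounded uniformly in time before any elliptic estimate for $\omega$ can be used. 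The paper avoids both issues by never solving for $\omega$: it adds the \emph{long-time-averaged} enstrophy balance \eqref{average-enstrophy-balance} to the quadratic form \eqref{quadratic-form}, so that $\omega_t$ only enters as $\tfrac1\Pr\tfrac{d}{dt}\int(\alpha+\kappa)u_\tau^2$ and disappears after averaging, while the boundary flux $n\cdot\nabla\omega$ is rewritten via the tangential Navier--Stokes equation and produces the term $\int_{\gamma^\pm}(\alpha+\kappa)\,u\cdot\nabla p$. Controlling that term is one of the main new difficulties of the rough/variable-$\alpha$ setting and requires the $H^1$ pressure estimate of Proposition \ref{proposition-pressure-bound} together with the uniform-in-time $L^p$ vorticity bound of Lemma \ref{LemmaVorticityBound} (a maximum-principle decomposition, and the actual reason for the hypothesis $u_0\in W^{1,r}$); the pressure never appears in your proposal, and no substitute mechanism is offered.

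Second, the claimed ``boundary-layer bound on $u_2$ sharper than Poincar\'e'' via elliptic regularity plus Hardy's inequality is not substantiated and, as stated, would fail: under Navier-slip the normal velocity vanishes only linearly at the wall ($\partial_z u_2=-\partial_s u_s\neq 0$ there in general), so Hardy applied to $\theta$ combined with $|u\cdot n|\lesssim z\|\partial_z(u\cdot n)\|_\infty$ alone reproduces the $\Ra^{1/2}$ balance. The actual gain in the paper (Lemma \ref{lemma-estimate-theta-u-gradeta}) is the interpolation $\|\partial_2(u\cdot n_-)\|_{L^\infty}^2\le 2\|\partial_2 u\|_{L^2}\|\partial_2^2u\|_{L^2}$ along vertical lines (using that $\partial_2(u\cdot n_-)$ has a zero by \eqref{different_boundaries_not_0_anymore}), which yields the $\delta^{3/2}$ factor at the price of an $H^2$-type quantity that must then be absorbed by the $a\langle|\nabla\omega|^2\rangle$ term of the quadratic form, via Lemma \ref{lemma_u_bounded_by_omega} and the smallness \eqref{theorem-condition-alpha+kappa-small} (cf.\ \eqref{Q-estimation-up}). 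Relatedly, your plan does not explain how the energy balance and the coercivity Lemma \ref{lemma-H1-bounded-by-grad-and-bdry-terms} enter, yet these are what distinguish the case $|\kappa|\le\alpha$ from the weaker condition (via $\|\omega\|_2^2\le\|\nabla u\|_2^2+\int(2\alpha+\kappa)u_\tau^2$ versus $\|\omega\|_2^2\le C\|u\|_{H^1}^2$) and generate the $\sqrt{\underline{\alpha}}$, $\underline{\alpha}^{-1/12}$ factors; likewise the choices $a\sim\Ra^{-3/2}$ versus $a\sim\Ra^{-11/7}$, which produce the $\Ra^{5/12}$ and $\Ra^{3/7}$ exponents and the exact Prandtl thresholds, are asserted rather than derived. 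The global boundary-fitted change of variables is harmless but also unnecessary: the paper works directly in the curved domain and only flattens the boundary inside the elliptic-regularity lemma for the stream function.
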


First of all, we notice that our results in Theorem \ref{Lemma-Ra-One-Half-Bound} and Theorem \ref{main-theorem} also cover the case of flat-boundaries and generic friction coefficient $\alpha$. In fact, all results (i.e. \eqref{ub-R12}, \eqref{main-theorem-bound-interpolation-kappa-leq-alpha}, \eqref{main-theorem-bound-interpolation-more-general-kappa} and \eqref{main-theorem-bound-3over7bound}) hold setting $\kappa=0$.

We observe that Theorem \ref{main-theorem} only holds under the smallness assumption \eqref{theorem-condition-alpha+kappa-small}, while there is no such restriction in Theorem \ref{Lemma-Ra-One-Half-Bound}. If $\kappa$ and $\alpha$ are big, and related through \eqref{ec}, then the bound \eqref{ub-R12} holds.

Also note that Theorem \ref{main-theorem} requires the rather restrictive assumption $h\in W^{3,\infty}$ to ensure $\kappa\in W^{1,\infty}$, while Theorem \ref{Lemma-Ra-One-Half-Bound} assumes $h\in W^{2,\infty}$. The latter assumption is needed to control $\kappa \in L^{\infty}$ due to the boundary term arising in the energy balance \eqref{energy-balance}, while the former is necessary to get higher order estimates as can be seen in Lemma \ref{lemma_u_bounded_by_omega} or when estimating the terms arising from the vorticity balance in \eqref{Q-estimation-up}.

The interpolation bound \eqref{main-theorem-bound-interpolation-kappa-leq-alpha} coincides with the result in \cite{drivasNguyenNobiliBoundsOnHeatFluxForRayleighBenardConvectionBetweenNavierSlipFixedTemperatureBoundaries}, when $\kappa=0$ and $\alpha=\frac{1}{L_s}$. The main advantage of the interpolation results \eqref{main-theorem-bound-interpolation-kappa-leq-alpha} and \eqref{main-theorem-bound-interpolation-more-general-kappa} is in the regime of small $\|\alpha\|_{W^{1,\infty}}$ and $\|\kappa\|_{W^{1,\infty}}$. In fact, notice that if $\|\kappa\|_{W^{1,\infty}}\sim \Ra^{-\nu}$ and $\|\alpha\|_{W^{1,\infty}}\sim \Ra^{-\mu}$ (for a physical motivation of this scaling see the \hyperlink{kappaScalingTarget}{Appendix}), then \eqref{main-theorem-bound-interpolation-kappa-leq-alpha} translates into 
$$\Nu\lesssim \begin{cases} \Ra^{-2\mu+\frac 12}& \mbox{ if } \mu<\frac{1}{24}\\\Ra^{\frac{5}{12}}& \mbox{ if } \frac{1}{24}\leq\mu<\frac{1}{2}\end{cases}$$
and $\nu\geq \mu$. This means that the best bound is achieved in the case $\frac{1}{24}\leq\mu<\frac{1}{2}$ and $\Pr\gtrsim \Ra^{\frac 32\mu+\frac 34}$. Notice that, while \eqref{main-theorem-bound-interpolation-kappa-leq-alpha} holds only under the assumption $|\kappa|\leq \alpha$, the upper bound \eqref{main-theorem-bound-interpolation-more-general-kappa} holds under the weaker assumption $|\kappa|\leq 2\alpha + \frac{1}{4\sqrt{1+(h')^2}}\sqrt{\alpha}$ on $\gamma^-\cup\gamma^+$, allowing $|\kappa|>\alpha$ especially when $\alpha$ and $|\kappa|$ are very small.
We observe that in the regime of small $\alpha$ and $\kappa$, say $\|\alpha\|_{W^{1,\infty}},\|\kappa\|_{W^{1,\infty}}\leq 1$, the constant $C_\frac{5}{12}$ becomes independent of $\alpha$ and $\kappa$.
Finally we remark that if the condition $\Ra^{-1}\leq \underline{\alpha}$ for \eqref{main-theorem-bound-interpolation-more-general-kappa} is violated, then \eqref{ub-R12} yields a stricter bound.

Interestingly \eqref{main-theorem-bound-3over7bound} improves the upper bound \eqref{ub-R12} in the case of small $\|\alpha+\kappa\|_\infty$ and big $\Pra$, i.e. $\Pra\geq \Ra^\frac{5}{7}$. While the interpolation bound \eqref{main-theorem-bound-interpolation-kappa-leq-alpha} yields a better result if $\alpha\sim \Ra^{-\mu}$, \eqref{main-theorem-bound-3over7bound} provides the sharpest bound if $\alpha$ and $\kappa$ are independent of $\Ra$. Notice that, differently from the constant prefactor in \eqref{ub-R12}, the constant $C_{\frac 37}$ also depends on $\alpha$ and $\kappa$.

\section{Preliminaries}
We start with introducing some notation and facts we will use in the whole paper. 
We will often use that the tangential vector to the boundary $\tau$ can be written as $\tau = n^\perp=(-n_2,n_1)$, where $n$ is outward unit normal and $u_\tau = u\cdot \tau$. 
 The curvature of the boundaries $\kappa$ is given by $\frac{d}{d\lambda} \tau = \kappa n$, where $\lambda$ is the parameterization of the boundaries by arc length in $\tau$-direction. The problem is illustrated in Figure \ref{fig:overview}.

We will use the following notation to denote space-time averages
\begin{align*}
    \langle f \rangle &= \limsup_{t\to\infty}\frac{1}{t}\int_0^t \frac{1}{|\Omega|}\int_\Omega f \ dy\ dt,\\
    \langle f \rangle_{\gamma^-} &= \limsup_{t\to\infty}\frac{1}{t}\int_0^t \frac{1}{|\Omega|}\int_{\gamma^-} f \ dS\ dt,\\
    \langle f \rangle_{\gamma^-\cup\gamma^+} &= \limsup_{t\to\infty}\frac{1}{t}\int_0^t \frac{1}{|\Omega|}\int_{\gamma^-\cup\gamma^+} f \ dS\ dt.
\end{align*}
Moreover, on the set 
\begin{equation}\label{strip-av}
    \gamma(x_2)=\left\lbrace(y_1,y_2)\ \middle | \ 0<y_1<\Gamma, y_2=h(y_1)+x_2 \right\rbrace,
\end{equation}
where $0\leq x_2\leq 1$, illustrated in Figure \ref{fig:subdomains}, we define the average
\begin{align*}
    \langle f \rangle_{\gamma(x_2)} &=\limsup_{t\to\infty}\frac{1}{t}\int_0^t \frac{1}{|\Omega|}\int_{\gamma(x_2)} f\ dS\,dt.
\end{align*}

In what follows we will consider $0\leq T_0\leq 1$, implying
\begin{align}
    \label{maximum-principle}
    \|T\|_\infty \leq 1
\end{align}
for all times $t>0$, by the maximum principle for the temperature.

\begin{figure}
    \includegraphics[width=\textwidth]{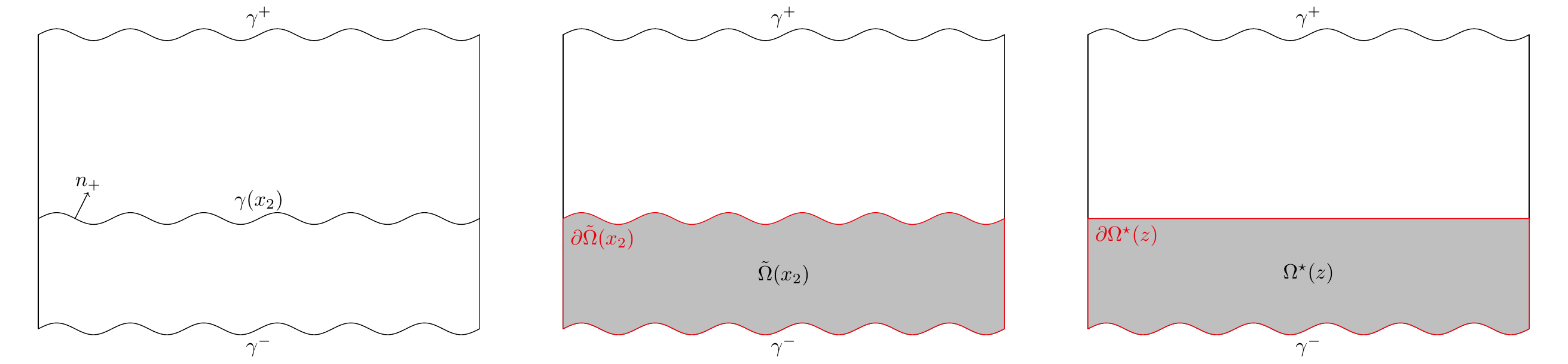}
    \caption{Illustrations of $\gamma(x_2)$, $\tilde\Omega(x_2)$ and $\Omega^\star(z)$}
    \label{fig:subdomains}
\end{figure}

We redefine the Nusselt number (see \eqref{introduction_def_nusselt}) adopting the more compact brackets notation:
\begin{definition}
    The Nusselt number is defined as
    \begin{equation}
        \Nu=\langle n \cdot \nabla T \rangle_{\gamma^-}.
    \end{equation}
\end{definition}
This number admits other equivalent representations that will be useful in our future arguments.
\begin{proposition}
\label{temp_review_prop_nusselt_identities}
The Nusselt number satisfies
\begin{align}
    \Nu={}&\la|\nabla T|^2\ra\label{nusselt-gradT}\\
    ={}& \langle  (uT - \nabla T)\cdot n_+ \rangle_{\gamma(x_2)} \label{nusselt-strip}\\
    \geq{}& \frac{1}{1+\max h -\min h}\langle (u_2-\partial_2)T \rangle\,.\label{nusselt-ineq}
\end{align}
where $n_+$ is the normal at the curve $\gamma(x_2)$ pointing in the same direction as $n$ on $\gamma^+$, illustrated in Figure \ref{fig:subdomains}.
\end{proposition}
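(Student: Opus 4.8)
The plan is to derive all three representations from the transport--diffusion equation \eqref{heatEquation} together with the boundary conditions \eqref{BC-T}, \eqref{BC-u} and the incompressibility \eqref{divergenceFreeIncompressibilityCondition}, exploiting that the flux $uT - \nabla T$ is divergence-free in space up to the time derivative of $T$. First I would establish the ``strip identity'' \eqref{nusselt-strip}. Fix $x_2\in[0,1]$ and let $\tilde\Omega(x_2)$ be the subdomain between $\gamma^-$ and $\gamma(x_2)$ (as in Figure \ref{fig:subdomains}). Integrating \eqref{heatEquation} over $\tilde\Omega(x_2)$ and using the divergence theorem gives
\begin{align*}
    \frac{d}{dt}\int_{\tilde\Omega(x_2)} T\,dy = \int_{\partial\tilde\Omega(x_2)} (\nabla T - uT)\cdot n_{\mathrm{out}}\,dS,
\end{align*}
where $u\cdot n=0$ on $\gamma^-$ kills the advective contribution there, periodicity in $y_1$ cancels the lateral pieces, and on $\gamma^-$ the normal $n_{\mathrm{out}}$ is $-n$. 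Rearranging, time-averaging via $\limsup_{t\to\infty}\frac1t\int_0^t\cdot\,dt$, and noting that $\int_{\tilde\Omega(x_2)}T\,dy$ is bounded uniformly in time by \eqref{maximum-principle} so its time-averaged derivative vanishes, yields $\langle n\cdot\nabla T\rangle_{\gamma^-} = \langle (uT-\nabla T)\cdot n_+\rangle_{\gamma(x_2)}$, which is \eqref{nusselt-strip}. (One should check the standard fact that the $\limsup$-average of an exact derivative of a bounded quantity is zero; this is where boundedness of $T$ enters.)

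Next I would obtain \eqref{nusselt-gradT}. Averaging \eqref{nusselt-strip} over $x_2\in[0,1]$ and using the coarea/Fubini structure of the strips $\gamma(x_2)$ gives $\Nu = \langle (uT-\nabla T)\cdot\nabla y_2'\rangle_\Omega$ suitably interpreted; more directly, multiply \eqref{heatEquation} by $T$ and integrate over $\Omega$. The advection term $\int_\Omega (u\cdot\nabla T)T\,dy = \frac12\int_\Omega u\cdot\nabla(T^2) = -\frac12\int_\Omega(\nabla\cdot u)T^2 + \frac12\int_{\partial\Omega}(u\cdot n)T^2 = 0$ by \eqref{divergenceFreeIncompressibilityCondition} and $u\cdot n=0$. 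The diffusion term integrated by parts gives $\int_\Omega |\nabla T|^2 - \int_{\partial\Omega}T\,n\cdot\nabla T$; on $\gamma^+$ the factor $T=0$ kills the boundary term, on $\gamma^-$ we have $T=1$ so it equals $-\int_{\gamma^-}n\cdot\nabla T\,dS$. Together with $\frac12\frac{d}{dt}\int_\Omega T^2$, time-averaging (again the exact-derivative term drops by boundedness) produces $\langle|\nabla T|^2\rangle = \langle n\cdot\nabla T\rangle_{\gamma^-} = \Nu$, i.e. \eqref{nusselt-gradT}.

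Finally, for \eqref{nusselt-ineq} I would integrate \eqref{nusselt-strip} in $x_2$ over $[0,1]$ and convert the strip-average into a full volume average. Since the map $(y_1,x_2)\mapsto(y_1,h(y_1)+x_2)$ has unit Jacobian and $\gamma(x_2)$ sweeps out $\Omega$ as $x_2$ ranges over $[0,1]$, while $n_+\,dS$ along $\gamma(x_2)$ contributes the $e_2$-component up to the $y_1$-integration, one gets $\int_0^1 \langle(uT-\nabla T)\cdot n_+\rangle_{\gamma(x_2)}\,dx_2 = \langle (u_2 - \partial_2)T\rangle$ up to the normalization by the vertical extent; the factor $\frac{1}{1+\max h - \min h}$ appears because $|\Omega|$ in the volume average differs from the ``$\Gamma\times 1$'' normalization implicit in the strip by at most $1+\max h-\min h$, and one uses that $\langle(uT-\nabla T)\cdot n_+\rangle_{\gamma(x_2)}$ is independent of $x_2$ (it equals $\Nu$) to turn the equality into the stated inequality. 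The main obstacle, and the only genuinely delicate point, is the careful bookkeeping of the non-flat geometry: keeping track of the arclength element $dS$ versus $dy_1$ on the tilted curves $\gamma(x_2)$, the precise relation between $n_+$ and $e_2$, and the Jacobian of the flattening change of variables, so that the constant $1+\max h-\min h$ comes out correctly. Everything else is a routine integration-by-parts plus the observation that time-averaged total derivatives of bounded quantities vanish.
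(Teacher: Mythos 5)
Your arguments for \eqref{nusselt-gradT} and \eqref{nusselt-strip} are correct and essentially identical to the paper's: test \eqref{heatEquation} with $T$ over $\Omega$, respectively integrate \eqref{heatEquation} over $\tilde\Omega(x_2)$, and kill the time-derivative terms in the long-time average using boundedness of $T$.

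Your argument for \eqref{nusselt-ineq}, however, has a genuine gap. On the tilted curve $\gamma(x_2)$ one has $n_+\,dS=(-h',1)\,dy_1$, so
\begin{align*}
\int_{\gamma(x_2)}(uT-\nabla T)\cdot n_+\,dS=\int_0^\Gamma\bigl[(u_2T-\partial_2T)-h'(y_1)\,(u_1T-\partial_1T)\bigr]\,dy_1\,,
\end{align*}
and integrating over $x_2\in[0,1]$ with the unit-Jacobian flattening map produces $\int_\Omega(u_2T-\partial_2T)\,dy$ \emph{plus} the uncontrolled cross term $-\int_\Omega h'(u_1T-\partial_1T)\,dy$, which has no sign and does not vanish for general $h$. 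Your claim that ``$n_+\,dS$ contributes the $e_2$-component up to the $y_1$-integration'' is precisely where this term is dropped. Moreover, your explanation of the constant is off: the $x_2$-integration has length exactly $1$ and $|\Omega|=\Gamma$ exactly, so no factor $1+\max h-\min h$ emerges from a normalization mismatch. The paper avoids both problems by slicing with \emph{horizontal} planes, i.e.\ integrating $\nabla\cdot(\nabla T-uT)$ over $\Omega^\star(z)=\{y_2\leq z\}\cap\Omega$ and then over $z\in[\min h,\,1+\max h]$ (length $1+\max h-\min h$, which is where the constant comes from). The flux through $\{y_2=z\}$ is then purely vertical, and the residual boundary contributions on $\gamma^\pm\cap\{y_2\leq z\}$ are handled by sign: $n_+\cdot\nabla T\leq 0$ on $\gamma^+$ and $n_-\cdot\nabla T\geq 0$ on $\gamma^-$, which is also why the final statement is an inequality rather than an identity. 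You would need to adopt this horizontal-slicing decomposition (or otherwise control the $h'$ cross term) to complete the proof.
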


\begin{proof}
Argument for \eqref{nusselt-gradT}: Testing the temperature equation with $T$, integrating by parts and using the the divergence-free condition and the boundary conditions for $T$, we obtain
\begin{equation*}
    \frac{1}{2}\frac{d}{dt}\|T\|_2^ 2 = -\|\nabla T\|_2^2 + \int_{\gamma^-} n \cdot \nabla T.
\end{equation*}
Taking the long-time averages and using \eqref{maximum-principle}, the maximum principle for the temperature, $T$ is universally bounded in time and we get
\begin{equation*}
    0=\lim_{t\to\infty} \frac{1}{2t} \frac{1}{|\Omega|} \left(\|T\|_2^2-\|T_0\|_2^2\right) 
    =-\langle |\nabla T|^2\rangle +\langle n\cdot \nabla T\rangle_{\gamma^-}.
\end{equation*}

Argument for \eqref{nusselt-strip}: For $0\leq x_2\leq 1$, we define the sets 
\begin{equation}
    \label{definition_OmegaTilde}
            \tilde\Omega(x_2)=\left\lbrace (y_1,y_2)\in \mathbb{R}^2\ \middle \vert\ 0<y_1<\Gamma,\ h(y_1)<y_2<h(y_1)+x_2 \right\rbrace,
\end{equation}
illustrated in Figure \ref{fig:subdomains}, and integrate the equation for $T$, obtaining
 \begin{align*}
            \int_{\tilde\Omega(x_2)} T_t &= \int_{\tilde\Omega(x_2)} \nabla \cdot(\nabla T -uT) 
            =\int_{\partial\tilde\Omega(x_2)} n\cdot (\nabla T - u T)\\
            &=\int_{\gamma^-} n_-\cdot (\nabla T - u T)+\int_{\gamma(x_2)} n_+\cdot (\nabla T - u T)\\
            &=\int_{\gamma^-} n_-\cdot \nabla T +\int_{\gamma(x_2)} n_+\cdot (\nabla T - u T)
\end{align*}
where we used the incompressibility condition and that $u\cdot n=0$ at $\gminus$.
Taking the long-time average we get
\begin{equation*}
    \langle n\cdot \nabla T\rangle_{\gamma^-}=\la (uT-\nabla T)\cdot n_+\ra_{\gamma(x_2)}.    
\end{equation*}
In particular, from this identity we infer that the Nusselt number is independent of $x_2$.

Argument for \eqref{nusselt-ineq}: Let us define the set $\Omega^\star(z)=\{y_2\leq z\}\cap \Omega$, as shown in Figure \ref{fig:nusselt-inequality}, and write
\begin{align*}
    \int_{\partial\Omega^{\star}(z)}n\cdot (\nabla T-uT)&=\int_{\gminus\cap\{y_2\leq z\}}n_-\cdot (\nabla T-uT)+\int_{\gplus\cap\{y_2\leq z\}}n_+\cdot (\nabla T-uT)
    \\&\qquad+\int_{\Omega\cap\{y_2=z\}}n_+\cdot (\nabla T-uT).
\end{align*}

\begin{figure}
    \begin{center}
        \includegraphics[width=0.5\textwidth]{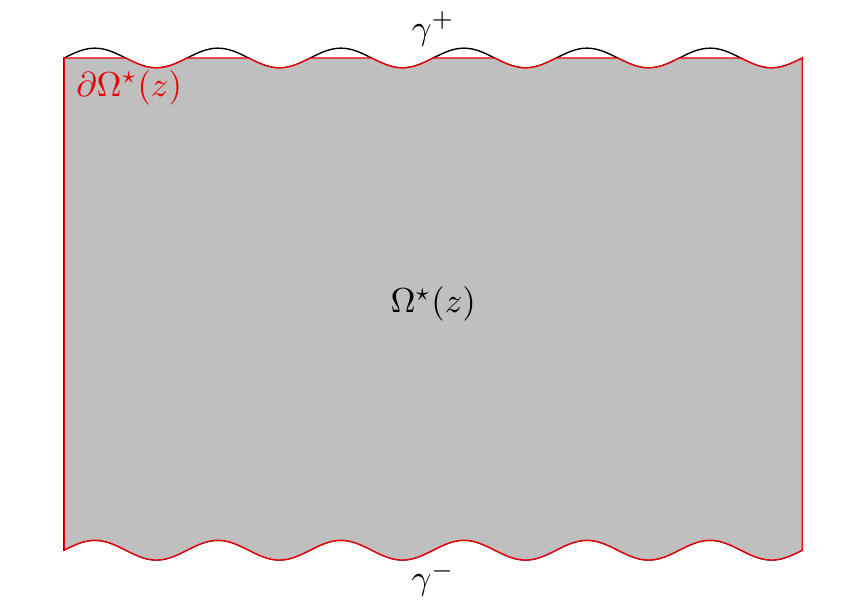}    
    \end{center}
    \caption{Illustration of $\Omega^\star(z)$}
    \label{fig:nusselt-inequality}
\end{figure}
Integrate the previous equation in $z$ between $\min h$ and $1+\max h$ and write
\begin{equation*}
    \int_{\min h}^{1+\max h}\int_{\partial\Omega^{\star}(z)}n\cdot (\nabla T-uT)\, dS\, dz=A+B+C,
\end{equation*}
where 
\begin{align*}
    A&=\int_{\min h}^{1+\max h}\int_{\gminus\cap\{y_2\leq z\}}n_-\cdot (\nabla T-uT)\, dS\, dz\\
    B&=\int_{\min h}^{1+\max h}\int_{\gplus\cap\{y_2\leq z\}}n_+\cdot (\nabla T-uT)\, dS\, dz\\
    C&=\int_{\min h}^{1+\max h}\int_{\Omega\cap\{y_2=z\}}n_+\cdot (\nabla T-uT)\, dS\, dz.
\end{align*}
We analyze the three terms separately: 

Term C: We notice that this term can be simply rewritten as
\begin{align*}
    C=\int_{\min h}^{1+\max h}\int_{\Omega\cap\{y_2=z\}} (\partial_2 T-u_2T)\, dS\, dz=\int_{\Omega} (\partial_2T-u_2T).
\end{align*}

Term B: This integral has a sign, in fact
\begin{align}
    \label{nusselt_change_different_profiles}
    B=\int_{\min h}^{1+\max h}\int_{\gplus\cap\{y_2\leq z\}}n_+\cdot \nabla T\, dS\, dz\leq 0,  
\end{align}
since, at $\gamma^+$ we have $n_+\cdot \nabla T\leq 0$.

Term A: We decompose the integral in A further:
\begin{align*}
    A=\left(\int_{\min h}^{\max h}+\int_{\max h}^{1+\max h}\right)\int_{\gminus\cap\{y_2\leq z\}}n_-\cdot (\nabla T-uT)\, dS\, dz=:A_1+A_2.
\end{align*}
Notice that
\begin{align*}
    A_2&=\int_{\max h}^{1+\max h}\int_{\gminus}n_-\cdot (\nabla T-uT)\, dS\, dz=\int_{0}^{1} dz\int_{\gminus}n_-\cdot (\nabla T-uT)\, dS 
    \\
    &=\int_{\gminus}n_-\cdot \nabla T.
\end{align*}
The term $A_1$ instead will be estimated as follows
\begin{align*}
    \int_{\min h}^{\max h}&\int_{\gminus\cap\{y_2\leq z\}}n_-\cdot (\nabla T-uT)\, dS\, dz
    \\
    &=\int_{\min h}^{\max h}\int_{\gminus\cap\{y_2\leq z\}}n_-\cdot \nabla T\, dS\, dz
    \leq\int_{\min h}^{\max h}\int_{\gminus}n_-\cdot \nabla T\, dS\, dz
    \\
    &= (\max h - \min h)\int_{\gminus}n_-\cdot \nabla T,
\end{align*}
where in the first inequality we used that $n_-\cdot \nabla T\geq 0$ at $\gminus$.

Putting all together, we obtain
\begin{align*}
    \int_{\min h}^{1+\max h}&\int_{\partial\Omega^{\star}}n\cdot (\nabla T-uT)\, dS\, dz
    \\
    &\leq (1+\max h - \min h)\int_{\gminus}n_-\cdot \nabla T+\int_{\Omega} (\partial_2T-u_2T).
\end{align*}
Taking the long-time average and observing that
\begin{align*}
    \limsup_{t\rightarrow \infty}\frac{1}{t}\int_0^t &\int_{\min h}^{1+\max h}\int_{\partial\Omega^{\star}(z)}n\cdot (\nabla T-uT)\ dS\ dz\ dt
    \\
    &=\limsup_{t\rightarrow \infty}\frac{1}{t}\int_0^t \int_{\min h}^{1+\max h}\int_{\Omega^{\star}(z)} (\Delta T-u\cdot \nabla T)\ dy\ dz\ dt
    \\
    &=\limsup_{t\rightarrow \infty}\frac{1}{t}\int_0^t\frac{d}{dt} \int_{\min h}^{1+\max h}\int_{\Omega^{\star}(z)} T \ dy\ dz\ dt 
    =0\,
\end{align*}
by the maximum principle for $T$, we have 
\begin{align*}
    0\leq (1+\max h-\min h)\la n_-\cdot \nabla T\, \ra_{\gminus}+\la\partial_2T-u_2T\ra
\end{align*}
implying
\begin{align*}
     \la u_2T-\partial_2T\ra\leq(1+\max h-\min h)\la n_-\cdot \nabla T\ra_{\gminus}.
\end{align*}
\end{proof}

\section{A-priori bounds}\label{Section-A-Priori-Bounds}
In this section we collect a-priori bounds on the energy and enstrophy of the solution $u$ and derive pressure estimates that will be used to prove the main result in the Section \ref{section-four}. 

\subsection{A-priori estimate for the velocity}
\begin{proposition}[Energy Balance]
Strong solutions of \eqref{navierStokes}, \eqref{divergenceFreeIncompressibilityCondition}, \eqref{BC-u} satisfy
\begin{align}
    \label{energy-balance}
    \frac{1}{2 \Pra} \frac{d}{dt}\|u\|_2^2 + \|\nabla u\|_2^2 + \int_{\gamma^+\cup \gamma^-} (2\alpha+\kappa) u_\tau^2= \Ra\int_{\Omega} T u_2.
\end{align}
\end{proposition}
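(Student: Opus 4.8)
The plan is to test the momentum equation \eqref{navierStokes} with $u$ in $L^2(\Omega)$ and treat the resulting terms one at a time; throughout, periodicity in $e_1$ lets us regard $\partial\Omega$ as $\gplus\cup\gminus$, since the contributions on the lateral sides $y_1\in\{0,\Gamma\}$ cancel in pairs. The time-derivative term yields $\frac{1}{2\Pra}\frac{d}{dt}\|u\|_2^2$. The inertial term vanishes: write $(u\cdot\nabla u)\cdot u = u\cdot\nabla(\tfrac12|u|^2)$, so that after integration by parts the volume term drops by \eqref{divergenceFreeIncompressibilityCondition} and the boundary term drops by $u\cdot n = 0$ from \eqref{BC-u}. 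The same two facts kill the pressure term, $\int_\Omega\nabla p\cdot u = -\int_\Omega p\,\nabla\cdot u + \int_{\gplus\cup\gminus}p\,(u\cdot n) = 0$. The buoyancy term is simply $\Ra\int_\Omega T\,e_2\cdot u = \Ra\int_\Omega T u_2$. Everything therefore reduces to the viscous term.

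For the viscous term I would use $\nabla\cdot u = 0$ to write $\Delta u = 2\,\nabla\cdot\mathbb{D}u$ and integrate by parts,
\begin{equation*}
 -\int_\Omega \Delta u\cdot u = 2\int_\Omega \mathbb{D}u:\nabla u - 2\int_{\gplus\cup\gminus}(\mathbb{D}u\cdot n)\cdot u = 2\int_\Omega|\mathbb{D}u|^2 - 2\int_{\gplus\cup\gminus}(\mathbb{D}u\cdot n)\cdot u,
\end{equation*}
the last equality using the symmetry of $\mathbb{D}u$. On $\gplus\cup\gminus$ one has $u = u_\tau\tau$, hence $(\mathbb{D}u\cdot n)\cdot u = u_\tau\,(\tau\cdot\mathbb{D}u\cdot n) = -\alpha u_\tau^2$ directly from the Navier-slip condition \eqref{BC-u}. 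To rewrite the bulk term I would use the pointwise identity, valid since $\nabla\cdot u = 0$,
\begin{equation*}
 2|\mathbb{D}u|^2 = |\nabla u|^2 + \sum_{i,j}\partial_i u_j\,\partial_j u_i = |\nabla u|^2 + \nabla\cdot\big((u\cdot\nabla)u\big),
\end{equation*}
so that $2\int_\Omega|\mathbb{D}u|^2 = \int_\Omega|\nabla u|^2 + \int_{\gplus\cup\gminus}n\cdot\big((u\cdot\nabla)u\big)$.

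The one delicate point — and the step I expect to require the most care — is the curvature boundary term $\int_{\gplus\cup\gminus}n\cdot\big((u\cdot\nabla)u\big)$. On the boundary $(u\cdot\nabla)$ acts as $u_\tau$ times the arclength derivative $\frac{d}{d\lambda}$ along the curve, so, using $u = u_\tau\tau$ together with the defining relation $\frac{d}{d\lambda}\tau = \kappa n$,
\begin{equation*}
 (u\cdot\nabla)u\big|_{\gplus\cup\gminus} = u_\tau\frac{d}{d\lambda}(u_\tau\tau) = u_\tau\frac{du_\tau}{d\lambda}\,\tau + u_\tau^2\kappa\, n,
\end{equation*}
whence $n\cdot\big((u\cdot\nabla)u\big) = \kappa u_\tau^2$. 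One must check that this holds with the \emph{same} sign on $\gplus$ and on $\gminus$, i.e. that it is consistent with the orientation $\tau = n^\perp$ and the sign convention \eqref{kappa_h_relation} for $\kappa$; with that convention the two boundary pieces combine into a single integral over $\gplus\cup\gminus$. Collecting the terms,
\begin{equation*}
 -\int_\Omega \Delta u\cdot u = \int_\Omega|\nabla u|^2 + \int_{\gplus\cup\gminus}\kappa u_\tau^2 + 2\int_{\gplus\cup\gminus}\alpha u_\tau^2 = \|\nabla u\|_2^2 + \int_{\gplus\cup\gminus}(2\alpha+\kappa)u_\tau^2,
\end{equation*}
and inserting this together with the time-derivative term and the buoyancy term produces \eqref{energy-balance}.
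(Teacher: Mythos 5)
Your proof is correct and follows essentially the same approach as the paper: test \eqref{navierStokes} with $u$, kill the inertial and pressure terms via incompressibility and $u\cdot n=0$, and reduce the viscous term using the Navier-slip relation $\tau\cdot\mathbb{D}u\cdot n=-\alpha u_\tau$ together with the curvature identity $n\cdot\big((u\cdot\nabla)u\big)=\kappa u_\tau^2$, which is exactly the identity \eqref{id-kappaUtau2-1} the paper proves in its appendix. The only cosmetic difference is that you symmetrize in the bulk ($\Delta u=2\nabla\cdot\mathbb{D}u$, then $2|\mathbb{D}u|^2=|\nabla u|^2+\nabla\cdot((u\cdot\nabla)u)$) before integrating by parts, whereas the paper integrates by parts once and applies the equivalent pointwise identity $n_iu_j\partial_iu_j=2u\cdot(\mathbb{D}u\,n)-n_iu_j\partial_ju_i$ directly on the boundary integrand; both routes rest on the same two boundary identities and yield \eqref{energy-balance}.
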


\begin{proof}
The balance follows by testing the Navier Stokes equations with $u$, integrating by parts and observing that
\begin{align*}
    \int_\Omega u \cdot \nabla p
    =0
\end{align*}
and
\begin{align*}
    \int_\Omega u \cdot (u\cdot \nabla ) u 
    =-\int_\Omega u\cdot (u\cdot\nabla)u
\end{align*}
by the incompressibility and boundary conditions, and
\begin{align*}
    \int_\Omega u\cdot \Delta u
    &=-\|\nabla u\|_2^2 +\int_{\partial \Omega} n_i u_j\partial_i u_j
    \\
    &= -\|\nabla u\|_2^2 +2 \int_{\gamma^+\cup \gamma^-} u\cdot(\mathbb{D}u\ n) -\int_{\gamma^+\cup \gamma^-} n_iu_j \partial_j u_i\\
    &= -\|\nabla u\|_2^2 - 2\int_{\gamma^+\cup \gamma^-} \alpha u_{\tau}^2 -\int_{\gamma^+\cup \gamma^-} n\cdot (u\cdot \nabla) u
    \\
    &= -\|\nabla u\|_2^2 - \int_{\gamma^+\cup \gamma^-} (2\alpha+\kappa) u_\tau^2\,.
\end{align*}
In this identity we used the algebraic identity

\begin{equation*}
    n_iu_j\partial_iu_j=2u\cdot(\mathbb{D}u \,n)-n_iu_j\partial_ju_i,    
\end{equation*}
the Navier slip boundary conditions to deduce
\begin{equation*}
    u\cdot(\mathbb{D}u \,n)+\alpha u_{\tau}^2=0  
\end{equation*}
and the equality
\begin{equation}
    \label{id-kappaUtau2-1}
    n\cdot(u\cdot \nabla)u=\kappa u_{\tau}^2\,,
\end{equation}
proved in \eqref{appendix-proof-id-kappaUtau2} in the Appendix.

\end{proof}

Under a smallness assumption on $\kappa$, the second and third term on the left-hand side of \eqref{energy-balance} are positive and bounded from below by the $H^1$-norm of $u$ even though $(2\alpha+\kappa)$ might be negative on some parts of the boundary. This will be essential in what follows, especially in order to prove the energy decay and the main theorem.

\begin{lemma}
\label{lemma-H1-bounded-by-grad-and-bdry-terms}
Assume $\alpha \geq \underline{\alpha}$ almost everywhere on $\gamma^-\cup\gamma^+$ for some constant $\underline{\alpha}>0$ and $\kappa$ satisfies
\begin{align}
    \label{energy_condition_explicite_kappa}
    |\kappa| \leq 2\alpha + \frac{1}{4\sqrt{1+(h')^2}} \min\left\lbrace 1, \sqrt{\alpha} \right\rbrace
\end{align}
almost everywhere on $\gamma^-\cup\gamma^+$. Then
\begin{align}
    \label{H1-bounded-by-grad-and-bdry-terms}
    \frac{3}{4}\|\nabla u\|_2^2 + \int_{\gamma^-\cup\gamma^+} (2\alpha+\kappa) u_\tau^2 \geq \frac{1}{4}\min\lbrace 1, \underline{\alpha}\rbrace \|u\|_{H^1}^2.
\end{align}
\end{lemma}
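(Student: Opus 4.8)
The plan is to control the tangential velocity on the boundary by the full $H^1$-norm of $u$ via a trace inequality, then absorb the (possibly negative) contributions of $\kappa$ into the positive terms using the pointwise condition \eqref{energy_condition_explicite_kappa}. First I would recall a trace estimate of the form $\int_{\gamma^-\cup\gamma^+} u_\tau^2 \le C_{\mathrm{tr}}\|u\|_{L^2}\|u\|_{H^1} \le C_{\mathrm{tr}}\|u\|_{H^1}^2$, where $C_{\mathrm{tr}}$ depends only on $\|h'\|_\infty$ and $|\Omega|$; in fact, since $\tau=(-n_2,n_1)$ is the unit tangent, $|u_\tau|\le |u|$, so a standard trace theorem for the rough-but-Lipschitz domain $\Omega$ suffices. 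The constant in \eqref{H1-bounded-by-grad-and-bdry-terms} will then be chosen so that whatever slack is needed to run this trace bound is available. Note one subtlety: the right-hand side wants $\|u\|_{H^1}^2 = \|u\|_{L^2}^2 + \|\nabla u\|_{L^2}^2$, so I must produce an $L^2$-bound on $u$ as well; the natural mechanism is Poincar\'e, but with Navier-slip $u$ need not vanish on the boundary, so instead I would exploit that a lower bound on $\alpha$ gives genuine control of $u_\tau$ on the boundary, and combine the boundary $L^2$-norm of $u_\tau$ with $\|\nabla u\|_{L^2}$ through a Poincar\'e-trace inequality $\|u\|_{L^2}^2 \lesssim \|\nabla u\|_{L^2}^2 + \|u\|_{L^2(\gamma^-\cup\gamma^+)}^2$ valid on bounded Lipschitz domains.

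The core algebraic step is the following. Write $2\alpha+\kappa = \alpha + (\alpha+\kappa)$ and observe that \eqref{energy_condition_explicite_kappa} gives $\alpha+\kappa \ge \alpha - |\kappa| \ge -\,\alpha - \tfrac{1}{4\sqrt{1+(h')^2}}\min\{1,\sqrt\alpha\}$ as well as $2\alpha+\kappa \ge \tfrac{1}{4\sqrt{1+(h')^2}}\min\{1,\sqrt\alpha\} \ge 0$; more usefully, from $\kappa \ge -2\alpha - \tfrac{1}{4\sqrt{1+(h')^2}}\min\{1,\sqrt\alpha\}$ we only get nonnegativity, so instead I want a quantitative lower bound. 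The right way is: $2\alpha+\kappa \ge 2\alpha - |\kappa| \ge -\tfrac{1}{4\sqrt{1+(h')^2}}\min\{1,\sqrt\alpha\}$ is useless; rather use $|\kappa|\le 2\alpha + \varepsilon$ with $\varepsilon = \tfrac{1}{4\sqrt{1+(h')^2}}\min\{1,\sqrt\alpha\}$ to conclude $2\alpha+\kappa\ge -\varepsilon$ can be negative — so the condition as stated does \emph{not} make $2\alpha+\kappa$ pointwise nonnegative in general, which means the boundary integral must genuinely be combined with $\|\nabla u\|_{L^2}^2$. The mechanism: $\int(2\alpha+\kappa)u_\tau^2 \ge -\varepsilon\int u_\tau^2$ only where $2\alpha+\kappa<0$, but there $2\alpha<|\kappa|$ forces $\alpha$ small, hence $\varepsilon\le \tfrac14\sqrt\alpha\le\tfrac14\sqrt{|\kappa|/2}$; this self-improving smallness is exactly what is designed to be absorbed. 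So I would split the boundary into $\{2\alpha+\kappa\ge 0\}$ and its complement, drop the nonnegative part on the good set, and on the bad set bound $-\int_{\text{bad}}\varepsilon u_\tau^2 \ge -\tfrac14\int u_\tau^2 \ge -\tfrac14 C_{\mathrm{tr}}\|u\|_{L^2}\|u\|_{H^1}$, which after Young's inequality is absorbed into $\tfrac14\|\nabla u\|_{L^2}^2$ plus a controlled multiple of $\|u\|_{L^2}^2$; the latter is then re-absorbed via the lower bound $\alpha\ge\underline\alpha$ on the good set, where $2\alpha+\kappa\ge\alpha\ge\underline\alpha$ actually holds (since there $\kappa\ge-2\alpha+ \text{something}$... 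I should double-check: on $\{2\alpha+\kappa\ge0\}$ we only know $2\alpha+\kappa\ge0$, not $\ge\alpha$).

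Given that ambiguity, the cleaner route I would actually pursue is: everywhere $2\alpha+\kappa \ge 2\alpha - |\kappa|$, and splitting by whether $|\kappa|\le\alpha$ or $|\kappa|>\alpha$. Where $|\kappa|\le\alpha$: $2\alpha+\kappa\ge\alpha\ge\underline\alpha$. Where $|\kappa|>\alpha$: then $\alpha<|\kappa|\le 2\alpha+\tfrac14\sqrt\alpha$ (using $\sqrt\alpha\le1$ WLOG in this regime if $\alpha\le1$, else handled by the $\min\{1,\sqrt\alpha\}=1$ branch), so $|\kappa|-2\alpha\le\tfrac14\sqrt\alpha$ and $2\alpha+\kappa\ge-(|\kappa|-2\alpha)\ge-\tfrac14\sqrt\alpha \ge -\tfrac14$; combined with the trace bound $\int_{\{|\kappa|>\alpha\}}u_\tau^2\le C_{\mathrm{tr}}\|u\|_{L^2}\|u\|_{H^1}$ and Young's inequality, this contributes at worst $-\tfrac18\|\nabla u\|_{L^2}^2 - c\|u\|_{L^2}^2$. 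Adding $\tfrac34\|\nabla u\|_{L^2}^2$ leaves $\tfrac58\|\nabla u\|_{L^2}^2 - c\|u\|_{L^2}^2 + \underline\alpha\int_{\{|\kappa|\le\alpha\}}u_\tau^2$, and I finish by using the Poincar\'e-trace inequality together with the fact that on $\{|\kappa|>\alpha\}$ one also has a small positive lower bound when $2\alpha+\kappa>0$, to bound $\|u\|_{L^2}^2$ and close with the stated constant $\tfrac14\min\{1,\underline\alpha\}$.

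The main obstacle I anticipate is precisely this bookkeeping: making sure that the negative contributions, after being estimated by the trace inequality and Young's inequality, are genuinely dominated — one needs the $\tfrac14$-type numerical coefficients in \eqref{energy_condition_explicite_kappa} to be small enough relative to $C_{\mathrm{tr}}$ (which depends only on $\|h'\|_\infty$), so I would need to verify that the trace constant enters multiplicatively in a way that the explicit $\tfrac{1}{4\sqrt{1+(h')^2}}$ factor compensates. The $\min\{1,\sqrt\alpha\}$ structure is the delicate point: it encodes that for large $\alpha$ the slack is $O(1)$ (absorbed into $\|u\|_{H^1}^2$ with the $\underline\alpha$ factor eventually) while for small $\alpha$ the slack shrinks like $\sqrt\alpha$ so that $\sqrt\alpha\cdot\|u\|_{H^1}^2$-type errors are dominated by the $\underline\alpha$ (not $\sqrt{\underline\alpha}$) coefficient on the good boundary set — this matching of powers of $\alpha$ is the crux and where I would spend the most care. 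Everything else (trace theorem, Poincar\'e for Lipschitz domains, Young's inequality) is standard and depends only on $\|h'\|_\infty$ and $|\Omega|$ as claimed.
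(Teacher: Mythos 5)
There is a genuine gap. Your plan absorbs the negative boundary contribution through a generic trace inequality plus Young, and then tries to recover the $\|u\|_{L^2}^2$ part of $\|u\|_{H^1}^2$ from a Poincar\'e--trace inequality whose boundary data live only on the ``good'' set $\{|\kappa|\le\alpha\}$. Both halves of this scheme fail in the regime the hypothesis \eqref{energy_condition_explicite_kappa} is designed for. First, the good set can be empty or arbitrarily small: the hypothesis allows, say, $|\kappa|\in(\alpha,2\alpha]$ on most of the boundary and $|\kappa|=2\alpha+\tfrac18\sqrt{\alpha}/\sqrt{1+(h')^2}$ on a piece of $\gamma^+$, so $2\alpha+\kappa<0$ there while $\{|\kappa|\le\alpha\}=\emptyset$; a Poincar\'e--trace inequality with data on a subset of the boundary carries a constant depending on that subset, not only on $\|h'\|_\infty$ and $|\Omega|$, so your $L^2$-recovery step is not available with controlled constants. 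Second, there is a scale mismatch you cannot fix by bookkeeping: where $2\alpha+\kappa<0$ its magnitude can be of order $\sqrt{\alpha}$, which for small $\alpha$ dominates every positive coefficient of order $\underline{\alpha}$ available anywhere on the boundary, so no comparison of boundary integrals via a generic trace constant can absorb it — the compensation must pass through the gradient in the bulk, and through a specific structural fact your sketch never invokes.

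That fact is the antisymmetry of the curvature between the two boundaries: since $\gamma^\pm$ are graphs of the same $h$, one has $\kappa_-=-\kappa_+$ at each $y_1$, hence $(2\alpha_-+\kappa_-)+(2\alpha_++\kappa_+)=2(\alpha_-+\alpha_+)\ge 4\underline{\alpha}$ pointwise in $y_1$: whenever one boundary point is ``bad'', its partner on the opposite boundary at the same $y_1$ is strongly good. The paper's proof exploits exactly this pairing: it uses the fundamental theorem of calculus in the vertical direction to transfer $u_\tau^2$ from the bad boundary to the good one at the same $y_1$, at a cost of $\tfrac{5}{16}\|\partial_2 u\|_{L^2(\gamma^-,\gamma^+)}^2$ per column, and the calibrated slack $\tfrac{1}{4\sqrt{1+(h')^2}}\min\{1,\sqrt{\alpha}\}$ is precisely what makes that cost admissible (the $\sqrt{\alpha}$ comes from the quadratic Young term $(\kappa_--2\alpha_+)^2/(\alpha_-+\alpha_+)$, and the $\sqrt{1+(h')^2}$ cancels the surface measure). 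The same column-wise FTC argument also supplies the interior $L^2$-control that yields the full $H^1$-norm with the explicit constant $\tfrac14\min\{1,\underline{\alpha}\}$, which your sketch acknowledges but never verifies. Without using $\kappa_-=-\kappa_+$ and the vertical transfer, the inequality as stated cannot be reached by trace/Poincar\'e arguments alone.
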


\begin{proof}
In this proof we use the notation 
\begin{equation}
    \label{defKappaAlphaUpm}
    \begin{aligned}
        \kappa_-&=\kappa(y_1,h(y_1)), & \alpha_-&=\alpha(y_1,h(y_1)), & u_-&=u_\tau(y_1,h(y_1)),
        \\
        \kappa_+&=\kappa(y_1,1+h(y_1)), & \alpha_+&=\alpha(y_1,1+h(y_1)), & u_+&=u_\tau(y_1,1+h(y_1)),
    \end{aligned}
\end{equation}
which is the evaluation of the functions on the bottom or top boundary. Notice that, because of the symmetry of the domain, $\kappa_-=-\kappa_+$.

The idea of the proof is that if $\kappa_-$ is negative for some $y_1$, then $\kappa_+$ is positive and we can compensate by the fundamental theorem of calculus.

By the fundamental theorem of calculus, Young's and Hölder's inequality
\begin{equation}
    \label{fundamental-theorem-calc-estimate-for-u_minus}
    \begin{aligned}
        |u(y_1,y_2)|^2&\leq \left(u_- +\int_{h(y_1)}^{y_2} \partial_2 u(y_1,z) \ dz\right)^2
        \\
        &\leq (1+\epsilon) u_-^2 + (1+\epsilon^{-1}) \left(\int_{h(y_1)}^{y_2} \partial_2 u(y_1,z) \ dz\right)^2
        \\
        &\leq (1+\epsilon) u_-^2 + (1+\epsilon^{-1}) (y_2-h(y_1)) \|\partial_2 u \|_{L^2(\gamma^-,\gamma^+)}^2
    \end{aligned}
\end{equation}
for all $\epsilon>0$, where $\|\partial_2 u\|_{L^2(\gamma^-,\gamma^+)}^2=\int_{h(y_1)}^{1+h(y_1)}|\partial_2u(y_1,y_2)|^2 dy_2$ and analogously
\begin{align}
    \label{fundamental-theorem-calc-estimate-for-u_plus}
    |u(y_1,y_2)|^2
    &\leq (1+\epsilon) u_+^2 + (1+\epsilon^{-1}) (1+h(y_1)-y_2) \|\partial_2 u \|_{L^2(\gamma^-,\gamma^+)}^2.
\end{align}
Integrating \eqref{fundamental-theorem-calc-estimate-for-u_minus} and \eqref{fundamental-theorem-calc-estimate-for-u_plus} in $y_2$ one gets
\begin{equation}
    \label{u-estimated-by-bdry-and-grad}
    \begin{aligned}
        \|u\|_{L^2(\gamma^-,\gamma^+)}^2 &\leq (1+\epsilon) \min \lbrace u_-^2,u_+^2 \rbrace +\frac{1+\epsilon^{-1}}{2} \|\partial_2 u\|_{L^2(\gamma^-,\gamma^+)}^2
        \\
        &\leq (1+\epsilon) \max\lbrace \alpha_-^{-1}, \alpha_+^{-1}, 1 \rbrace \Big( \min\lbrace \alpha_-, \alpha_+\rbrace \sqrt{1+(h')^2}\min\lbrace u_-^2,u_+^2 \rbrace 
        \\
        &\qquad\qquad\qquad  + (2\epsilon)^{-1} \|\partial_2 u\|_{L^2(\gamma^-,\gamma^+)}^2\Big),
    \end{aligned}
\end{equation}
where in the last inequality we have smuggled in the factor $\sqrt{1+(h')^2}>1$. Next we claim that
\begin{equation}
    \label{energy-decay-kappa-removed-estimate}
    \begin{aligned}
        \min\lbrace\alpha_-,\alpha_+\rbrace \sqrt{1+(h')^2}u_{i}^2
        &
        \leq \frac{5}{16}\|\partial_2 u\|_{L^2(\gamma^-,\gamma^+)}^2 + (2\alpha_-+\kappa_-)\sqrt{1+(h')^2}u_-^2
        \\
        &\qquad+ (2\alpha_++\kappa_+)\sqrt{1+(h')^2}u_+^2
    \end{aligned}
\end{equation}
holds for either $i=+$ or $i=-$. Using \eqref{energy-decay-kappa-removed-estimate}, \eqref{u-estimated-by-bdry-and-grad} turns into
\begin{align*}
    \|u\|_{L^2(\gamma^-,\gamma^+)}^2
    &\leq (1+\epsilon) \max\lbrace \alpha_-^{-1}, \alpha_+^{-1}, 1 \rbrace \bigg[(2\alpha_-+\kappa_-)\sqrt{1+(h')^2}u_-^2 
    \\
    &\qquad +(2\alpha_++\kappa_+)\sqrt{1+(h')^2}u_+^2 +\left(\frac{5}{16}+(2\epsilon)^{-1}\right) \|\partial_2 u\|_{L^2(\gamma^-,\gamma^+)}^2\bigg].
\end{align*}
Integrating in $y_1$ and choosing $\epsilon=3$ yields
\begin{align*}
    \|u\|_2^2 &\leq 4 \max\lbrace \underline{\alpha}^{-1}, 1 \rbrace \left[ \int_{\gamma^-\cup\gamma^+} (2\alpha+\kappa) u_\tau^2 + \frac{1}{2}\|\partial_2 u\|_2^2\right]
\end{align*}
which implies the following bound for the $H^1$-norm
\begin{align*}
    \|u\|_{H^1}^2 &\leq 4 \max\lbrace \underline{\alpha}^{-1}, 1 \rbrace \left[ \int_{\gamma^-\cup\gamma^+} (2\alpha+\kappa) u_\tau^2 + \frac{3}{4} \|\nabla u\|_2^2\right].
\end{align*}

It is only left to show that \eqref{energy-decay-kappa-removed-estimate} holds. In order to prove the claim we distinguish between two cases.
\begin{itemize}
    \item
    If $|\kappa| \leq 2\alpha$, then both $2\alpha_\pm+\kappa_\pm>0$ and as $\kappa_-=-\kappa_+$ either $\kappa_-$ or $\kappa_+$ is non-negative. Assume first $\kappa_-\geq 0$. Then by these observations
    \begin{align*}
        \min\lbrace&\alpha_-,\alpha_+\rbrace\sqrt{1+(h')^2}u_{i}^2 
        \\&\leq 2\alpha_- \sqrt{1+(h')^2}u_{-}^2 
        \\
        &\leq (2\alpha_- + \kappa_-) \sqrt{1+(h')^2}u_{-}^2 + (2\alpha_+ + \kappa_+) \sqrt{1+(h')^2}u_{+}^2
    \end{align*}
    The case $\kappa_+>0$ follows similar with $u_+^2$ instead of $u_-^2$.
    \item
    Now assume that $\kappa_+<0$ and $|\kappa_+|>2\alpha_+$. The case $\kappa_-<0$ and $|\kappa_-|>2\alpha_-$ follows similarly by exchanging $+$ and $-$. Using \eqref{fundamental-theorem-calc-estimate-for-u_minus} with $y_2 = 1 + h(y_1)$, respectively \eqref{fundamental-theorem-calc-estimate-for-u_plus} with $y_2 = h(y_1)$, and observing that $\kappa_-=-\kappa_+>2\alpha_+>0$ it holds
    \begin{align*}
        - (2\alpha_-+\kappa_-)&\sqrt{1+(h')^2}u_-^2 -(2\alpha_++\kappa_+)\sqrt{1+(h')^2}u_+^2
        \\
        &= - (2\alpha_-+\kappa_-)\sqrt{1+(h')^2}u_-^2 + (\kappa_--2\alpha_+)\sqrt{1+(h')^2}u_+^2
        \\
        &\leq (\kappa_--2\alpha_+)(1+\epsilon^{-1})\sqrt{1+(h')^2}\|\partial_2 u\|_{L^2(\gamma^-,\gamma^+)}^2
        \\
        &\qquad - \left[2\alpha_-+2\alpha_+-\epsilon(\kappa_--2\alpha_+)\right]\sqrt{1+(h')^2}u_-^2.
    \end{align*}
    In order for the squared bracket to be positive we choose $\epsilon = \frac{\alpha_-+\alpha_+}{\kappa_--2\alpha_+}$ to get
    \begin{align*}
        - (2\alpha_-+\kappa_-)&\sqrt{1+(h')^2}u_-^2 -(2\alpha_++\kappa_+)\sqrt{1+(h')^2}u_+^2
        \\
        &\leq \left(\kappa_--2\alpha_+ + \frac{(\kappa_- - 2 \alpha_+)^2}{\alpha_- + \alpha_+}\right)\sqrt{1+(h')^2}\|\partial_2 u\|_{L^2(\gamma^-,\gamma^+)}^2
        \\
        &\qquad - \left[\alpha_-+\alpha_+\right]\sqrt{1+(h')^2}u_-^2.
    \end{align*}
    Then, as the smallness condition \eqref{energy_condition_explicite_kappa} implies
    \begin{align*}
        \kappa_- &= |\kappa_+| \leq 2\alpha_+ + \frac{1}{4\sqrt{1+(h')^2}} \min\left\lbrace 1, \sqrt{\alpha_+} \right\rbrace
        \\
        &\leq 2\alpha_+ + \frac{1}{4} \min\left\lbrace \frac{1}{\sqrt{1+(h')^2}}, \frac{\sqrt{\alpha_++\alpha_-}}{(1+(h')^2)^\frac{1}{4}} \right\rbrace,
    \end{align*}
    we get \vspace{-5pt}    \begin{align*}
        - (2\alpha_-+\kappa_-)&\sqrt{1+(h')^2}u_-^2 -(2\alpha_++\kappa_+)\sqrt{1+(h')^2}u_+^2
        \\
        &\leq  \frac{5}{16}\|\partial_2 u\|_{L^2(\gamma^-,\gamma^+)}^2 - \left[\alpha_-+\alpha_+\right]\sqrt{1+(h')^2}u_-^2,
    \end{align*}
    proving the claim.
\end{itemize}
\vspace{-5pt}
\end{proof}

\begin{remark}
Note that this Lemma can be improved: In fact for every $\kappa$ and $\alpha$ with $\kappa_\mp < \sqrt{\frac{2\alpha_\mp+2\alpha_\pm}{\sqrt{1+(h')^2}}+(2\alpha_\pm)^2}$,
where we use the notation \eqref{defKappaAlphaUpm}, it holds $\|\nabla u\|_2^2+\int (2\alpha+\kappa)u_\tau^2 > 0$. This implies energy decay in \eqref{energy-balance}.
\newline
Nevertheless, in order to simplify the estimates and improve readability we will work with assumption \eqref{energy_condition_explicite_kappa} instead. This choice will have no effects in terms of optimality of the bounds for the Nusselt number.
\end{remark}

We will use the energy balance and \eqref{H1-bounded-by-grad-and-bdry-terms} to prove the following decay estimate for the energy of $u$:
\begin{lemma}[Energy Decay]
\label{lemmaEnergyDecay}
Let the assumptions of Lemma \ref{lemma-H1-bounded-by-grad-and-bdry-terms} be satisfied. Then the energy of $u$ is bounded by
\begin{align}
    \label{Energy-decay}
    \|u\|_2^2 \leq e^{-\frac{1}{4}\min\lbrace 1, \underline{\alpha}\rbrace \Pra\phantom{.}  t}\|u_0\|_2^2 + 256 \max\left\lbrace 1, \underline{\alpha}^{-2}\right\rbrace|\Omega|\Ra^2.
\end{align}
\end{lemma}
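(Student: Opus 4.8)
The plan is to turn the energy balance \eqref{energy-balance} into a linear differential inequality for $\|u\|_2^2$ and then integrate it. Since $u$ is a strong solution, \eqref{energy-balance} is available, and by the maximum principle \eqref{maximum-principle} one has $\|T\|_2\le|\Omega|^{\frac12}$, so Cauchy--Schwarz gives $\Ra\int_\Omega Tu_2\le \Ra\,|\Omega|^{\frac12}\|u\|_2$ for the right-hand side. The one point that requires care is that the boundary integral $\int_{\gamma^-\cup\gamma^+}(2\alpha+\kappa)u_\tau^2$ appearing on the left of \eqref{energy-balance} is \emph{not} sign-definite — $\kappa$ changes sign — so it cannot simply be dropped. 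This is precisely what Lemma \ref{lemma-H1-bounded-by-grad-and-bdry-terms} is designed to handle, and since the hypotheses assumed here ($\alpha\ge\underline\alpha>0$ and \eqref{energy_condition_explicite_kappa}) are exactly its hypotheses, I may invoke \eqref{H1-bounded-by-grad-and-bdry-terms} directly.

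Concretely, I would split the dissipative part of the left-hand side as
\begin{equation*}
    \|\nabla u\|_2^2 + \int_{\gamma^-\cup\gamma^+}(2\alpha+\kappa)u_\tau^2 = \tfrac14\|\nabla u\|_2^2 + \Bigl(\tfrac34\|\nabla u\|_2^2 + \int_{\gamma^-\cup\gamma^+}(2\alpha+\kappa)u_\tau^2\Bigr),
\end{equation*}
apply \eqref{H1-bounded-by-grad-and-bdry-terms} to the bracketed term, use $\|u\|_{H^1}^2\ge\|u\|_2^2$, and discard the remaining $\tfrac14\|\nabla u\|_2^2\ge0$. Then \eqref{energy-balance} yields
\begin{equation*}
    \frac{1}{2\Pra}\frac{d}{dt}\|u\|_2^2 + \tfrac14\min\{1,\underline\alpha\}\,\|u\|_2^2 \le \Ra\,|\Omega|^{\frac12}\,\|u\|_2 ,
\end{equation*}
and absorbing the right-hand side by Young's inequality, $\Ra|\Omega|^{\frac12}\|u\|_2\le \tfrac18\min\{1,\underline\alpha\}\|u\|_2^2 + C\max\{1,\underline\alpha^{-1}\}\Ra^2|\Omega|$, gives a differential inequality of the form $\frac{d}{dt}\|u\|_2^2 \le -\tfrac14\min\{1,\underline\alpha\}\Pra\,\|u\|_2^2 + C\max\{1,\underline\alpha^{-1}\}\Pra\,\Ra^2|\Omega|$.

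Finally I would integrate this scalar ODE inequality by Grönwall (Duhamel), noting that the forcing constant is divided by the decay rate $\tfrac14\min\{1,\underline\alpha\}\Pra$ so that $\Pra$ cancels in the steady term; tracking the numerical constants then produces exactly \eqref{Energy-decay}, with the stated $256\max\{1,\underline\alpha^{-2}\}|\Omega|\Ra^2$ (the exact prefactor being a matter of how wastefully one applies Young's inequality). I do not expect a real obstacle here: the genuinely delicate estimate — coercivity of the dissipation despite the sign-changing boundary term — has already been carried out in Lemma \ref{lemma-H1-bounded-by-grad-and-bdry-terms}, so the remaining work is the routine Cauchy--Schwarz / Young / Grönwall assembly and bookkeeping of constants.
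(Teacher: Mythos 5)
Your proposal is correct and follows essentially the same route as the paper: both estimate the buoyancy term $\Ra\int_\Omega Tu_2$ via the maximum principle and Young's inequality, invoke Lemma \ref{lemma-H1-bounded-by-grad-and-bdry-terms} to obtain coercivity of the dissipation in terms of $\|u\|_2^2$ despite the sign-changing boundary term, and close with Gr\"onwall. Your bookkeeping even yields a slightly smaller prefactor than $256\max\{1,\underline{\alpha}^{-2}\}$, which of course still implies \eqref{Energy-decay}.
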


\begin{proof}
By the energy balance \eqref{energy-balance}, Young's inequality and the maximum principle \eqref{maximum-principle}, we obtain
\begin{align}
    \label{energy-estimate-derivative}
    \frac{1}{2 \Pra} \frac{d}{dt}\|u\|_2^2 
    &\leq -\|\nabla u\|_2^2 - \int_{\gamma^+\cup \gamma^-} (2\alpha+\kappa) u_\tau^2 + \epsilon \|u_2\|_2^2 + \frac{4}{\epsilon}|\Omega|\Ra^2\,.
\end{align}
Plugging \eqref{H1-bounded-by-grad-and-bdry-terms} into \eqref{energy-estimate-derivative}, we find
\begin{align*}
    \frac{1}{2 \Pra} \frac{d}{dt}\|u\|_2^2
    &\leq -\left(\frac{1}{4}\min\lbrace 1, \underline{\alpha}\rbrace -\epsilon\right) \|u\|_2^2 + \frac{4}{\epsilon}|\Omega|\Ra^2\,,
\end{align*}
and choosing $\epsilon=\frac{1}{8}\min\lbrace 1, \underline{\alpha}\rbrace$ yields
\begin{align*}
    \frac{d}{dt}\|u\|_2^2 
    &\leq -\frac{1}{4}\min\lbrace 1, \underline{\alpha}\rbrace \Pra  \|u\|_2^2 + 64 \Pra \max \lbrace 1, \underline{\alpha}^{-1}\rbrace |\Omega|\Ra^2\,.
\end{align*}
Applying Gr\"onwall's inequality we obtain \eqref{Energy-decay}.
\end{proof}
Taking the long-time average of the energy balance \eqref{energy-balance}, using the fact that
    $$\limsup_{t\to\infty}\frac{1}{t}\int_0^t \frac{d}{dt}\|u\|_2^2 = \limsup_{t\to\infty}\frac{1}{t}\left(\|u\|_2^2-\|u_0\|_2^2\right)=0$$
thanks to the uniform bound \eqref{Energy-decay} one gets
\begin{align*}
    \langle |\nabla u|^2\rangle + \langle (2\alpha+\kappa) u_\tau^2\rangle_{\gamma^-\cup\gamma^+}= \Ra \langle T u_2 \rangle
\end{align*}
and observing that, by \eqref{nusselt-ineq}, 
\begin{equation*}
     \Ra\langle T u_2\rangle
    =  \Ra(\langle u_2T-\partial_2 T\rangle -1)
    \leq \Ra\left((1+\max h - \min h) \Nu-1\right),
\end{equation*}
we deduce the following
\begin{corollary}
\begin{align}
    \label{average-energy-balance}
    \langle|\nabla u|^2\rangle + \langle (2\alpha+\kappa) u_\tau^2\rangle_{\gamma^+\cup \gamma^-}\leq \Ra\left((1+\max h - \min h) \Nu-1\right).
\end{align}
\end{corollary}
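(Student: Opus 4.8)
The plan is to read off \eqref{average-energy-balance} as the long-time average of the energy balance \eqref{energy-balance}. First I would integrate \eqref{energy-balance} over $[0,t]$ and divide by $t$; the only term needing attention is the one produced by the time derivative, which becomes $\frac{1}{2\Pra t}\bigl(\|u(t)\|_2^2-\|u_0\|_2^2\bigr)$. This tends to $0$ as $t\to\infty$ thanks to the energy decay Lemma \ref{lemmaEnergyDecay}, which bounds $\|u(t)\|_2^2$ uniformly in $t\ge 0$ (by $\|u_0\|_2^2+256\max\{1,\underline{\alpha}^{-2}\}|\Omega|\Ra^2$), so that dividing by $t$ and letting $t\to\infty$ kills it. Taking $\limsup_{t\to\infty}$ in the time-averaged identity then gives
\begin{equation*}
    \langle|\nabla u|^2\rangle + \langle(2\alpha+\kappa)u_\tau^2\rangle_{\gamma^+\cup\gamma^-} = \Ra\,\langle Tu_2\rangle.
\end{equation*}

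Next I would rewrite the buoyancy work $\Ra\,\langle Tu_2\rangle$ using the temperature boundary conditions. By the fundamental theorem of calculus in the vertical variable and \eqref{BC-T}, $\int_\Omega\partial_2 T\,dy=\int_0^\Gamma\bigl(T|_{\gamma^+}-T|_{\gamma^-}\bigr)\,dy_1=-\Gamma=-|\Omega|$, so $\langle\partial_2 T\rangle=-1$ and hence $\Ra\,\langle Tu_2\rangle=\Ra\,\langle(u_2-\partial_2)T\rangle-\Ra$. Then I would apply the Nusselt inequality \eqref{nusselt-ineq} of Proposition \ref{temp_review_prop_nusselt_identities}, namely $\langle(u_2-\partial_2)T\rangle\le(1+\max h-\min h)\,\Nu$, to conclude $\Ra\,\langle Tu_2\rangle\le\Ra\bigl((1+\max h-\min h)\,\Nu-1\bigr)$; inserting this into the averaged energy identity is exactly \eqref{average-energy-balance}.

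The hard part---insofar as there is one in such a short corollary---is justifying that the time-averaged time-derivative term vanishes, which is the sole reason the a-priori estimate \eqref{Energy-decay} is needed: without uniform-in-time control of $\|u(t)\|_2^2$, the quantity $\frac{1}{t}\|u(t)\|_2^2$ need not tend to $0$. A secondary technical point is that the two long-time averages on the left are combined additively once the time-derivative term has been removed; this is legitimate because that term converges, and, if one wants to be fully rigorous, the pointwise-in-time nonnegativity of $\frac{3}{4}\|\nabla u\|_2^2+\int_{\gamma^+\cup\gamma^-}(2\alpha+\kappa)u_\tau^2$ supplied by Lemma \ref{lemma-H1-bounded-by-grad-and-bdry-terms} (which holds under the standing hypotheses) can be used to control the splitting. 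Everything else is direct substitution.
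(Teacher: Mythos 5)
Your proposal is correct and follows essentially the same route as the paper: take the long-time average of \eqref{energy-balance}, use the uniform-in-time bound of Lemma \ref{lemmaEnergyDecay} to show the time-derivative term vanishes, and then bound $\Ra\langle Tu_2\rangle=\Ra(\langle (u_2-\partial_2)T\rangle-1)$ via \eqref{nusselt-ineq}. The only difference is that you make explicit two points the paper leaves implicit, namely the computation $\langle\partial_2 T\rangle=-1$ from the boundary conditions and the justification for splitting the long-time averages after the derivative term has been removed.
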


\subsection{A-priori estimate for the vorticity}
We now introduce the two-dimensional vorticity $\omega=\nabla^{\perp}\cdot u$, where $\nabla^\perp = (-\partial_2,\partial_1)$. It is easy to see that $\omega$ satisfies the equation
\begin{equation}
    \label{vorticity-equation}
    \begin{aligned}
        \frac{1}{\Pra}\left(\omega_t +(u\cdot\nabla)\omega\right)-\Delta \omega &=\Ra \partial_1 T & \textnormal{ in }&\Omega,\\
        \omega &= -2(\alpha+\kappa)u_\tau & \textnormal{ on }&\gamma^+\cup \gamma^-.
    \end{aligned}
\end{equation}
Notice that the boundary term is deduced from the following computation 
\begin{align*}
    \omega&=\omega (\tau\cdot\tau)= \omega (-\tau_1 n_2 +\tau_2 n_1) = \tau_i n_j (-\partial_i u_j+\partial_j u_i)\\
    &=2 \tau \cdot \mathbb{D}u \cdot n- 2n\cdot(\tau\cdot \nabla) u = -2(\alpha+\kappa)u_\tau ,
\end{align*}
where we used that $\tau=(-n_2,n_1)$, the boundary conditions \eqref{BC-u} and the identity 
\begin{equation}
    \label{id-kappa}
    \kappa u_{\tau}=n\cdot (\tau\cdot \nabla )u,
\end{equation}
proved in \eqref{appendix-proof-id-kappa} in the Appendix.

\begin{proposition}\label{vorticity-balance}
The following vorticity balance holds
\begin{align*}
    \frac{1}{2\Pra}\frac{d}{dt}&\|\omega\|^2 +\frac{1}{\Pra}\frac{d}{dt}\int_{\gamma^-,\gamma^+} (\alpha+\kappa)u_\tau^2 +\|\nabla \omega\|_2^2 - \Ra\int_{\Omega} \omega \partial_1 T 
    \\
    &=
    -2 \int_{\gamma^-\cup\gamma^+} (\alpha+\kappa) u \cdot \nabla p
    - \frac{2}{3\Pra}\int_{\gamma^-\cup\gamma^+} (\alpha+\kappa) u\cdot (u\cdot\nabla) u 
    \\
    &\qquad -2 \Ra \int_{\gamma^-} (\alpha+\kappa)u_\tau n_1.
\end{align*}
\end{proposition}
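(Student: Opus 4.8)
The plan is to test the vorticity equation \eqref{vorticity-equation} with $\omega$ itself and then convert the boundary terms that appear into quantities that can be read off from the momentum equation \eqref{navierStokes} restricted to $\gamma^+\cup\gamma^-$.

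First I would multiply the vorticity equation by $\omega$ and integrate over $\Omega$. The time term gives $\frac{1}{2\Pra}\frac{d}{dt}\|\omega\|_2^2$; the transport term $\frac{1}{\Pra}\int_\Omega\omega\,(u\cdot\nabla)\omega$ vanishes after writing it as $\frac{1}{2\Pra}\int_\Omega u\cdot\nabla(\omega^2)$ and integrating by parts, using $\nabla\cdot u=0$ from \eqref{divergenceFreeIncompressibilityCondition}, $u\cdot n=0$ on $\gamma^+\cup\gamma^-$, and periodicity in $e_1$; and the dissipation term, integrated by parts, produces $\|\nabla\omega\|_2^2-\int_{\gamma^+\cup\gamma^-}\omega\,(n\cdot\nabla\omega)$. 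This yields
\[
\frac{1}{2\Pra}\frac{d}{dt}\|\omega\|_2^2+\|\nabla\omega\|_2^2-\int_{\gamma^+\cup\gamma^-}\omega\,(n\cdot\nabla\omega)=\Ra\int_\Omega\omega\,\partial_1 T.
\]

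The key step is to rewrite $n\cdot\nabla\omega$ on the boundary. Since $u$ is divergence free, in two dimensions $\Delta u=\nabla^\perp\omega$; combined with $\tau=n^\perp$ a short computation gives the pointwise identity $n\cdot\nabla\omega=\tau\cdot\Delta u$ on $\gamma^+\cup\gamma^-$. Substituting $\Delta u$ from \eqref{navierStokes} yields
\[
n\cdot\nabla\omega=\tfrac{1}{\Pra}\,\tau\cdot u_t+\tfrac{1}{\Pra}\,\tau\cdot(u\cdot\nabla)u+\tau\cdot\nabla p-\Ra\,T\,(\tau\cdot e_2)\qquad\text{on } \gamma^+\cup\gamma^-.
\]
On the boundary $u=u_\tau\tau$ because $u\cdot n=0$, hence $\tau\cdot u_t=\partial_t u_\tau$, $\tau\cdot e_2=n_1$, and $u_\tau(\tau\cdot X)=u\cdot X$ for any vector field $X$; moreover the appendix identities \eqref{id-kappaUtau2-1} and \eqref{id-kappa} give $\tau\cdot(u\cdot\nabla)u=u_\tau\partial_\lambda u_\tau$, so $u_\tau\,\tau\cdot(u\cdot\nabla)u=u\cdot(u\cdot\nabla)u$. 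Plugging these in together with the Robin boundary condition $\omega=-2(\alpha+\kappa)u_\tau$ from \eqref{vorticity-equation}, and using that $\alpha$, $\kappa$ and the arc-length measure are time independent, the four contributions to $\int_{\gamma^+\cup\gamma^-}\omega\,(n\cdot\nabla\omega)$ become, in order, $-\tfrac{1}{\Pra}\frac{d}{dt}\int_{\gamma^+\cup\gamma^-}(\alpha+\kappa)u_\tau^2$, a multiple of $\tfrac{1}{\Pra}\int_{\gamma^+\cup\gamma^-}(\alpha+\kappa)\,u\cdot(u\cdot\nabla)u$ (which one may also write using $u_\tau^2\partial_\lambda u_\tau=\tfrac13\partial_\lambda(u_\tau^3)$), $-2\int_{\gamma^+\cup\gamma^-}(\alpha+\kappa)\,u\cdot\nabla p$, and a multiple of $\Ra\int_{\gamma^-}(\alpha+\kappa)u_\tau n_1$, the last because $T\equiv1$ on $\gamma^-$ and $T\equiv0$ on $\gamma^+$ by \eqref{BC-T}.

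Finally I would move the boundary term $\tfrac{1}{\Pra}\frac{d}{dt}\int(\alpha+\kappa)u_\tau^2$ to the left-hand side and keep the other three boundary integrals on the right, obtaining the asserted balance. The main obstacle is the boundary computation in the previous step: establishing $n\cdot\nabla\omega=\tau\cdot\Delta u$ and then carefully accounting for the curvature when projecting $(u\cdot\nabla)u$, $\nabla p$ and the buoyancy force onto the tangential direction on the rough walls — this is exactly where the appendix identities $n\cdot(u\cdot\nabla)u=\kappa u_\tau^2$ and $\kappa u_\tau=n\cdot(\tau\cdot\nabla)u$ are used. The interior integration by parts and the bookkeeping of the numerical constants are otherwise routine.
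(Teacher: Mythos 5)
Your plan follows exactly the paper's route: test \eqref{vorticity-equation} with $\omega$, kill the transport term by incompressibility and $u\cdot n=0$, integrate the dissipation by parts, rewrite $n\cdot\nabla\omega=\tau\cdot\Delta u$ (valid since $\Delta u=\nabla^\perp\omega$ and $\tau=n^\perp$), substitute $\Delta u$ from \eqref{navierStokes} as in \eqref{vort-balance-bdry-id-a}, and insert the Robin condition $\omega=-2(\alpha+\kappa)u_\tau$ together with $u=u_\tau\tau$ on the boundary and \eqref{BC-T}; this is correct and is essentially the paper's proof. The one place you stay vague ("a multiple of") is precisely where care is needed: carrying the computation out gives the inertial boundary term with prefactor $-\tfrac{2}{\Pra}\int_{\gamma^-\cup\gamma^+}(\alpha+\kappa)\,u\cdot(u\cdot\nabla)u$ (equivalently $-\tfrac{2}{3\Pra}\int(\alpha+\kappa)\,\partial_\lambda(u_\tau^3)$, which is where a factor $\tfrac13$ can legitimately appear, but only if the cube is written out) and the buoyancy boundary term with a plus sign, $+2\Ra\int_{\gamma^-}(\alpha+\kappa)u_\tau n_1$; these match the penultimate display of the paper's own proof, while the proposition as printed carries $\tfrac{2}{3\Pra}$ and a minus sign, a mismatch internal to the paper (apparently typographical). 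So you should pin the constants down explicitly rather than leave them as "multiples"; note that the discrepancy is harmless downstream, since in \eqref{average-enstrophy-balance} and in the estimates \eqref{Q-estimation-uuu-3} and \eqref{Q-estimation-un} these boundary terms are only ever bounded in absolute value.
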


\begin{proof}
Testing the vorticity equation with $\omega$ yields
\begin{equation}
    \label{vort-id}
    \frac{1}{2\Pra}\frac{d}{dt}\|\omega\|_2^2 = -\frac{1}{\Pra}\int_{\Omega} \omega (u\cdot \nabla)\omega + \int_{\Omega} \omega\Delta\omega +\Ra\int_{\Omega} \omega \partial_1 T
\end{equation}
Using the incompressibility condition, it is easy to see that the first term on the right-hand side vanishes since $u\cdot n=0$ at $\partial\Omega$.
In order to analyze the second term, we first notice that 
\begin{equation}
    \label{vort-balance-bdry-id-a}
    n\cdot\nabla\omega=\tau \cdot \Delta u=\frac{1}{\Pra}\tau \cdot u_t + \frac{1}{\Pra}\tau \cdot (u\cdot \nabla)u +\tau\cdot \nabla p -\Ra T n_1\,,
\end{equation}
where we used incompressibility in the first identity. Then, using the boundary conditions for the vorticity and temperature and \eqref{vort-balance-bdry-id-a}, we have 
\begin{align*}
    \int_{\Omega}\omega\Delta\omega 
    &= -\|\nabla \omega\|_2^2 +\int_{\gamma^-\cup\gamma^+} \omega n\cdot\nabla \omega\\
    &= -\|\nabla \omega\|_2^2 -2 \int_{\gamma^-\cup\gamma^+} (\alpha+\kappa)u_\tau n\cdot\nabla \omega\\
    &=-\|\nabla \omega\|_2^2 -\frac{2}{\Pra}\int_{\gamma^-\cup\gamma^+} (\alpha+\kappa)u_\tau \tau \cdot u_t - \frac{2}{\Pra}\int_{\gamma^-\cup\gamma^+} (\alpha+\kappa)u_\tau \tau \cdot (u\cdot \nabla)u \\
    &\qquad -2 \int_{\gamma^-\cup\gamma^+} (\alpha+\kappa)u_\tau \tau \cdot \nabla p + 2 \Ra \int_{\gamma^-\cup\gamma^+} (\alpha+\kappa)u_\tau T n_1\\
    &= -\|\nabla \omega\|_2^2 -\frac{1}{\Pra}\frac{d}{dt}\int_{\gamma^-\cup\gamma^+} (\alpha+\kappa)u_\tau^2 - \frac{2}{\Pra}\int_{\gamma^-\cup\gamma^+} (\alpha+\kappa) u \cdot (u\cdot \nabla)u \\
    &\qquad -2 \int_{\gamma^-\cup\gamma^+} (\alpha+\kappa) u \cdot \nabla p + 2 \Ra \int_{\gamma^-} (\alpha+\kappa)u_\tau  n_1\,.
\end{align*}
Plugging it into \eqref{vort-id} yields
\begin{align*}
    \frac{1}{2\Pra}\frac{d}{dt}\|\omega\|_2^2
    &=\int_{\Omega} \omega\Delta\omega +\Ra\int_{\Omega} \omega \partial_1 T
    \\
    &=-\|\nabla \omega\|_2^2 -\frac{1}{\Pra}\frac{d}{dt}\int_{\gamma^-\cup\gamma^+} (\alpha+\kappa)u_\tau^2 - \frac{2}{\Pra}\int_{\gamma^-\cup\gamma^+} (\alpha+\kappa) u\cdot (u\cdot\nabla) u 
    \\
    &\qquad -2 \int_{\gamma^-\cup\gamma^+} (\alpha+\kappa) u \cdot \nabla p -2 \Ra\int_{\gamma^-} (\alpha+\kappa)u_\tau n_1 +\Ra\int_{\Omega} \omega \partial_1 T\,.
\end{align*}
Let us observe that the term $\int(\alpha+\kappa) u\cdot (u\cdot\nabla) u$ is, in general, non-zero as the parameters $\alpha$ and $\kappa$ depend on the space variables.
\end{proof}

We now want to relate the $L^2$-norm of the vorticity with the $L^2$-norm of the enstrophy.

\begin{lemma}
\label{lemma_u_bounded_by_omega}
Let $2\leq q \leq p$. If $h\in W^{3,\infty}$ and $\omega\in W^{1,p}$
\begin{equation*}
    \begin{aligned}
        \|\nabla u\|_2^2 &= \|\omega\|_2^2 + \int_{\gamma^-\cup\gamma^+} \kappa u_\tau^2
        \\
        \| u \|_{W^{1,p}} &\leq C \left(\|\omega\|_p + \left(1+\|\kappa\|_\infty^{1+\frac{2}{q}-\frac{2}{p}}\right)\|u\|_q\right)
        \\
        \| u \|_{W^{2,p}} &\leq C \left(\|\nabla \omega\|_p + (1+\|\kappa\|_\infty)\| \omega\|_p +\left(1+\|\kappa\|_\infty^2 + \|\dot\kappa\|_\infty\right)\|u\|_p\right)
    \end{aligned}
\end{equation*}
holds, where the constant $C$ only depends on $p$, $|\Omega|$ and $\|h'\|_{\infty}$.
\end{lemma}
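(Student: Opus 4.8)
The starting point is the Helmholtz-type decomposition of $\nabla u$ in two dimensions. The plan is to write $\|\nabla u\|_2^2$ in terms of $\omega = \nabla^\perp\cdot u$ and a boundary term. Concretely, one computes $\int_\Omega |\nabla u|^2 = \int_\Omega |\omega|^2 + \int_{\partial\Omega}(\text{boundary terms})$ by the classical integration-by-parts identity $\int_\Omega |\nabla u|^2 - |\nabla^\perp\cdot u|^2 = \int_{\partial\Omega}(\ldots)$, using $u\cdot n = 0$ on $\gamma^-\cup\gamma^+$ together with the identity $\kappa u_\tau^2 = n\cdot(u\cdot\nabla)u$ (see \eqref{id-kappaUtau2-1}) or \eqref{id-kappa}, exactly as in the energy balance proof. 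This yields the first equation $\|\nabla u\|_2^2 = \|\omega\|_2^2 + \int_{\gamma^-\cup\gamma^+}\kappa u_\tau^2$.

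For the $W^{1,p}$ and $W^{2,p}$ estimates, I would regard $u$ as the solution of the div–curl (elliptic) system $\nabla\cdot u = 0$, $\nabla^\perp\cdot u = \omega$ in $\Omega$ with the boundary condition $u\cdot n = 0$ on $\partial\Omega$. Standard $L^p$-elliptic regularity for this (Agmon–Douglis–Nirenberg / Bogovskii-type) system on the smooth bounded domain $\Omega$ gives $\|u\|_{W^{1,p}} \lesssim \|\omega\|_p + \|u\|_{?}$ and $\|u\|_{W^{2,p}} \lesssim \|\nabla\omega\|_p + \|\omega\|_p + \|u\|_{?}$, where the lower-order term and its constant depend on the geometry — here parameterized through $h$, hence through $\|h'\|_\infty$ and $\kappa$ (with $\|h'\|_\infty$ absorbed into $C$). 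The curvature enters because flattening the boundary by the change of variables $(y_1,y_2)\mapsto(y_1,y_2-h(y_1))$ turns the constant-coefficient operators into variable-coefficient ones whose coefficients involve $h'$, and whose first-order coefficients involve $h''$, i.e. $\kappa$ (and the $W^{2,p}$ estimate picks up $h'''$, i.e. $\dot\kappa$). To make the $\kappa$-dependence of the constants explicit one tracks these coefficients through the a priori estimate, using interpolation (Gagliardo–Nirenberg) to convert intermediate norms: the exponent $1 + \frac{2}{q} - \frac{2}{p}$ in the $W^{1,p}$ bound is precisely what comes out of interpolating $\|u\|_{W^{\theta,\cdot}}$-type quantities between $\|u\|_q$ and the leading-order term and balancing powers of $\|\kappa\|_\infty$.

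The main obstacle is the careful bookkeeping of how $\|\kappa\|_\infty$ (and $\|\dot\kappa\|_\infty$) appear as powers in the constants, rather than the existence of the estimates themselves, which is classical. Two points require attention: first, one must work on the flattened domain so that all constants depend only on $\|h'\|_\infty$, $p$, $|\Omega|$, and then re-express the coefficient norms in terms of $\kappa$ via \eqref{kappa_h_relation} (noting $\|h'\|_\infty$ is already a fixed parameter, so $\|h''\|_\infty \simeq \|\kappa\|_\infty$ up to a constant depending on $\|h'\|_\infty$); second, the boundary condition $u\cdot n = 0$ transforms into $u_2 = h'(y_1) u_1$ on the flattened strip, whose trace/extension estimates contribute the $h'$-dependence but no new $\kappa$-dependence at first order. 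I expect the $W^{2,p}$ estimate to be the more delicate of the two, since differentiating the flattened system once produces source terms involving $h''$ acting on $\nabla u$ and $h'''$ acting on $u$, which must be absorbed using the already-established $W^{1,p}$ bound and an interpolation/absorption argument; the requirement $h\in W^{3,\infty}$ is exactly what makes this closed.
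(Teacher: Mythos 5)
Your argument for the identity $\|\nabla u\|_2^2=\|\omega\|_2^2+\int_{\gamma^-\cup\gamma^+}\kappa u_\tau^2$ is exactly the paper's: integrate by parts twice, use $\Delta u=\nabla^\perp\omega$ and the identity \eqref{id-kappaUtau2-2}, so nothing to add there. For the $W^{1,p}$ and $W^{2,p}$ bounds you take a related but genuinely different route: you treat $u$ directly as a solution of the div--curl system $\nabla\cdot u=0$, $\nabla^\perp\cdot u=\omega$, $u\cdot n=0$ and invoke Agmon--Douglis--Nirenberg-type $L^p$ regularity for that system, whereas the paper reduces everything to a scalar problem via the stream function $\phi$ with $\nabla^\perp\phi=u$, corrected by the affine term $(y_2-h(y_1))\frac{1}{|\Omega|}\int_\Omega u_1$ so that it vanishes on both boundary components, and then applies scalar elliptic regularity for the flattened Dirichlet problem \eqref{elliptic-regularity-pde-tilde-phi}, whose coefficients depend only on $h'$; all $\kappa$- and $\dot\kappa$-dependence enters through explicit right-hand sides and the change-of-variables estimates \eqref{changeOfVariablesDerivative}. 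Your interpolation/Young step producing the exponent $1+\frac 2q-\frac 2p$, and your mechanism for the $W^{2,p}$ bound (differentiate once, source terms with $h''$ on $\nabla u$ and $h'''$ on $u$, absorb via the $W^{1,p}$ bound), match the paper's bookkeeping. What the scalar route buys is that the only elliptic input is a Dirichlet problem with $h'$-dependent coefficients, and the missing derivative $\partial_{x_2}^3\tilde\phi$ is recovered algebraically from the equation, so no vector-valued regularity theory with geometry-dependent constants is needed.

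Two points in your plan need to be made explicit for it to close. First, on the periodic channel the div--curl system with $u\cdot n=0$ has a nontrivial kernel (the mean horizontal flow), so the ``standard'' estimate holds only modulo this finite-dimensional space; it is absorbed by the lower-order $\|u\|_q$ (resp.\ $\|u\|_p$) terms, which is precisely what the paper's correction term and the resulting source $h''\frac{1}{|\Omega|}\int_\Omega u_1$ encode, but this must be stated, since it is also where one factor of $\|\kappa\|_\infty$ originates. Second, flattening a first-order vector system is not only a change of independent variables: to retain a clean div--curl structure you must also transform the components of $u$ (a Piola-type transform), or else accept a general variable-coefficient first-order elliptic system; either way you have to verify that the $W^{1,p}$ and $W^{2,p}$ constants for the transformed system depend only on $\|h'\|_\infty$, $p$ and $|\Omega|$, with all $h''$, $h'''$ contributions isolated as lower-order terms --- otherwise the curved-domain constants silently depend on $\|h''\|_\infty\simeq\|\kappa\|_\infty$ and the point of the lemma is lost. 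These are surmountable, but the stream-function reduction handles both automatically.
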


\begin{remark}
Note that in the case of flat boundaries the estimates simplify to
\begin{align*}
    \|\nabla u\|_2^2 = \|\omega\|_2^2, \qquad \|\nabla u\|_{W^{m,p}} \leq C \|\omega\|_{W^{m,p}}
\end{align*}
as proven in Lemma 7 in \cite{drivasNguyenNobiliBoundsOnHeatFluxForRayleighBenardConvectionBetweenNavierSlipFixedTemperatureBoundaries}.
\end{remark}

\begin{proof}
\leavevmode
\begin{itemize}
    \item
    Integrating by parts twice, we find
    \begin{equation}
        \label{appendix-gradu-equals-omega+boundary-1}
        \begin{aligned}
            \|\nabla u\|_2^2 &= \int_{\gamma^-\cup\gamma^+} u \cdot (n \cdot \nabla) u - \int u \cdot \Delta u 
            \\
            &= \int_{\gamma^-\cup\gamma^+} u \cdot (n \cdot \nabla) u + \int u^\perp \cdot \nabla \omega 
            \\
            &= \int_{\gamma^-\cup\gamma^+} u \cdot (n \cdot \nabla) u - \int_{\gamma^-\cup\gamma^+} u_\tau \omega + \|\omega\|_2^2,
        \end{aligned}
    \end{equation}
    where we used the identity
     \begin{align*}
        \nabla^\perp \omega = \begin{pmatrix}\partial_2^2 u_1 -\partial_1\partial_2 u_2\\-\partial_1\partial_2 u_1 +\partial_1^2 u_2\end{pmatrix} = \Delta u,
    \end{align*}
    due to incompressibility.
    Next notice that $\tau_i\tau_j + n_in_j = \delta_{ij}$. Therefore the second boundary term of the right-hand side of \eqref{appendix-gradu-equals-omega+boundary-1} can be rewritten as
    \begin{equation}
        \label{appendix-gradu-equals-omega+boundary-2}
        \begin{aligned}
            -\int_{\gamma^-\cup\gamma^+} u_\tau \omega &= -\int_{\gamma^-\cup\gamma^+} u_\tau \tau \cdot (\tau\cdot \nabla^\perp) u - \int_{\gamma^-\cup\gamma^+} u_\tau n \cdot (n\cdot \nabla^\perp) u
            \\
            &= -\int_{\gamma^-\cup\gamma^+} u \cdot (n\cdot \nabla) u + \int_{\gamma^-\cup\gamma^+} n \cdot (u\cdot \nabla) u,
        \end{aligned}
    \end{equation}
    where in the last identity we used that $\tau\cdot \nabla^\perp = -\tau^\perp \cdot \nabla=n \cdot\nabla$ and $u_\tau n\cdot\nabla^\perp= -u_\tau\cdot n^\perp \cdot\nabla = -u_\tau \tau  \cdot\nabla=-u\cdot\nabla$.
    The first term on the right-hand side of \eqref{appendix-gradu-equals-omega+boundary-1} cancels with the first term on the right-hand side of \eqref{appendix-gradu-equals-omega+boundary-2}, implying
    \begin{align*}
        \|\nabla u\|_2^2 = \int_{\gamma^-\cup\gamma^+} n\cdot (u\cdot \nabla) u + \|\omega\|_2^2
    \end{align*}
    Finally using 
    \begin{align}
        \label{id-kappaUtau2-2}
        n\cdot (u\cdot \nabla) u = \kappa u_\tau^2
    \end{align}
    on $\gamma^-\cup\gamma^+$, which is proven in \eqref{appendix-proof-id-kappaUtau2} in the Appendix, yields the claim.
    \item 
    Let $\phi$ be the stream function of $u$, i.e. $\nabla^\perp\phi=u$, then
    \begin{align*}
        \Delta \phi &= \omega\\
        \phi\vert_{\gamma^\pm} &= \phi_\pm
    \end{align*}
    with constants $\phi_+$ and $\phi_-$ and without loss of generality set $\phi_-=0$. We can calculate $\phi_+$ by
    \begin{align*}
        \phi_+ &= \frac{1}{|\Omega|} \int_0^\Gamma \phi_+ dy_1 = \frac{1}{|\Omega|} \int_0^\Gamma \left[\phi_- + \int_{h(y_1)}^{1+h(y_1)} \partial_2 \phi\ dy_2\right] dy_1 = -\frac{1}{|\Omega|}\int_{\Omega}u_1 \ dy.
    \end{align*}
    Therefore $\bar \phi = \phi + (y_2-h(y_1))\frac{1}{|\Omega|}\int_{\Omega} u_1\ dy$ solves
    \begin{equation*}
        \begin{aligned}
            \Delta \bar\phi &= \omega - h''\frac{1}{|\Omega|}\int_{\Omega} u_1 \ dy
            \\
            \bar\phi\vert_{\gamma^\pm} &= 0.
        \end{aligned}
    \end{equation*}
    In order to flatten the boundary we introduce the change of variables
    \begin{align*}
        x=\Phi(y) = \begin{pmatrix}y_1\\y_2 - h(y_1)\end{pmatrix}, \qquad y=\Psi(x) = \begin{pmatrix}x_1\\x_2 + h(x_1)\end{pmatrix}.
    \end{align*}
    Here and in the rest of the paper $C>0$ denotes a constant that possibly depends on $\|h'\|_{\infty}$, the size of the domain $|\Omega|$ and the Sobolev exponent and may change from line to line. Note that
    \begin{align*}
        \|h'\|_\infty \leq C,\qquad \|h''\|_\infty \leq C\|\kappa\|_\infty,\qquad \|h'''\|_\infty\leq C (\|\dot\kappa\|_\infty+\|\kappa\|_\infty),
    \end{align*}
    and for $\xi(x)=\chi(\Psi(x))=\chi(y)$ one has
    \begin{equation}
        \label{changeOfVariablesDerivative}
        \begin{aligned}
            \|\nabla \xi\|_p&\leq C\|\nabla\chi\|_p
            \\
            \|\nabla^2\xi\|_p&\leq C(\|\nabla^2\chi\|_p+\|\kappa\|_\infty \|\nabla \chi\|_p)
            \\
            \|\nabla^3\xi\|_p &\leq C\left(\|\nabla^3\chi\|_p+\|\kappa\|_\infty\|\nabla^2\chi\|_p+(\|\dot\kappa\|_\infty+\|\kappa\|_\infty)\|\nabla \chi\|_p\right)
        \end{aligned}
    \end{equation}
    and analogous for the transformation in the other direction.
    Then
    \begin{equation}
        \label{elliptic-regularity-pde-tilde-phi}
        \begin{aligned}
            \tilde L\tilde \phi&=\tilde f & \textnormal{ in }&[0,\Gamma]\times [0,1]
            \\
            \tilde \phi &= 0  & \textnormal{ on }&[0,\Gamma]\times \lbrace x_2=0\rbrace \cup \lbrace x_2=1\rbrace,
        \end{aligned}
    \end{equation}
    where $\tilde L \tilde \phi=\sum_{i,j} \partial_{x_i}(\tilde a_{i,j}\partial_{x_j}\tilde \phi(x))$ with $\tilde a_{1,1}=1$, $\tilde a_{1,2}=\tilde a_{2,1}=-h'$ and $\tilde a_{2,2}=1+(h')^2$, $\tilde \phi(x)=\bar \phi(\Psi(x))$ and $\tilde f=\tilde\omega-h''\frac{1}{|\Omega|}\int_{\Omega} u_1 dy$ with $\tilde \omega (x) = \omega(\Psi(x))$. As this operator is elliptic we get
    \begin{align}
        \label{appendix-straightened-elliptic-reg-1}
        \|\tilde \phi\|_{W^{2,p}}\leq C \|\tilde f\|_p
    \end{align}
    for some constant $C>0$ depending only on $p$, $|\Omega|$ and $\|h'\|_{\infty}$. Using Hölder's inequality and the estimates for the change of variables \eqref{changeOfVariablesDerivative}, \eqref{appendix-straightened-elliptic-reg-1} becomes
    \begin{align}
        \label{elliptic-regularity-phi-W2p}
        \|\tilde\phi\|_{W^{2,p}} \leq C\|\tilde f\|_p \leq C \|\omega\|_p + C \|\kappa\|_\infty \|u\|_p.
    \end{align}
    Going back to the definition of $\bar \phi$, we find that \eqref{changeOfVariablesDerivative}, Hölder's inequality and \eqref{elliptic-regularity-phi-W2p} yield
    \begin{equation}
        \label{ellptic-regularity-gradu-by-omega}
        \begin{aligned}
            \|\nabla u\|_p &= \|\nabla^2\phi\|_p \leq \|\nabla^2\bar \phi\|_p + C\|h''\|_\infty \|u_1\|_1
            \\
            &
            \leq C \left(\|\nabla^2 \tilde \phi\|_p + \|\kappa\|_\infty \|\nabla \tilde\phi\|_p + \|h''\|_\infty \|u_1\|_1\right)
            \\
            &
            \leq C \left(\|\omega\|_p+ \|\kappa\|_\infty \|u\|_p\right),
        \end{aligned}
    \end{equation}
    implying
    \begin{align}
        \label{elliptic-regularity-u-in-Wonep-bound}
        \|u\|_{W^{1,p}} &\leq C \left(\|\omega\|_p+(1+ \|\kappa\|_\infty) \|u\|_p\right).
    \end{align}
    In order to estimate the $L^p$-norm of $u$ by the $L^q$-norm use interpolation and Young's inequality to get
    \begin{align*}
        \|u\|_p \leq C \|\nabla u\|_p^\theta \|u\|_q^{1-\theta} + C \|u\|_q \leq \epsilon \theta C \|\nabla u\|_p + C \left( (1-\theta)\epsilon^{-\frac{\theta}{1-\theta}}+1\right)\|u\|_q 
    \end{align*}
    for $\frac{1}{p}=\theta \left(\frac{1}{p}-\frac{1}{2}\right)+\frac{1-\theta}{q}$ and all $\epsilon>0$. Then choosing $\epsilon^{-1}=(1+\|\kappa\|_\infty)$ and plugging in $\theta = \frac{2(q-p)}{2(q-p)-pq}$
    \begin{align*}
        (1+\|\kappa\|_\infty) \|u\|_p &\leq C \|\nabla u\|_p + C\left(1+\|\kappa\|_\infty^{\frac{1}{1-\theta}}\right)\|u\|_q 
        \\
        &\leq C \|\nabla u\|_p + C\left(1+\|\kappa\|_\infty^{1+\frac{2}{q}-\frac{2}{p}}\right)\|u\|_q
    \end{align*}
    proving the claim.

    \item In order to prove the $W^{2,p}$ bound notice that by \eqref{elliptic-regularity-pde-tilde-phi} $\hat \phi = \partial_{x_1}\tilde \phi$ solves
    \begin{equation*}
        \begin{aligned}
            \tilde L\hat\phi&=\hat f & \textnormal{ in }&[0,\Gamma]\times [0,1]
            \\
            \hat\phi &= 0  & \textnormal{ on }& \lbrace x_2=0\rbrace \cup \lbrace x_2=1\rbrace,
        \end{aligned}
    \end{equation*}
    with $\hat f = \partial_{x_1}\tilde\omega + h'''\frac{1}{|\Omega|}\int_{\Omega} u_1 dy + \partial_{x_1}(h''\partial_{x_2}\tilde \phi)+\partial_{x_2}(h''\partial_{x_1}\tilde \phi) - 2h'h'' \partial_{x_2}^2 \tilde \phi \in L^p$. Again using elliptic regularity and Hölder's inequality we find
    \begin{equation}
        \label{elliptic-regularity-ddx1W2}
        \begin{aligned}
            \|\partial_{x_1}\tilde \phi\|_{W^{2,p}} &= \|\hat\phi\|_{W^{2,p}}\leq C \|\hat f\|_p
            \\
            &\leq C\left(\|\nabla \tilde\omega\|_p + \|\dot\kappa\|_\infty (\|u\|_p + \|\nabla \tilde\phi\|_p) + \|\kappa\|_\infty \|\nabla^2\tilde\phi\|_p\right).
        \end{aligned}
    \end{equation}
    In order to estimate the missing term $\partial_{x_2}^3\tilde \phi$ notice as $h'$ is independent of $x_2$
    \begin{equation}
        \label{elliptic-regularity-dx2hoch3}
        \begin{aligned}
            \partial_{x_2}^3\tilde \phi &= \frac{1}{1+(h')^2}\partial_{x_2} \left(\partial_{x_2}((1+(h')^2)\partial_{x_2}\tilde \phi\right) 
            \\
            &= \frac{1}{1+(h')^2}\partial_{x_2} \left(\tilde L\tilde\phi-\partial_{x_1}^2\tilde \phi +\partial_{x_1}(h'\partial_{x_2}\tilde \phi ) + \partial_{x_2} (h'\partial_{x_1}\tilde \phi)\right)
            \\
            &= \frac{1}{1+(h')^2}\partial_{x_2} \left(\tilde f - \partial_{x_1}^2 \tilde \phi + \partial_{x_1} (h'\partial_{x_2}\tilde \phi)+\partial_{x_2}(h'\partial_{x_1}\tilde \phi)\right).
        \end{aligned}
    \end{equation}
    Taking the norm in \eqref{elliptic-regularity-dx2hoch3} we find
    \begin{equation}
        \label{appendix-stream-function-ddx2hoch3}
        \begin{aligned}
            \|\partial_{x_2}^3\tilde \phi\|_{p}&\leq C \left( \|\nabla \tilde f\|_p + \|\kappa\|_\infty \|\partial_{x_2}^2\tilde\phi\|_p + \|\partial_{x_1}\tilde \phi\|_{W^{2,p}} \right).
        \end{aligned}
    \end{equation}
    Combining \eqref{elliptic-regularity-ddx1W2} and \eqref{appendix-stream-function-ddx2hoch3}
    \begin{align*}
        \|\tilde\phi\|_{W^{3,p}}\leq C \left(\|\nabla \tilde f\|_p + \|\kappa\|_\infty \|\nabla^2\tilde\phi\|_p +\|\nabla \tilde\omega\|_p + \|\dot\kappa\|_\infty (\|u\|_p + \|\nabla \tilde\phi\|_p)  \right).
    \end{align*}
    Using Hölder's inequality we find
    \begin{align*}
        \|\nabla \tilde f\|_p \leq C(\|\nabla \tilde \omega\|_p + \|\dot\kappa\|_\infty \|u\|_p),
    \end{align*}
    which together with \eqref{elliptic-regularity-phi-W2p} yields
    \begin{equation}
        \label{elliptic-regularity-phi-W3p}
        \begin{aligned}
            \|\tilde\phi\|_{W^{3,p}}&\leq C \left(\|\nabla \tilde\omega\|_p + \|\kappa\|_\infty \|\omega\|_p+ \|\kappa\|_\infty^2 \|u\|_p + \|\dot\kappa\|_\infty (\|u\|_p + \|\nabla \tilde\phi\|_p) \right)
            \\
            &\leq  C \left(\|\nabla \omega\|_p + \|\kappa\|_\infty \|\omega\|_p+ (\|\kappa\|_\infty^2 + \|\dot\kappa\|_\infty) \|u\|_p \right),
        \end{aligned}
    \end{equation}
    where in the last inequality we used
    \begin{align}
        \label{elliptic-regularity-gradphi}
        \|\nabla\tilde\omega\|_p\leq C \|\nabla\omega\|_p, \qquad \|\nabla\tilde\phi\|_p\leq C\|\nabla\bar\phi\|_p \leq C (\|\nabla\phi\|_p+\|u_1\|_1)\leq C\|u\|_p.
    \end{align}
    By the definitions and the change of variables estimate \eqref{changeOfVariablesDerivative} and Hölder's inequality one gets
    \begin{align*}
        \|\nabla^2 u\|_p &= \|\nabla^3 \phi\|_p\leq \|\nabla^3\bar \phi\|_p + \|h'''\|_\infty \frac{1}{|\Omega|}\int |u_1| \\
        &\leq C\left(\|\nabla^3 \tilde\phi\|_p + \|\kappa\|_\infty \|\nabla^2 \tilde\phi\|_p + (\|\dot\kappa\|_\infty+\|\kappa\|_\infty)\|\nabla\tilde\phi\|_p + \|\kappa\|_\infty\|u\|_p \right)
        \\
        &\leq C\left(\|\nabla \omega\|_p +\|\kappa\|_\infty\|\omega\|_{p} + (\|\kappa\|_\infty+\|\kappa\|_\infty^2+\|\dot\kappa\|_\infty) \|u\|_p\right),
    \end{align*}
    where in the last inequality we used \eqref{elliptic-regularity-phi-W3p}, \eqref{elliptic-regularity-phi-W2p} and \eqref{elliptic-regularity-gradphi}.
    Finally using the $W^{1,r}$-bound for $u$, \eqref{elliptic-regularity-u-in-Wonep-bound}, and Young's inequality yields the claim.
\end{itemize}
\end{proof}

The next result concerns a crucial $L_t^{\infty}L^p_x-$bound for the vorticity.
\begin{lemma}\label{LemmaVorticityBound}
Let $p\in (2,\infty)$ and assume that the conditions of Lemma \ref{lemma-H1-bounded-by-grad-and-bdry-terms} are satisfied. Then there exists a constant $C$ depending only on $p$, $|\Omega|$ and $\|h'\|_{\infty}$ such that
\begin{align*}
    \|\omega\|_{p} &\leq  C\left[ \|\omega_0\|_p  +\left(1+\|\alpha+\kappa\|_\infty^{\frac{2(p-1)}{p-2}}\right)\|u_0\|_2+ C_{\alpha,\kappa}\Ra\right],
\end{align*}
where $C_{\alpha,\kappa}= \left(1+\|\alpha+\kappa\|_\infty^{\frac{2(p-1)}{p-2}}\right)\max\left\lbrace 1, \underline{\alpha}^{-1}\right\rbrace$.
\end{lemma}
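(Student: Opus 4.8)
The plan is to derive a differential inequality for the functional $E(t)=\|\omega(t)\|_p^p+2^{p-1}\int_{\gamma^-\cup\gamma^+}|\alpha+\kappa|^{p-2}(\alpha+\kappa)|u_\tau|^p\,dS$ and to close it by Gr\"onwall's inequality, feeding in the a priori energy decay of Lemma~\ref{lemmaEnergyDecay} and the Biot--Savart estimates of Lemma~\ref{lemma_u_bounded_by_omega}. First I would test the vorticity equation~\eqref{vorticity-equation} with $|\omega|^{p-2}\omega$. The advection term gives $\tfrac{1}{p\Pra}\int_\Omega u\cdot\nabla|\omega|^p$, which vanishes since $\nabla\cdot u=0$ and $u\cdot n=0$ on $\gamma^-\cup\gamma^+$. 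The diffusion term produces the coercive quantity $-(p-1)\int_\Omega|\omega|^{p-2}|\nabla\omega|^2=-\tfrac{4(p-1)}{p^2}\|\nabla(|\omega|^{(p-2)/2}\omega)\|_2^2$, which together with the Poincar\'e inequality relative to the boundary trace mean and the identity $\omega=-2(\alpha+\kappa)u_\tau$ on $\gamma^-\cup\gamma^+$ bounds $\|\omega\|_p^p$ from below, up to an error of size $\|\alpha+\kappa\|_\infty^p\|u_\tau\|_{L^p(\gamma^-\cup\gamma^+)}^p$, plus the boundary flux $\int_{\gamma^-\cup\gamma^+}(n\cdot\nabla\omega)|\omega|^{p-2}\omega$. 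The buoyancy term $\Ra\int_\Omega\partial_1T\,|\omega|^{p-2}\omega=-(p-1)\Ra\int_\Omega T\,|\omega|^{p-2}\partial_1\omega$ is controlled, via the maximum principle $\|T\|_\infty\le1$ (see~\eqref{maximum-principle}) and Young's inequality, by a small multiple of $\int_\Omega|\omega|^{p-2}|\nabla\omega|^2$ plus $C\Ra^2\|\omega\|_p^{p-2}$, the latter absorbed into the coercive term at the cost of $C\Ra^p$.

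The boundary flux is the crux, and where I expect the bulk of the work to lie. Exactly as in the proof of Proposition~\ref{vorticity-balance}, incompressibility yields $n\cdot\nabla\omega=\tau\cdot\Delta u$ on $\gamma^-\cup\gamma^+$, and the momentum equation~\eqref{navierStokes} turns this into $\tau\cdot\Delta u=\tfrac1\Pra\tau\cdot u_t+\tfrac1\Pra\tau\cdot(u\cdot\nabla)u+\tau\cdot\nabla p-\Ra Tn_1$. Since $\omega=-2(\alpha+\kappa)u_\tau$ on $\gamma^-\cup\gamma^+$ and $\alpha,\kappa$ are time-independent, $\tfrac1\Pra\tau\cdot u_t=\tfrac1\Pra\partial_t u_\tau$ multiplied by $|\omega|^{p-2}\omega$ and integrated is a perfect time derivative, $-\tfrac{2^{p-1}}{p\Pra}\tfrac{d}{dt}\int_{\gamma^-\cup\gamma^+}|\alpha+\kappa|^{p-2}(\alpha+\kappa)|u_\tau|^p\,dS$; this is precisely why $E$ carries the boundary correction and why only $\|\alpha+\kappa\|_\infty$, with no derivatives, enters the final constant. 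On $\gamma^-\cup\gamma^+$ the operator $u\cdot\nabla$ is tangential because $u\cdot n=0$, which simplifies the $\tau\cdot(u\cdot\nabla)u$ contribution; the $\tau\cdot\nabla p$ contribution is handled with the pressure estimates of this section, and the $\Ra Tn_1$ contribution with the maximum principle. All of these boundary terms are then estimated by H\"older's and trace inequalities, and their top-order parts (carrying $\|\nabla\omega\|_p$ through the elliptic bounds of Lemma~\ref{lemma_u_bounded_by_omega}) are absorbed into $\|\nabla(|\omega|^{(p-2)/2}\omega)\|_2^2$; this absorption requires no smallness of $\|\alpha+\kappa\|_\infty$ --- it uses only $\alpha\ge\underline\alpha>0$ and the curvature condition~\eqref{energy_condition_explicite_kappa}, i.e.\ the hypotheses of Lemma~\ref{lemma-H1-bounded-by-grad-and-bdry-terms}.

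After these absorptions one is left with an inequality of the shape $\tfrac{d}{dt}E\le-c_1\min\{1,\underline\alpha\}\Pra\,E+c_2\Pra\,G$, where $G$ collects the $\Ra$-terms and the velocity in lower-order positions. Into $G$ I would substitute the pointwise-in-time bound $\|u(t)\|_2\lesssim\|u_0\|_2+\max\{1,\underline\alpha^{-1}\}\Ra$ furnished by Lemma~\ref{lemmaEnergyDecay}, together with $\|u\|_{W^{1,p}}\le C(\|\omega\|_p+(1+\|\kappa\|_\infty^{2-2/p})\|u\|_2)$ from Lemma~\ref{lemma_u_bounded_by_omega} (case $q=2$), and the trace--interpolation estimate $\|u_\tau\|_{L^p(\gamma^-\cup\gamma^+)}\lesssim\|u\|_{W^{1,p}}^{1/2}\|u\|_2^{1/2}$; applying Young's inequality with the conjugate pair matched to the coercive term, so that the $\|\omega\|_p$-part of $\|u\|_{W^{1,p}}$ is reabsorbed, is what generates the explicit power $\tfrac{2(p-1)}{p-2}$ of $\|\alpha+\kappa\|_\infty$. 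Gr\"onwall's inequality then yields $E(t)\lesssim e^{-ct\Pra}E(0)+\text{const}$; estimating $E(0)$ by $\|\omega_0\|_p^p$ and $\|u_0\|_2$ in the same way (using again Lemma~\ref{lemma_u_bounded_by_omega} and trace--interpolation for the initial boundary term, which keeps the coefficient of $\|\omega_0\|_p$ free of $\alpha$ and $\kappa$), and taking $p$-th roots, gives the asserted bound. The principal difficulty is the careful bookkeeping of the boundary flux and the absorption step while keeping every constant explicit in $\|\alpha+\kappa\|_\infty$ and $\underline\alpha$.
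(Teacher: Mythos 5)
Your strategy is genuinely different from the paper's: the paper never tests the vorticity equation directly, precisely to avoid the boundary flux you are wrestling with. It decomposes $\omega=\bar\omega_\pm+\tilde\omega_\pm$, where $\tilde\omega_\pm$ solves the same advection--diffusion equation with \emph{constant} boundary data $\pm\Lambda$, $\Lambda=2\|(\alpha+\kappa)u_\tau\|_{L^\infty}$, and initial data $\pm|\omega_0|$; the maximum principle gives $|\omega|\leq\max\{|\tilde\omega_-|,|\tilde\omega_+|\}$, and the shifted function $\hat\omega=\tilde\omega-\Lambda$ vanishes on $\gamma^-\cup\gamma^+$, so the $L^p$ test produces no boundary term at all, Poincar\'e applies, and Gr\"onwall gives $\|\hat\omega\|_p\lesssim\|\omega_0\|_p+\Lambda+\Ra$. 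The exponent $\tfrac{2(p-1)}{p-2}$ then comes from interpolating $\|u\|_\infty\lesssim\|\nabla u\|_p^{\theta}\|u\|_2^{1-\theta}+\|u\|_2$ in the estimate of $\Lambda$ and absorbing the $\|\omega\|_{L^\infty_tL^p_x}$ piece --- the one place where your intuition about where that power originates is essentially right, though applied to a different quantity.

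Your direct route has concrete gaps. First, the absorption step fails as described: the coercive quantity produced by testing with $|\omega|^{p-2}\omega$ is $\int_\Omega|\omega|^{p-2}|\nabla\omega|^2$, which for $p>2$ controls neither $\|\nabla\omega\|_p$ nor $\|\nabla\omega\|_2$ (the required H\"older step would need negative powers of $|\omega|$), so the ``top-order parts carrying $\|\nabla\omega\|_p$'' from the pressure and elliptic estimates cannot be absorbed into $\|\nabla(|\omega|^{(p-2)/2}\omega)\|_2^2$, and certainly not without smallness of $\|\alpha+\kappa\|_\infty$. Second, the boundary flux itself: writing $n\cdot\nabla\omega$ via the momentum equation leaves you with $\int_{\gamma^\pm}\tau\cdot\nabla p\,|\omega|^{p-2}\omega$, which needs a trace of $\nabla p$ that Proposition \ref{proposition-pressure-bound} (an $H^1$ bound) does not supply; integrating by parts tangentially instead puts the derivative on $|\omega|^{p-2}\omega=-2^{p-1}|\alpha+\kappa|^{p-2}(\alpha+\kappa)|u_\tau|^{p-2}u_\tau$ and brings in $\dot\alpha,\dot\kappa$, which do not appear in the claimed constant. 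Likewise the cubic term $\tfrac1\Pra\int_{\gamma^\pm}\tau\cdot(u\cdot\nabla)u\,|\omega|^{p-2}\omega$ carries a $\Pra^{-1}$ prefactor against an $O(1)$ dissipation, so your final constant would degenerate for small $\Pra$, whereas the lemma is uniform in $\Pra$. Third, your functional $E(t)$ contains $2^{p-1}\int|\alpha+\kappa|^{p-2}(\alpha+\kappa)|u_\tau|^p$, and under hypothesis \eqref{energy_condition_explicite_kappa} the sum $\alpha+\kappa$ may be negative on part of the boundary, so $E(t)\geq\|\omega(t)\|_p^p$ is not guaranteed and the Gr\"onwall conclusion does not directly yield the stated bound. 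These are exactly the obstructions the paper's comparison-function argument is designed to sidestep.
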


\begin{proof}
Fix an arbitrary time $\bar t>0$ and decompose the solution $\omega$ to \eqref{vorticity-equation} as
$$\omega=\bar\omega_\pm +\tilde \omega_\pm $$
where $\tilde \omega_\pm$ solves
\begin{equation*}
    \begin{aligned}
        \frac{1}{\Pra} (\partial_t \tilde\omega_\pm + u\cdot \nabla \tilde \omega_\pm)-\Delta \tilde \omega_\pm &= \Ra \partial_1 T  &\textnormal{ in } &\Omega\\
        \tilde \omega_\pm &= \pm \Lambda &\textnormal{ on } &\gamma^+\cup\gamma^-\\
        \tilde \omega_{\pm,0} &= \pm |\omega_0| &\textnormal{ in } &\Omega\,,
    \end{aligned}
\end{equation*}
with $\Lambda= 2 \|(\alpha+\kappa)u_\tau\|_{L^\infty([0,\bar t]\times\lbrace\gamma^+\cup\gamma^-\rbrace)}$ and the difference $\bar\omega_\pm = \omega -\tilde \omega_\pm$ solves
\begin{equation*}
    \begin{aligned}
        \frac{1}{\Pra} (\partial_t \bar\omega_\pm + u\cdot \nabla \bar \omega_\pm)-\Delta \bar \omega_\pm &= 0  &\textnormal{ in } &\Omega\\
        \bar \omega_\pm &= -2(\alpha+\kappa) u_\tau \mp \Lambda  &\textnormal{ on } &\gamma^+\cup\gamma^-\\
        \bar \omega_{\pm,0} &= \omega_0 \mp |\omega_0| &\textnormal{ in } &\Omega\,.
    \end{aligned}
\end{equation*}
Since the boundary and the initial values have a sign, i.e. $\bar{\omega}_+\leq 0$ on $\gplus\cup\gminus$, $\bar{\omega}_{+,0}\leq 0$ in $\Omega$ and $\bar{\omega}_-\geq 0$ on $\gplus\cup\gminus$, $\bar{\omega}_{-,0}\geq 0$ in $\Omega$, then, by the maximum principle, $\omega -\tilde \omega_+ =\bar \omega_+ \leq 0 $ and $0\leq \bar \omega_- = \omega - \tilde \omega_-$ yielding $\tilde \omega_- \leq \omega \leq \tilde \omega_+$.
In particular
\begin{align}
    |\omega| \leq \max \lbrace |\tilde\omega_-|,|\tilde \omega_+| \rbrace\,.
\end{align}
Hence, it remains to find upper bounds for $|\tilde\omega_-|$ and $|\tilde \omega_+|$. By symmetry, it suffices to show an upper bound for $\tilde \omega_+$. We divide the proof in three steps:

\medskip
\textbf{Step 1}: Omitting the indices, we define
\begin{align*}
    \hat \omega = \tilde \omega - \Lambda,
\end{align*}
then $\hat{\omega}$ satisfies
\begin{align*}
    \frac{1}{\Pra} (\partial_t \hat\omega + u\cdot \nabla \hat \omega)-\Delta \hat \omega &= \Ra \partial_1 T  &\textnormal{ in } &\Omega\\
    \hat \omega &= \Lambda-\Lambda = 0 &\textnormal{ on } &\gamma^+\cup\gamma^-\\
    \hat \omega_{0} &= |\omega_0|-\Lambda &\textnormal{ in } &\Omega\,.
\end{align*}
Testing the equation with $\hat{\omega}|\hat{\omega}|^{p-2}$ we obtain
\begin{align*}
    \frac{1}{p\Pra}\frac{d}{dt} \|\hat\omega\|_p^p 
    &= -(p-1)\int_\Omega |\nabla \hat\omega|^2 |\hat\omega|^{p-2} -\Ra \int_\Omega T  \partial_1(|\hat{\omega}|^{p-2}\hat\omega).
\end{align*}
Using $\|T\|_{\infty}=1$, Young's and Hölder's inequality, we estimate the second term of the right-hand side as
\begin{align*}
    \left|\Ra\int_\Omega T  \partial_1(|\hat{\omega}|^{p-2}\hat\omega)\right|
    \leq \frac{p-1}{2}\left(\Ra^2 |\Omega|^{\frac{2}{p}} \|\hat\omega\|_p^{p-2} + \int_\Omega |\nabla\hat\omega|^2 |\hat\omega|^{p-2}\right).  
\end{align*}
Then 
\begin{align*}
    \frac{1}{p\Pra}\frac{d}{dt} \|\hat\omega\|_p^p 
    &\leq \frac{p-1}{2}\left( \Ra^2 |\Omega|^\frac{2}{p} \|\hat\omega\|_p^{p-2} -\int_\Omega |\nabla \hat\omega|^2 |\hat\omega|^{p-2} \right)
    \\
    &=\frac{p-1}{2}\left( \Ra^2 |\Omega|^\frac{2}{p} \|\hat\omega\|_p^{p-2} -\frac{4}{p^2}\big\|\nabla |\hat\omega|^\frac{p}{2}\big\|_2^2 \right).
\end{align*}
By the Poincar\'e estimate applied to the second term of the right-hand side (remember that $\hat{\omega}$ vanishes at the boundary by definition), we obtain
  \begin{align*}
    \frac{1}{p\Pra}\frac{d}{dt} \|\hat\omega\|_p^p \leq \frac{p-1}{2} \Ra^2 |\Omega|^\frac{2}{p} \|\hat\omega\|_p^{p-2} - 2\frac{p-1}{p^2C_p^2} \|\hat\omega\|_p^p\,,
\end{align*}
where $C_p$ denotes the Poincar\'e constant.
Dividing through by $\|\hat{\omega}\|_p^{p-2}$ we obtain the inequality
\begin{align*}
    \frac{d}{dt}\|\hat\omega\|_p^2 \leq \frac{p-1}{p} \Pra \Ra^2 |\Omega|^\frac{2}{p} - 4 \Pra \frac{p-1}{p^3 C_p^2} \|\hat\omega\|_p^2.
\end{align*}
By the Gr\"onwall inequality
\begin{align*}
    \|\hat\omega\|_p^2 
    &\leq e^{- 4 \Pra \frac{p-1}{p^3 C_p^2} t} \|\hat\omega_0\|_p^2 +\frac{1}{4} C_p^2 p^2 \Ra^2 |\Omega|^\frac{2}{p}
    \leq e^{- 4 \Pra \frac{p-1}{p^3 C_p^2} t}  \|\omega_0-\Lambda\|_p^2 +\frac{1}{4} C_p^2 p^2 \Ra^2 |\Omega|^\frac{2}{p}
    \\
    &
    \leq e^{- 4 \Pra \frac{p-1}{p^3 C_p^2} t}  (\|\omega_0\|_p^2+\Lambda^2|\Omega|^{\frac 2p})+\frac{1}{4} C_p^2 p^2 \Ra^2 |\Omega|^\frac{2}{p}
    \leq C^2(\|\omega_0\|_p^2+\Lambda^2+ \Ra^2).
\end{align*}

\medskip
\textbf{Step 2:}
We now turn to the estimate for $\Lambda$. We have 
\begin{equation*}
    \Lambda = 2\|(\alpha+\kappa)u_\tau\|_{L^\infty(\gamma^-\cup\gamma^+)}
    \leq 2 \|\alpha+\kappa\|_\infty\|u\|_{L^\infty(\Omega\times[0,\bar t])},
\end{equation*}
which can be bounded using interpolation and Young's inequality by
\begin{align*}
    2&\|\alpha+\kappa\|_\infty\|u\|_{L^\infty(\Omega\times[0,\bar t])} 
    \\
    &\qquad\leq C\|\alpha+\kappa\|_\infty\|\nabla u\|_{L^\infty_t (L^p_x)}^\theta \|u\|_{L^\infty_t (L^2_x)}^{1-\theta}+C\|\alpha+\kappa\|_{\infty}\|u\|_{L^\infty_t (L^2_x)}
    \\
    &\qquad\leq \epsilon C \theta \|\nabla u\|_{L^\infty_t (L^p_x)}+ C\left[1+(1-\theta)\epsilon^{\frac{p}{2-p}} \|\alpha+\kappa\|_{\infty}^{\frac{p}{p-2}}  \right] \|\alpha+\kappa\|_{\infty}\|u\|_{L^\infty_t (L^2_x)}
\end{align*}
for $p>2$ and arbitrary $\epsilon>0$, where $\theta= \frac{p}{2(p-1)}$ and $L_t^\infty(L_x^p)=L^\infty([0,\bar t];L^p(\Omega))$. According to Lemma \ref{lemma_u_bounded_by_omega}
\begin{align*}
    \| u \|_{W^{1,p}} &\leq C \left(\|\omega\|_p + \left(1+\|\kappa\|_\infty^{2-\frac{2}{p}}\right)\|u\|_2\right)
\end{align*}
resulting in
\begin{align*}
    \Lambda &\leq \epsilon C \|\omega\|_{L_t^\infty(L_x^p)} + C\left[\epsilon\left(1+\|\kappa\|_\infty^{2-\frac{2}{p}}\right)+\|\alpha+\kappa\|_{\infty}+\epsilon^{\frac{p}{2-p}} \|\alpha+\kappa\|_{\infty}^{\frac{2(p-1)}{p-2}}  \right] \|u\|_{L^\infty_t (L^2_x)}.
\end{align*}

\medskip
\textbf{Step 3:}
Recalling that $|\omega|\leq \max \lbrace |\tilde \omega_-|,|\tilde \omega_+|\rbrace$ and $\hat\omega_+ =\tilde \omega_+ - \Lambda$ and using the results of Step 1 and Step 2 one gets
\begin{align*}
        \|\omega\|_{L_t^\infty (L_x^p)} &\leq\|\tilde \omega\|_{L_t^\infty(L_x^p)} =\|\hat \omega+\Lambda\|_{L_t^\infty(L_x^p)}
        \leq  \|\hat\omega\|_{L_t^\infty(L_x^p)} + |\Omega|^\frac{1}{p} \Lambda
        \leq C(  \|\omega_0\|_p + \Ra +\Lambda)
        \\
        &\leq \epsilon C \|\omega\|_{L_t^\infty ( L_x^p)} + C\|\omega_0\|_p + C \Ra 
        \\
        &\qquad+ C\left[\epsilon\left(1+\|\kappa\|_\infty^{2-\frac{2}{p}}\right)+\|\alpha+\kappa\|_\infty+\epsilon^{\frac{p}{2-p}} \|\alpha+\kappa\|_\infty^{\frac{2(p-1)}{p-2}}\right] \|u\|_{L^\infty_t (L^2_x)}
\end{align*}
    
Choosing $\epsilon$ small we can compensate the vorticity term on the right-hand side. By symmetry of the two boundaries $\max_{\gamma^-\cup\gamma^+} \kappa=-\min_{\gamma^-\cup\gamma^+} \kappa$, which, as $\alpha>0$, implies $\|\kappa\|_\infty\leq \|\alpha+\kappa\|_\infty$. Combining these observations we find
\begin{align*}
    \|\omega\|_{L_t^\infty(L_x^p)} \leq C \|\omega_0\|_p + C\left(1+\|\alpha+\kappa\|_\infty^{\frac{2(p-1)}{p-2}}\right)\|u\|_{L_t^\infty(L_x^2)} + C\Ra.
\end{align*}
Finally Lemma \ref{lemmaEnergyDecay} yields
\begin{align*}
    \|\omega\|_{L_t^\infty(L_x^p)} 
    &\leq C \left[ \|\omega_0\|_p + \left(1+\|\alpha+\kappa\|_\infty^{\frac{2(p-1)}{p-2}}\right)\|u_0\|_{2}+ C_{\alpha,\kappa}\Ra\right],
\end{align*}
where $C_{\alpha,\kappa}=  \left(1+\|\alpha+\kappa\|_\infty^{\frac{2(p-1)}{p-2}}\right)\max\left\lbrace 1, \underline{\alpha}^{-1}\right\rbrace $ and $C$ only depends on $|\Omega|$, $\|h'\|_{\infty}$ and $p$. As the constants are independent of $\bar t$ this bound holds universally in time.

\end{proof}

As $\Omega$ is bounded Hölder inequality and Lemma \ref{LemmaVorticityBound} yield that if $\omega_0 \in L^p$ for any $p<\infty$ then $\|\omega\|_p$ is universally bounded in time. By trace Theorem and Lemma \ref{lemma_u_bounded_by_omega}
\begin{align*}
    \|u\|_{L^2(\gamma^-\cup\gamma^+)}\leq \|u\|_{H^1} \leq C\left(\|\omega\|_2+(1+\|\kappa\|_\infty)\|u\|_2\right).
\end{align*}
Using Lemma \ref{lemmaEnergyDecay} this is also universally bounded in time, therefore taking the long time average of the vorticity balance, see Lemma \ref{vorticity-balance}, we get the following.

\begin{corollary}
Assume that the conditions of Lemma \ref{LemmaVorticityBound} are satisfied and $\omega_0\in L^p$ for some $p>2$, then
\begin{equation}
    \label{average-enstrophy-balance}
    \begin{aligned}
        0 &= \langle |\nabla \omega|^2\rangle - 2 \langle (\alpha+\kappa) u\cdot \nabla p \rangle_{\gamma^-\cup\gamma^+} - \Ra \langle \omega\partial_1 T\rangle 
        \\
        &\qquad + \frac{2}{3\Pra} \langle (\alpha+\kappa) u\cdot (u\cdot\nabla) u\rangle_{\gamma^-\cup\gamma^+} + 2\Ra \langle (\alpha+\kappa) u_\tau n_1\rangle_{\gamma^-}.
    \end{aligned}
\end{equation}
\end{corollary}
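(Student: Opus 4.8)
The plan is to derive \eqref{average-enstrophy-balance} by dividing the vorticity balance of Proposition \ref{vorticity-balance} by $|\Omega|$ and applying the long-time average $\limsup_{t\to\infty}\frac1t\int_0^t(\cdot)\,dt$. The only point that needs genuine work is that the two total-time-derivative contributions, $\frac{1}{2\Pra}\frac{d}{dt}\|\omega\|_2^2$ and $\frac{1}{\Pra}\frac{d}{dt}\int_{\gamma^-\cup\gamma^+}(\alpha+\kappa)u_\tau^2$, average to zero. Since $\frac1t\int_0^t g'(s)\,ds=\frac{g(t)-g(0)}{t}\to0$ for any $g$ that is bounded uniformly in time, it suffices to show that $t\mapsto\|\omega(t)\|_2^2$ and $t\mapsto\int_{\gamma^-\cup\gamma^+}(\alpha+\kappa)u_\tau^2$ stay bounded as $t\to\infty$.

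For the first, the hypothesis $\omega_0\in L^p$ with $p>2$ lets Lemma \ref{LemmaVorticityBound} furnish a bound on $\|\omega\|_p$ that is uniform in time, and boundedness of $\Omega$ upgrades it via Hölder's inequality to a uniform bound on $\|\omega\|_2$. For the second, I would estimate $\int_{\gamma^-\cup\gamma^+}(\alpha+\kappa)u_\tau^2\le\|\alpha+\kappa\|_\infty\|u\|_{L^2(\gamma^-\cup\gamma^+)}^2$, bound the boundary norm by the trace inequality $\|u\|_{L^2(\gamma^-\cup\gamma^+)}\le C\|u\|_{H^1}$, then use the first estimate of Lemma \ref{lemma_u_bounded_by_omega}, namely $\|u\|_{H^1}\le C(\|\omega\|_2+(1+\|\kappa\|_\infty)\|u\|_2)$, and finally the energy decay of Lemma \ref{lemmaEnergyDecay} to control $\|u\|_2$ uniformly in time; combined with the uniform bound on $\|\omega\|_2$ just obtained, this yields the desired uniform-in-time control of the boundary quadratic term.

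With both time-derivative terms eliminated under the long-time average, the remaining terms of the rescaled vorticity balance, rewritten with the bracket notation introduced earlier, are exactly \eqref{average-enstrophy-balance}; in particular $-2\Ra\int_{\gamma^-}(\alpha+\kappa)u_\tau n_1$ moves to the right-hand side and becomes $+2\Ra\langle(\alpha+\kappa)u_\tau n_1\rangle_{\gamma^-}$. One caveat is that $\limsup$ is not additive, but since the combination of the two time-derivative terms converges (to $0$), the long-time average of the sum of the remaining terms exists as a true limit, so the identity is unambiguous with the brackets read in that sense. The main --- indeed essentially the only --- obstacle is the uniform-in-time bound on $\int_{\gamma^-\cup\gamma^+}(\alpha+\kappa)u_\tau^2$, which is why the argument has to be chained through Lemma \ref{LemmaVorticityBound}, Lemma \ref{lemma_u_bounded_by_omega} and Lemma \ref{lemmaEnergyDecay}; the curvature-smallness condition \eqref{energy_condition_explicite_kappa} and the positivity $\alpha\ge\underline\alpha>0$ enter only through the hypotheses of those lemmas, and the rest is bookkeeping with the averaging brackets.
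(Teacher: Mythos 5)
Your proposal is correct and follows essentially the same route as the paper: bound $\|\omega\|_2$ uniformly in time via Lemma \ref{LemmaVorticityBound} and Hölder on the bounded domain, control the boundary term through the trace inequality, Lemma \ref{lemma_u_bounded_by_omega} and the energy decay of Lemma \ref{lemmaEnergyDecay}, and then let the time-derivative terms vanish under the long-time average of the balance in Proposition \ref{vorticity-balance}. No gaps to report.
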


\subsection{A-priori estimate for the pressure}
\leavevmode

The pressure satisfies
\begin{equation}
    \label{pressure-eq}
    \begin{aligned}
        \Delta p &= -\frac{1}{\Pra}(\nabla u)^T \colon \nabla u +\Ra \partial_2 T   & \textnormal{ in }&\Omega\\
        n\cdot \nabla p &= -\frac{1}{\Pra}\kappa u_\tau^2 +2 \tau\cdot\nabla\left((\alpha+\kappa)u_\tau\right)  & \textnormal{ on }&\gamma^+\\
        n\cdot \nabla p &= -\frac{1}{\Pra}\kappa u_\tau^2 +2 \tau\cdot\nabla\left((\alpha+\kappa)u_\tau \right) +n_2\Ra  & \textnormal{ on }&\gamma^-
    \end{aligned}
\end{equation}

The equation in the bulk is easy to obtain by applying the divergence to the Navier-Stokes equations, using incompressibility and writing compactly $\nabla\cdot((u\cdot\nabla)u)=(\nabla u)^T:\nabla u$.
In order to track the pressure at the boundary we look at Navier-Stokes equations at $\gminus\cup\gplus$ 
\begin{align*}
    \left(\frac{1}{\Pra}\left( u_t + (u\cdot \nabla)u \right) - \Delta u +  \nabla p - \Ra Te_2\right)\cdot n =0
\end{align*}
where $n$ is the normal at the boundary.
It is clear that
\begin{align*}
    n\cdot u_t &= \frac{d}{dt}(n\cdot u) = 0
\end{align*}
and
\begin{align*}
    n\cdot \Delta u= n \cdot\nabla^\perp \omega =  -2n\cdot \nabla^\perp \left((\alpha+\kappa)u_\tau\right)
    = 2 \tau \cdot \nabla \left((\alpha+\kappa)u_\tau\right),
\end{align*}
using the boundary condition for the vorticity in \eqref{vorticity-equation}.
Thanks to \eqref{appendix-proof-id-kappaUtau2} in the Appendix we also have 
\begin{align}
    \label{id-kappaUtau2-3}
    n\cdot (u\cdot \nabla ) u =\kappa u_\tau^2.
\end{align}
Hence
\begin{align*}
    n\cdot \nabla p = -\frac{\kappa}{\Pra} u_\tau^2 +2\tau \cdot \nabla ((\alpha+\kappa)u_\tau) + n_2T\Ra
\end{align*}
at the boundary. Now, it is only left to observe that $T=0$ at $\gplus$ and $T=1$ at $\gminus$.

\begin{proposition}
\label{proposition-pressure-bound}
For any $r\in (2,\infty)$ there exists a constant $C$ depending on $|\Omega|$, $r$ and $\|h'\|_{\infty}$ such that
\begin{align*}
    \| p\|_{H^1}
    \leq C\left[ \Ra\|T\|_2 + \|\alpha+\kappa\|_\infty\|u\|_{H^2}+\left(\frac{1+\|\kappa\|_\infty}{\Pra}\|u\|_{W^{1,r}}+\|\dot\alpha+\dot\kappa\|_\infty \right)\|u\|_{H^1}\right].
\end{align*}

\end{proposition}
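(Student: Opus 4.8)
\emph{Proof plan.} The plan is to read the estimate off from the variational formulation of the Neumann problem \eqref{pressure-eq}, keeping careful track of the boundary contributions (for strong solutions all quantities below are finite). Since $p$ is only determined up to an additive constant, I first normalise $\int_\Omega p\,dy=0$, so that Poincaré's inequality gives $\|p\|_{H^1}\leq C\|\nabla p\|_2$ and it suffices to bound $\|\nabla p\|_2$. Testing \eqref{pressure-eq} against an arbitrary $\varphi\in H^1(\Omega)$, integrating the volume term $\Ra\int_\Omega \partial_2 T\,\varphi$ by parts, and inserting the Neumann data, one arrives at
\begin{align*}
    \int_\Omega\nabla p\cdot\nabla\varphi ={}& \frac{1}{\Pra}\int_\Omega (\nabla u)^T\!\colon\!\nabla u\,\varphi + \Ra\int_\Omega T\,\partial_2\varphi \\
    &+ \int_{\gminus\cup\gplus}\Big(2\,\tau\cdot\nabla\big((\alpha+\kappa)u_\tau\big)-\tfrac{1}{\Pra}\kappa u_\tau^2\Big)\varphi\,dS .
\end{align*}
The only place the precise structure of \eqref{pressure-eq} is used is a cancellation: integrating $\Ra\int_\Omega\partial_2 T\,\varphi$ by parts produces a boundary term $-\Ra\int_{\partial\Omega}n_2 T\,\varphi=-\Ra\int_{\gminus}n_2\varphi$ (because $T\equiv 1$ on $\gminus$ and $T\equiv 0$ on $\gplus$), and this is exactly annihilated by the $n_2\Ra$ term in the Neumann condition on $\gminus$.

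The two volume terms are immediate. Cauchy--Schwarz gives $\big|\Ra\int_\Omega T\,\partial_2\varphi\big|\leq \Ra\|T\|_2\|\nabla\varphi\|_2$, which is the $\Ra\|T\|_2$ contribution. For the quadratic term, Hölder with exponents $\tfrac12,\tfrac1r,\tfrac12-\tfrac1r$ together with the two-dimensional embedding $H^1(\Omega)\hookrightarrow L^{2r/(r-2)}(\Omega)$ (here $r>2$ enters) yields $\tfrac1{\Pra}\big|\int_\Omega (\nabla u)^T\!\colon\!\nabla u\,\varphi\big|\leq \tfrac{C}{\Pra}\|\nabla u\|_2\|\nabla u\|_r\|\varphi\|_{H^1}\leq \tfrac{C}{\Pra}\|u\|_{H^1}\|u\|_{W^{1,r}}\|\varphi\|_{H^1}$.

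The boundary terms require more care. First, by Hölder on $\partial\Omega$, the trace inequality and the one-dimensional Sobolev embedding $H^{1/2}(\partial\Omega)\hookrightarrow L^4(\partial\Omega)$, one has $\tfrac1{\Pra}\big|\int_{\partial\Omega}\kappa u_\tau^2\varphi\big|\leq \tfrac{\|\kappa\|_\infty}{\Pra}\|u_\tau\|_{L^4(\partial\Omega)}^2\|\varphi\|_{L^2(\partial\Omega)}\leq \tfrac{C\|\kappa\|_\infty}{\Pra}\|u\|_{H^1}^2\|\varphi\|_{H^1}$; bounding one factor by $\|u\|_{H^1}\leq C\|u\|_{W^{1,r}}$ merges this with the previous term into the claimed $\tfrac{1+\|\kappa\|_\infty}{\Pra}\|u\|_{W^{1,r}}\|u\|_{H^1}$. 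For the tangential-derivative term I would deliberately \emph{not} integrate by parts along $\partial\Omega$, since that would shift a derivative onto $\varphi$, which lies only in $H^1$; instead I distribute the derivative. Along $\partial\Omega$ one has $\tau\cdot\nabla\big((\alpha+\kappa)u_\tau\big)=(\dot\alpha+\dot\kappa)u_\tau+(\alpha+\kappa)\,\dot u_\tau$, where the dot is the arc-length derivative, and $\dot u_\tau=(\tau\cdot\nabla u)\cdot\tau$ because the term $\kappa\,(u\cdot n)$ vanishes thanks to $u\cdot n=0$ on $\partial\Omega$. Hence $|\dot u_\tau|\leq|\nabla u|$ pointwise on $\partial\Omega$, and with the trace bound $\|\nabla u\|_{L^2(\partial\Omega)}\leq C\|u\|_{H^2}$ one obtains $2\big|\int_{\partial\Omega}\tau\cdot\nabla\big((\alpha+\kappa)u_\tau\big)\varphi\big|\leq C\big(\|\dot\alpha+\dot\kappa\|_\infty\|u\|_{H^1}+\|\alpha+\kappa\|_\infty\|u\|_{H^2}\big)\|\varphi\|_{H^1}$.

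Collecting the four estimates and choosing $\varphi=p$ gives $\|\nabla p\|_2^2\leq C\,B\,\|p\|_{H^1}$, where $B$ is the right-hand side of the asserted inequality; dividing through and applying Poincaré once more finishes the proof, with the constant depending only on $|\Omega|$, $\|h'\|_\infty$ and $r$ through the Poincaré, Sobolev and trace constants for this domain. The main obstacle is precisely the regularity bookkeeping in the previous paragraph: restricting to test functions in $H^1$ forces one to spread, rather than integrate by parts, the tangential-derivative boundary term, which is exactly why $\|u\|_{H^2}$ (and not a weaker norm of $u$) must appear alongside $\|\alpha+\kappa\|_\infty$. One could alternatively quote a black-box Neumann elliptic-regularity inequality $\|p\|_{H^1}\lesssim\|\Delta p\|_{H^{-1}}+\|n\cdot\nabla p\|_{H^{-1/2}(\partial\Omega)}$, but the variational route keeps the dependence of the constant explicit and handles the low regularity of $\varphi$ transparently.
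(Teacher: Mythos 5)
Your proposal is correct and follows essentially the same route as the paper: it tests the pressure Poisson problem against $p$ (your weak formulation with $\varphi=p$), exploits the cancellation of the $\Ra\, n_2$ Neumann datum on $\gminus$ against the boundary term produced by integrating $\Ra\,\partial_2 T$ by parts, distributes the tangential derivative in the boundary term $2\tau\cdot\nabla((\alpha+\kappa)u_\tau)$ and uses trace estimates to generate the $\|\alpha+\kappa\|_\infty\|u\|_{H^2}$ and $\|\dot\alpha+\dot\kappa\|_\infty\|u\|_{H^1}$ contributions, and concludes via the zero-mean normalisation, Poincar\'e, and division by $\|p\|_{H^1}$, exactly as in the paper. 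The only cosmetic deviation is your treatment of the $\frac{\kappa}{\Pra}u_\tau^2$ boundary term (boundary H\"older with the trace and $H^{1/2}(\partial\Omega)\hookrightarrow L^4(\partial\Omega)$ embedding, followed by $\|u\|_{H^1}\leq C\|u\|_{W^{1,r}}$) instead of the paper's $W^{1,1}$-trace estimate for $p\,u^2$; both yield the same factor $\frac{1+\|\kappa\|_\infty}{\Pra}\|u\|_{W^{1,r}}\|u\|_{H^1}$.
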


\begin{proof}
On one hand, integrating by parts and using the boundary conditions (for $u$ and $T$), we have 
\begin{align*}
    \int_{\Omega} p\Delta p 
    &= - \| \nabla p\|_2^2 - \frac{1}{\Pra} \int_{\gamma^-\cup\gamma^+} p\kappa u_\tau^2 +2\int_{\gamma^-\cup\gamma^+}p \tau\cdot \nabla\left((\alpha+\kappa)u_\tau\right)+\Ra\int_{\gamma^-} p n_2
\end{align*}
On the other hand using the equation satisfied by the pressure \eqref{pressure-eq}
\begin{align*}
    \int_{\Omega} p\Delta p 
     &= -\frac{1}{\Pra}\int_{\Omega} p(\nabla u)^T \colon \nabla u + \Ra \int_{\Omega} p\partial_2 T \\
     &= -\frac{1}{\Pra}\int_{\Omega} p(\nabla u)^T \colon \nabla u  + \Ra \int_{\gminus\cup\gplus} pT n_2- \Ra \int_{\Omega} T\partial_2 p\\
     &= -\frac{1}{\Pra}\int_{\Omega} p(\nabla u)^T \colon \nabla u + \Ra\int_{\gminus} pn_2- \Ra \int_{\Omega} T\partial_2 p,
\end{align*}
where we used the boundary conditions for $T$ in the last identity.
Combining these estimates one gets
\begin{align*}
     \| \nabla p\|^2 
     &= - \frac{1}{\Pra} \int_{\gamma^-\cup\gamma^+} p\kappa u_\tau^2 +2\int_{\gamma^-\cup\gamma^+}p \tau\cdot \nabla\left((\alpha+\kappa)u_\tau\right) +\frac{1}{\Pra}\int_{\Omega} p(\nabla u)^T \colon \nabla u
     \\&\qquad + \Ra \int_{\Omega} T\partial_2 p.
\end{align*}
We estimate the right-hand side:
By H\"older inequality with $\frac{1}{r}+\frac{1}{q}+\frac{1}{2}=1$ and Sobolev embedding\footnote{ Since $\Omega$ is bounded, for any $1\leq \mu<\infty$ choose $s>\max\lbrace 2,\mu \rbrace$ and $q=\frac{s}{\mu}>1$, such that $\frac{ns}{n+s}=\frac{2s}{2+s}\leq \frac{2s}{s}=2$. By H\"older and Sobolev inequality we obtain
\begin{align*}
    \|f\|_\mu \leq \|1\|_{\frac{\mu q}{q-1}}\|f\|_{\mu q} = |\Omega|^{\frac{q-1}{\mu q}}\|f\|_s =|\Omega|^{\frac{\mu s-1}{s}}\|f\|_s \leq C \|f\|_{W^{1,\frac{ns}{n+s}}} \leq C \|f\|_{H^1}.
\end{align*}
}
\begin{align}
    \label{pressure-3-term-estimate}
    \|p(\nabla u)^T\colon \nabla u\|_1 &\leq \|p\|_q \|\nabla u\|_2 \|\nabla u\|_r 
    \leq C \|p\|_{H^1} \|\nabla u\|_2 \|\nabla u\|_r,
\end{align}
where $C$ depends on $|\Omega|$, $r$ and $\|h'\|_{\infty}$. For the temperature term we apply H\"older's inequality use that $\|T\|_\infty\leq 1$ by the maximum principle \eqref{maximum-principle}
\begin{align*}
    \|T\partial_2 p\|_1\leq \|T\|_2 \|p\|_{H^1}\leq |\Omega|^\frac{1}{2} \|p\|_{H^1}\,,
\end{align*}
and for the second term we compute
\begin{align*}
    &\int_{\gamma^-\cup\gamma^+} \left|p \tau\cdot \nabla ((\alpha+\kappa)u_{\tau})\right|
    \\
    &\quad\leq \|\alpha+\kappa\|_\infty\|p\|_{L^2(\gamma^-\cup\gamma^+)} \|\nabla u\|_{L^2(\gamma^-\cup\gamma^+)} + \|\dot\alpha+\dot\kappa\|_\infty \|p\|_{L^2(\gamma^-\cup\gamma^+)}\|u\|_{L^2(\gamma^-\cup\gamma^+)}
    \\
    &\quad\leq C \|\alpha+\kappa\|_\infty \|p\|_{H^1} \|u\|_{H^2} + C \|\dot\alpha+\dot\kappa\|_\infty \|p\|_{H^1} \|u\|_{H^1},
\end{align*}
where in the last inequality we use the trace estimate.
Finally, we estimate the first term: Similar to \eqref{pressure-3-term-estimate} for $\frac{1}{r}+\frac{1}{q}+\frac{1}{2}=1$ and every $r>2$
\begin{align*}
    \left|\int_{\gamma^-\cup \gamma^+} \kappa p u_\tau^2\right| &\leq \|\kappa\|_\infty \|pu_\tau^2\|_{L^1(\gamma^-\cup\gamma^+)}\\
    &\leq C\|\kappa\|_\infty  \|pu^2\|_{W^{1,1}} \\
    &\leq C\|\kappa\|_\infty \left( \|p\|_q \|u\|_2\|u\|_r + \|\nabla p\|_2 \|u\|_r \|u\|_q + \| p \|_q \|u\|_r \|\nabla u\|_2\right)\\
    &\leq C\|\kappa\|_\infty \left( \|p\|_{H^1} \|u\|_{H^1}\| u\|_{W^{1,r}} + \|\nabla p\|_2 \|u\|_r \|u\|_{H^1} + \| p \|_{H^1} \|u\|_r \|\nabla u\|_2\right)\\
    &\leq C \|\kappa\|_\infty \|p\|_{H^1} \|u\|_{H^1}\| u\|_{W^{1,r}}\,,
\end{align*}
where $C$ depends on $|\Omega|$, $r$ and $\|h'\|_{\infty}$.

\smallskip 

Combining the estimates we find
\begin{align*}
     \| \nabla p\|_2^2 
     &\leq C \|p\|_{H^1}\left[ \Ra + \|\alpha+\kappa\|_\infty\|u\|_{H^2}+\left(\frac{1+\|\kappa\|_\infty}{\Pra}\|u\|_{W^{1,r}}+\|\dot\alpha+\dot\kappa\|_\infty \right)\|u\|_{H^1}\right].
\end{align*}

Using that the pressure $p$ is only defined up to a constant so we choose $p$ to have zero mean such that Poincaré yields $\|p\|_q\leq C \|\nabla p\|_q$ which implies $\|p\|_{H^1}^2 = \|\nabla p\|_2^2+\|p\|_2^2 \leq (1+C^2) \|\nabla p\|_2^2$. Then 
\begin{align*}
    \| p\|_{H^1}^2
    &\leq C \| p\|_{H^1}\left[ \Ra + \|\alpha+\kappa\|_\infty\|u\|_{H^2}+\left(\frac{1+\|\kappa\|_\infty}{\Pra}\|u\|_{W^{1,r}}+\|\dot\alpha+\dot\kappa\|_\infty \right)\|u\|_{H^1}\right]\,.
\end{align*}
Finally dividing by $\| p\|_{H^1}$ we conclude that there exists a constant $C>0$ depending on $\Omega$ and $r$ such that
\begin{eqnarray*}
    \| p\|_{H^1}
    \leq C\left[ \Ra + \|\alpha+\kappa\|_\infty\|u\|_{H^2}+\left(\frac{1+\|\kappa\|_\infty}{\Pra}\|u\|_{W^{1,r}}+\|\dot\alpha+\dot\kappa\|_\infty \right)\|u\|_{H^1}\right]\,.
\end{eqnarray*}
for any $r>2$.
\end{proof}

\section{Upper bounds on the Nusselt number}\label{section-four}
Combining the a-priori estimates derived in the previous section we are now able to prove the $\Ra^\frac{1}{2}$ bound, that was first derived for the flat, no slip case in 3 dimensions by Doering and Constantin \cite{DC96}.
\subsection{Proof of Theorem \ref{Lemma-Ra-One-Half-Bound}}

Let $\Omega_\delta$ be given by $$\Omega_\delta = \lbrace (y_1,y_2) \ \vert \ 0\leq y_1\leq \Gamma, 1+h(y_1)-\delta\leq y_2\leq 1+h(y_1)\rbrace$$ as illustrated in Figure \ref{fig:omega_delta}.
\begin{figure}
    \begin{center}
        \includegraphics[width=0.5\textwidth]{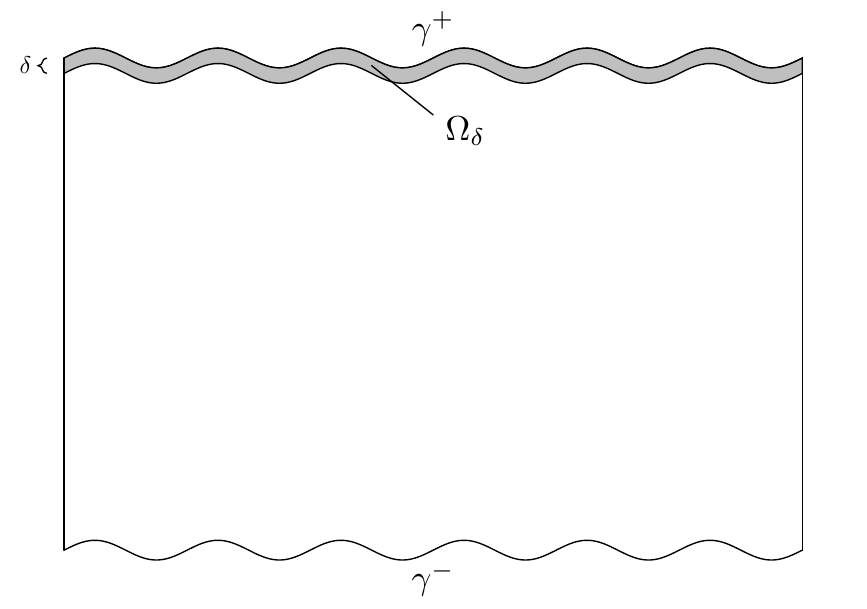}
    \end{center}
    \caption{Illustration of $\Omega_\delta$.}
    \label{fig:omega_delta}
\end{figure}
Taking the average in $z\in (1-\delta,1)$ in the representation of the Nusselt number \eqref{nusselt-strip} we find
\begin{equation}
    \label{halfBound-nusselt-on-Omega_delta}
    \begin{aligned}
        \Nu &= \limsup_{T\to\infty} \frac{1}{T}\int_0^T \frac{1}{\delta} \frac{1}{|\Omega|} \int_{1-\delta}^{1}\int_{\gamma(z)} n_+\cdot (u-\nabla)T \ dS\ dz\ dt 
        \\
        &= \limsup_{T\to\infty} \frac{1}{T}\int_0^T \frac{1}{\delta} \frac{1}{|\Omega|} \int_{\Omega_\delta} n_+\cdot u T \ dy\ dt - \limsup_{T\to\infty} \frac{1}{T}\int_0^T \frac{1}{\delta} \frac{1}{|\Omega|} \int_{\Omega_\delta} n_+\cdot \nabla T \ dy\ dt
    \end{aligned}
\end{equation}
In order to estimate the first term on the right-hand side notice that by the fundamental theorem of calculus for $(y_1,y_2)\in \Omega_\delta$
\begin{equation}
    \label{halfBound-velocity-term}
    \begin{aligned}
        |n_+\cdot u| (y_1,y_2)&= \left|n_+\cdot u\vert_{\gamma^+}+\int_{1+h(y_1)}^{y_2} \partial_2 (n_+\cdot u)\ dz\right| \leq  \int_{1+h(y_1)-\delta}^{1+h(y_1)} |\partial_2 u|\ dz 
        \\
        &\leq \delta^\frac{1}{2}\|\nabla u\|_{L^2(\gamma^-,\gamma^+)},
    \end{aligned}
\end{equation}
where $\| \nabla u\|_{L^2(\gamma^-,\gamma^+)}=\|\nabla u(y_1,\cdot)\|_{L^2(h(y_1),1+h(y_1))}$ and we used the non-penetration boundary condition for $u$ and that $n_+$ is constant in $y_2$-direction in the first inequality and Hölder's inequality in the second estimate. Analogously for the temperature and $(y_1,y_2)\in \Omega_\delta$ it holds
\begin{align}
    \label{halfBound-temperature-term}
    |T|(y_1,y_2) \leq \delta^\frac{1}{2} \| \nabla T\|_{L^2(\gamma^-,\gamma^+)}
\end{align}
as $T=0$ on $\gamma^+$. In order to estimate the second integral in \eqref{halfBound-nusselt-on-Omega_delta} partial integration and the boundary condition $T=0$ on $\gamma^+$ yields
\begin{align}
    \label{halfBound-nusselt-second-integral-estimate-1}
    \left|\int_{\Omega_\delta} n_+\cdot \nabla T \ dy \right| \leq \int_{\gamma^+} |T| \ dS+ \int_{\gamma(1-\delta)} |n_+\cdot n_- T| \ dS + \int_{\Omega_\delta} |T\nabla \cdot n_+ | \ dy.
\end{align}
By the maximum principle \eqref{maximum-principle} the temperature is bounded by $\|T\|_\infty \leq 1$, so the first two terms on the right-hand side of \eqref{halfBound-nusselt-second-integral-estimate-1} are bounded by a constant depending on $\|h'\|_{\infty}$ and $|\Omega|$. In order to estimate the last term notice that
\begin{align*}
    n_+=\frac{1}{\sqrt{1+(h')^2}}\begin{pmatrix}-h'\\1\end{pmatrix}\qquad \textnormal{ and }\qquad |\kappa| = \frac{|h''|}{(1+(h')^2)^\frac{3}{2}},
\end{align*}
where $h'=\partial_1 h(y_1)$ and $h''= \partial_1^2h(y_1)$ as derived in \eqref{appendix-normal-vector-representation} and \eqref{appendix-kappa-representation} in the Appendix. Therefore $|\nabla\cdot n_+ |=|\kappa|$, which implies
\begin{align*}
    \int_{\Omega_\delta} |T\nabla \cdot n_+ | \ dy \leq \|\kappa\|{_\infty}|\Omega_\delta|
\end{align*}
for the last term in \eqref{halfBound-nusselt-second-integral-estimate-1}.
Combining these observations
\begin{align}
    \label{halfBound-nusselt-second-integral-estimate-2}
    \left|\int_{\Omega_\delta} n_+\cdot \nabla T \ dy \right| \leq C + \delta\Gamma \|\kappa\|_\infty.
\end{align}
Plugging \eqref{halfBound-velocity-term}, \eqref{halfBound-temperature-term} and \eqref{halfBound-nusselt-second-integral-estimate-2} into \eqref{halfBound-nusselt-on-Omega_delta} and using Hölder inequality, there exists a constant depending on $\|h'\|_{\infty}$ and $|\Omega|$ such that
\begin{align*}
    \Nu &\leq
    \limsup_{T\to\infty} \frac{1}{T}\int_0^T \frac{1}{|\Omega|} \int_{\Omega_\delta} \|\nabla u\|_{L^2(\gamma^-,\gamma^+)}\|\nabla T\|_{L^2(\gamma^-,\gamma^+)} \ dy\ dt + C\frac{1}{\delta}+\|\kappa\|_\infty
    \\
    &\leq C \left(\delta \langle |\nabla u|^2 \rangle^\frac{1}{2} \langle |\nabla T|^2 \rangle^\frac{1}{2}+\frac{1}{\delta}\right)+\|\kappa\|_\infty.
\end{align*}
By \eqref{nusselt-gradT} and \eqref{average-energy-balance} we can substitute both gradients and get
\begin{align*}
    \Nu &\leq C \left(\delta\Ra^\frac{1}{2}\left((1+\max h-\min h) \Nu-1\right)^\frac{1}{2} \Nu^\frac{1}{2}+\frac{1}{\delta}\right)+\|\kappa\|_\infty 
    \\
    &\leq  C\left(\delta\Ra^\frac{1}{2} \Nu+\frac{1}{\delta}\right)+\|\kappa\|_\infty.
\end{align*}
Balancing the terms by choosing $\delta=\Nu^{-\frac{1}{2}}\Ra^{-\frac{1}{4}}$ we get
\begin{align*}
    \Nu \leq  C\Ra^\frac{1}{2}+2\|\kappa\|_\infty
\end{align*}
for $\Ra \geq 1$.

\begin{remark}\label{noslip-remark}
We notice that the same proof (with minor modifications) would yield $\Nu\lesssim \Ra^{\frac 12}+\|\kappa\|_{\infty}$, for a flow with no-slip boundary conditions, which is the case considered by Goluskin and Doering. For comparison, in \cite{GD16} the authors proved $\Nu\lesssim \Ra^{\frac 12}$, where the constant only depends on $\|\nabla h\|_2$, by using the background field method.
\end{remark}
\subsection{Introduction of the background field method}
\leavevmode

In order to improve the bound of Theorem \ref{Lemma-Ra-One-Half-Bound} we follow the "background field" strategy used \cite{drivasNguyenNobiliBoundsOnHeatFluxForRayleighBenardConvectionBetweenNavierSlipFixedTemperatureBoundaries}, which is based on \cite{whiteheadDoeringUltimateState}. This approach consists of specifying a stationary background field for the temperature and show its "marginal stability" as we will explain in what follows. This will be achieved by applying the a-priori bounds derived in Section \ref{Section-A-Priori-Bounds}.

To this end we define the background profile for the temperature by
\begin{align}
    \label{def-eta}
    \eta(y_1,y_2)=1-\frac{1}{2\delta}
    \begin{cases}
        \begin{aligned}
            &2\delta+y_2-(1+h(y_1)) & &\textnormal{for}& 1+h(y_1)-\delta&\leq y_2 \leq 1+h(y_1)\\
            &\delta & &\textnormal{for}& h(y_1)+\delta&< y_2 < 1+h(y_1)-\delta\\
            &y_2-h(y_1) & &\textnormal{for}& h(y_1)&\leq y_2 \leq h(y_1)+\delta
        \end{aligned}
    \end{cases}
\end{align}
for $\delta>0$ and the difference $\theta$ by
\begin{align}
    \label{def-theta}
    \theta = T-\eta.
\end{align}
This profile is illustrated in Figure \ref{fig:background_profile}.

\begin{figure}
    \begin{center}
        \includegraphics[width=0.5\textwidth]{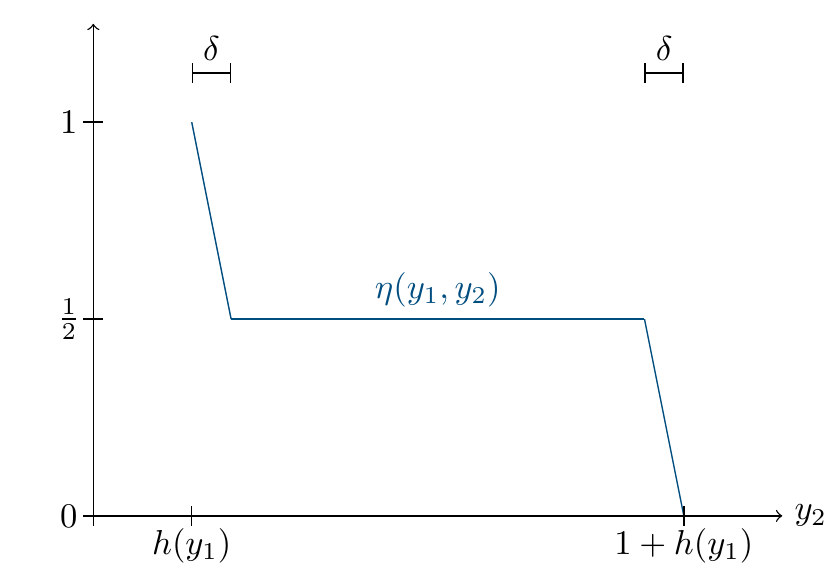}
    \end{center}
    \caption{Illustrations of the background profile $\eta$.}
    \label{fig:background_profile}
\end{figure}

Note that $\eta$ fulfills the boundary conditions of $T$, so $\theta$ vanishes on $\gamma^\pm$. Also since $n_-$ can be expressed by
\begin{align*}
    n_- = \frac{1}{\sqrt{1+(h')^2}}\begin{pmatrix}h'\\ -1\end{pmatrix}
\end{align*}
as derived in \eqref{appendix-normal-vector-representation}, its gradient is given by
\begin{equation}
    \label{nabla-eta-identity}
    \nabla \eta = 
    \begin{cases}
        \begin{aligned}
            &0 &&\textnormal{for}& h(y_1)+\delta < y_2 < 1+h(y_1)-\delta
            &\frac{1}{2\delta} \sqrt{1+(h')^2}n_- &&\textnormal{else}
        \end{aligned}
    \end{cases}
\end{equation}
for almost every $y\in \Omega$. Inserting this decomposition in the definition of the Nusselt number, we have the following.

\begin{proposition}
Let $\eta$ and $\theta$ be defined by \eqref{def-eta} and \eqref{def-theta}. Then
\begin{align}
    \label{nusselt-eta-theta-representation}
    \Nu = \langle |\nabla \eta|^2\rangle -\langle |\nabla\theta|^2\rangle - 2\langle \theta u\cdot \nabla \eta\rangle
\end{align}
\end{proposition}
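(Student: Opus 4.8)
The plan is to derive the identity by decomposing $T=\eta+\theta$ in the representation \eqref{nusselt-gradT}, namely $\Nu=\langle|\nabla T|^2\rangle$, and to replace one factor of $\nabla T$ using a different representation of $\Nu$ so that the cross terms collapse. First I would expand
\begin{align*}
    \Nu = \langle|\nabla T|^2\rangle = \langle|\nabla\eta|^2\rangle + 2\langle\nabla\eta\cdot\nabla\theta\rangle + \langle|\nabla\theta|^2\rangle.
\end{align*}
The middle term must be rewritten: integrating by parts in space and using that $\theta$ vanishes on $\gamma^\pm$ (noted right after \eqref{nabla-eta-identity}) gives $\int_\Omega\nabla\eta\cdot\nabla\theta = -\int_\Omega\theta\,\Delta\eta$ with no boundary contribution. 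Since $\eta$ is piecewise affine in the normal variable, $\Delta\eta$ is supported on the two interfaces $\gamma(\delta)$ and $\gamma(1-\delta)$ as a surface measure; alternatively, and more cleanly, one keeps $\nabla\eta$ as the explicit piecewise expression in \eqref{nabla-eta-identity} and integrates by parts only against $\theta$ in the strips. I expect the slicker route is to avoid $\Delta\eta$ entirely and instead use the temperature equation.

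Concretely, the second step I would take is to use the equation for $\theta$. Subtracting $0 = \eta_t$ (since $\eta$ is stationary) from the advection-diffusion equation \eqref{heatEquation} written for $T=\eta+\theta$ yields
\begin{align*}
    \theta_t + u\cdot\nabla\theta + u\cdot\nabla\eta - \Delta\theta - \Delta\eta = 0.
\end{align*}
Testing this with $\eta$ (which does satisfy the inhomogeneous boundary conditions of $T$, so the boundary terms are controlled), integrating over $\Omega$, and taking the long-time average kills $\langle\partial_t(\cdot)\rangle$ terms by the uniform-in-time bounds on $T$ from the maximum principle. The divergence-free condition and $u\cdot n=0$ on $\partial\Omega$ handle the transport terms: $\langle u\cdot\nabla\theta\,\eta\rangle = -\langle\theta\,u\cdot\nabla\eta\rangle$. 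The Laplacian terms give $\langle\Delta\theta\,\eta\rangle = -\langle\nabla\theta\cdot\nabla\eta\rangle + (\text{boundary})$ and $\langle\Delta\eta\,\eta\rangle = -\langle|\nabla\eta|^2\rangle + (\text{boundary})$. Tracking the boundary terms — here one uses $\eta|_{\gamma^-}=1$, $\eta|_{\gamma^+}=0$, and $\nabla\eta\parallel n$ near the walls from \eqref{nabla-eta-identity}, so $\int_{\gamma^-}n\cdot\nabla\eta = \frac{1}{2\delta}\cdot|\gamma^-|\cdot\text{(length factors)}$ which must reproduce exactly $\langle|\nabla\eta|^2\rangle$ up to the remaining terms — is the one spot that requires care.

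The main obstacle is bookkeeping the boundary contributions and the interface jumps of $\nabla\eta$ consistently, making sure the $\langle|\nabla\eta|^2\rangle$ on the right of \eqref{nusselt-eta-theta-representation} is genuinely $\langle|\nabla\eta|^2\rangle$ and not some boundary-flux surrogate; since $\eta$ is only Lipschitz, $\Delta\eta$ is a measure and one should either work with the explicit strip formulas or justify the integration by parts piecewise on $\{y_2<h+\delta\}$, $\{h+\delta<y_2<1+h-\delta\}$, $\{y_2>1+h-\delta\}$ and sum. Once the $\langle u\cdot\nabla\theta\,\eta\rangle=-\langle\theta u\cdot\nabla\eta\rangle$ identity and the Dirichlet-form manipulations are in hand, combining the tested equation with the expansion of $\langle|\nabla T|^2\rangle$ (or, equivalently, just using $\Nu=\langle|\nabla T|^2\rangle$ together with $\Nu=\langle(u_2-\partial_2)T\rangle/(1+\max h-\min h)$-type fluxes to pin down $\langle|\nabla\eta|^2\rangle$) gives \eqref{nusselt-eta-theta-representation} after rearranging.
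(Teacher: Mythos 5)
Your overall structure (decompose $T=\eta+\theta$, expand $\langle|\nabla T|^2\rangle$ via \eqref{nusselt-gradT}, test the equation for $\theta$, use incompressibility and $u\cdot n=0$, take long-time averages) is the right one, but the step you yourself flag --- the boundary bookkeeping after testing with $\eta$ --- is exactly where the argument is not closed, and your guess at how it resolves is not accurate. Testing with $\eta$ produces the boundary term $\int_{\gamma^-}n\cdot\nabla\theta$ from integrating $\int_\Omega\eta\,\Delta\theta$ by parts (since $\eta=1$ on $\gamma^-$ and $\eta=0$ on $\gamma^+$); writing $\nabla\theta=\nabla T-\nabla\eta$, the $\eta$-flux piece cancels against the boundary term produced by $\int_\Omega\eta\,\Delta\eta$ (so nothing ``reproduces $\langle|\nabla\eta|^2\rangle$'' --- those fluxes simply cancel), while the remaining piece $\int_{\gamma^-}n\cdot\nabla T$ is, after long-time averaging, $|\Omega|\,\Nu$ itself by the very definition of the Nusselt number. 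Carrying this out gives $\Nu=\langle|\nabla\eta|^2\rangle+\langle\nabla\eta\cdot\nabla\theta\rangle-\langle\theta u\cdot\nabla\eta\rangle$, which must then be combined with the expansion $\Nu=\langle|\nabla\eta|^2\rangle+2\langle\nabla\eta\cdot\nabla\theta\rangle+\langle|\nabla\theta|^2\rangle$ and solved for $\Nu$ to reach \eqref{nusselt-eta-theta-representation}; none of this appears in your sketch, the term $\int_\Omega\eta\,u\cdot\nabla\eta=0$ is never recorded, and the inequality \eqref{nusselt-ineq} you invoke cannot ``pin down'' anything, being only a one-sided bound.

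The paper avoids all of this by testing the $\theta$-equation with $\theta$ rather than with $\eta$: since $\theta$ vanishes on $\gamma^\pm$, every boundary term disappears, $\int_\Omega\theta\,u\cdot\nabla\theta=0$ by incompressibility and $u\cdot n=0$, and after averaging one obtains directly $\langle\theta u\cdot\nabla\eta\rangle+\langle\nabla\theta\cdot\nabla\eta\rangle+\langle|\nabla\theta|^2\rangle=0$, which substituted into the expansion of $\langle|\nabla T|^2\rangle$ yields \eqref{nusselt-eta-theta-representation} at once. Your concern that $\Delta\eta$ is only a measure is legitimate but harmless in either route (mollify $\eta$ or integrate by parts piecewise, as the paper notes in a footnote). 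In short: the $\eta$-testing route can be completed, but as written the crucial cancellation of the $\eta$-fluxes and the identification of the leftover boundary flux with $\Nu$ are missing; choosing $\theta$ as the test function removes the difficulty entirely.
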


The proof of this identity is essentially the same as the one for Proposition 7 in \cite{drivasNguyenNobiliBoundsOnHeatFluxForRayleighBenardConvectionBetweenNavierSlipFixedTemperatureBoundaries} and we report it here just for convenience of the reader.
\begin{proof}
Plugging the definitions of $\eta$ and $\theta$ into \eqref{heatEquation} we have
\begin{align*}
    \theta_t + u\cdot\nabla \eta +u\cdot \nabla \theta -\Delta \eta -\Delta \theta = 0,
\end{align*}
and, integrating this equation against $\theta$ we find
\begin{align}
    \label{theta-energy-identity}
    0=\frac{1}{2}\frac{d}{dt}\|\theta\|_2^2 + \int_\Omega \theta u\cdot \nabla \eta + \int_\Omega \theta u\cdot \nabla \theta -\int_\Omega \theta\Delta \eta-\int_\Omega \theta\Delta\theta.
\end{align}
The third term on the right-hand side of \eqref{theta-energy-identity} vanishes since $u\cdot n=0$ and $u$ is divergence-free. For the fourth term on the right-hand side of \eqref{theta-energy-identity} we get
\begin{align*}
    -\int_\Omega \theta \Delta \eta  &= - \int_{\gamma^-\cup\gamma^+} \theta n\cdot \nabla \eta  + \int_\Omega \nabla\theta \cdot\nabla \eta  = \int_\Omega \nabla\theta \cdot\nabla \eta ,
\end{align*}
where in the last equality we used that $\theta$ vanishes on the boundary by definition. Similarly
\begin{align*}
    -\int_\Omega \theta \Delta \theta = \|\nabla\theta\|_2^2.
\end{align*}
Therefore taking the long time average of \eqref{theta-energy-identity} and using that $\theta$ is universally bounded in time as both $T$ and $\eta$ fulfill $0\leq T,\eta\leq 1$ we find
\begin{align*}
    \langle \theta u\cdot\nabla \eta \rangle + \langle \nabla\theta \cdot \nabla \eta \rangle + \langle |\nabla\theta|^2\rangle = 0.
\end{align*}
Using this identity in \eqref{nusselt-gradT} we find
\begin{align*}
    \Nu = \langle|\nabla T|^2 \rangle = \langle |\nabla \eta|^2\rangle + 2\langle \nabla \theta \cdot\nabla\eta \rangle + \langle |\nabla \theta|^2\rangle = \langle |\nabla \eta|^2\rangle - \langle |\nabla \theta|^2\rangle -2\langle \theta u \cdot\nabla\eta \rangle
\end{align*}
as a representation of $\Nu$.\footnote{The argument can be rigorously justified via mollification of $\eta$.}
\end{proof}

Next we define
\begin{equation*}
    \label{def-a}
    \begin{aligned}
        \mathbf{a} &:= \langle |\nabla \omega|^2\rangle -2 \langle (\alpha+\kappa)u\cdot\nabla p\rangle_{\gamma^-\cup\gamma^+} - \Ra \langle \omega\partial_1 T\rangle 
        \\
        &\qquad + \frac{2}{3\Pra} \langle (\alpha+\kappa) u\cdot (u\cdot\nabla) u\rangle_{\gamma^-\cup\gamma^+} + 2\Ra \langle (\alpha+\kappa) u_\tau n_1\rangle_{\gamma^-} =0,
    \end{aligned}
\end{equation*}
where the last identity is due to \eqref{average-enstrophy-balance} and 
\begin{align}
    \label{def-b}
    \mathbf{b} &:= \langle|\nabla u|^2\rangle + \langle (2\alpha+\kappa) u_\tau^2\rangle_{\gamma^-\cup \gamma^+} - \Ra\left((1+\max h - \min h) \Nu-1\right)\,.
\end{align}
Using \eqref{nusselt-eta-theta-representation} we can rewrite the Nusselt number as
\begin{align}
    \label{nu-identity-including-Q}
    (1-b(1+&\max h -\min h))\Nu + b 
    = M\Ra^2+2\langle |\nabla\eta|^2\rangle-\mathcal{Q}[u,\theta,\eta]
\end{align}
where the quadratic form $\mathcal{Q}$ is defined as
\begin{multline}\label{quadratic-form}
    \mathcal{Q}[u,\theta,\eta]:=M \Ra^2 + \langle |\nabla \eta|^2\rangle + \langle |\nabla \theta|^2\rangle + 2\langle \theta u\cdot\nabla \eta\rangle 
    \\
    \qquad + \frac{b}{\Ra}\langle |\nabla u|^2\rangle + \frac{b}{\Ra}\langle (2\alpha+\kappa)u_\tau^2\rangle_{\gamma^-\cup\gamma^+} - \frac{b}{\Ra}\mathbf{b} + a \mathbf{a}\,.
\end{multline}
In this new representation $a>0$ and $0\leq b<(1+\max h - \min h)^{-1}$ and notice that the balancing term $M\Ra^2$, with $M>0$, was introduced. The choice of the parameters $a,b,M$ will follow from an optimization procedure at the end.

We now want to prove that for a suitable choice of $\delta$ the form $\mathcal{Q}$ is non negative. In order to do so we need the following Lemma.
\begin{lemma}
\label{lemma-estimate-theta-u-gradeta}
One has
\begin{align*}
    2|\langle\theta u\cdot \nabla \eta\rangle| &\leq  \delta^6 C (a\epsilon)^{-1}\langle |\partial_2 u|^2\rangle + a\epsilon \langle |\partial_2^2 u|^2\rangle +\frac{1}{2}\langle |\partial_2 \theta|^2\rangle
\end{align*}
for any $\epsilon>0$.
\end{lemma}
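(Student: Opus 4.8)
## Proof Strategy for Lemma \ref{lemma-estimate-theta-u-gradeta}

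The plan is to exploit that $\nabla\eta$ is supported only in the two boundary strips of width $\delta$ (see \eqref{nabla-eta-identity}), where it has magnitude $\frac{1}{2\delta}\sqrt{1+(h')^2}$, and that both $\theta$ and $u$ vanish on $\gamma^\pm$ (the former by construction of $\eta$, the latter by the non-penetration condition $u\cdot n = 0$ plus the fact that $n_+\cdot u$ is the relevant component). First I would write $2\langle\theta u\cdot\nabla\eta\rangle$ as a space-time average of an integral over $\Omega_\delta^+\cup\Omega_\delta^-$ (the two strips), pulling out $\|\nabla\eta\|_\infty \lesssim \delta^{-1}$. On each strip, the fundamental theorem of calculus in the $y_2$-direction starting from the boundary gives $|T|(y_1,y_2)=|\theta|(y_1,y_2)\le \int |\partial_2\theta|$ and, crucially, a \emph{second-order} estimate for the velocity: since $n_+\cdot u$ and $\partial_2(n_+\cdot u)$ can both be controlled — the first vanishes on $\gamma^+$, so $|n_+\cdot u|(y_1,y_2)\le \int_{y_2}^{1+h}|\partial_2(n_+\cdot u)|$, and then $\partial_2(n_+\cdot u)$ itself can be expanded from its boundary value (which is related to $\alpha, \kappa, u_\tau$ via the Navier-slip conditions, but in any case is dominated by $\|\partial_2^2 u\|$ on the strip after integrating once more) — one obtains a bound of the form $|n_+\cdot u| \lesssim \delta^{3/2}\|\partial_2^2 u\|_{L^2(\gamma^-,\gamma^+)} + (\text{boundary contribution})$, where the $\delta^{3/2}$ comes from integrating $|\partial_2^2 u|$ twice against Hölder. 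Combining the $\delta^{-1}$ from $\nabla\eta$, a $\delta^{1/2}$ from $|\theta|$, and a $\delta^{3/2}$ (or $\delta^{5/2}$ after another integration in $y_2$) from $|n_+\cdot u|$, together with the measure $|\Omega_\delta|\sim\delta$ of the strip, yields the total power $\delta^6$ advertised in the statement after one application of Young's inequality splitting the product into the three terms.

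The key technical step is the velocity estimate. The point is that we cannot afford to lose a factor of $\|\partial_2 u\|$ directly paired against $\|\theta\|\sim \delta^{1/2}\|\partial_2\theta\|$ and $\|\nabla\eta\|\sim\delta^{-1}$, because that would only give $\delta^0$ and the term would not be absorbable. Instead, exploiting that the normal velocity vanishes on the boundary, one integrates \emph{twice}: once to write $n_+\cdot u$ in terms of $\partial_2(n_+\cdot u)$, and once more noting that $\partial_2(n_+\cdot u)\big|_{\gamma^+}$ is not zero but is controlled (it involves the vorticity boundary condition $\omega = -2(\alpha+\kappa)u_\tau$, hence $\|\alpha+\kappa\|_\infty\|u\|$-type terms which are lower order, or can be folded into the $\langle|\partial_2 u|^2\rangle$ term). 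Carefully, on $\Omega_\delta^+$ one has $|n_+\cdot u|(y_1,y_2) \le \int_{y_2}^{1+h(y_1)} |\partial_2(n_+\cdot u)|\,dz$, and then $|\partial_2(n_+\cdot u)|(y_1,z) \le |\partial_2(n_+\cdot u)|_{\gamma^+}| + \int_z^{1+h(y_1)}|\partial_2^2(n_+\cdot u)|$; using Hölder on the strip of width $\le\delta$ this gives $|n_+\cdot u| \lesssim \delta\,|\partial_2(n_+\cdot u)|_{\gamma^+}| + \delta^{3/2}\|\partial_2^2 u\|_{L^2(\gamma^-,\gamma^+)}$. The boundary derivative term, multiplied by the overall $\delta^{-1}$ and $\delta^{1/2}\|\partial_2\theta\|$ and integrated, contributes something of order $\langle|\partial_2 u|^2\rangle$-type (a lower-order term that can be absorbed, or handled by the constant $C$), while the $\delta^{3/2}$-term gives, after multiplying by $\delta^{-1}\cdot\delta^{1/2}\cdot|\Omega_\delta|$ and Young, the claimed $\delta^6 C(a\epsilon)^{-1}\langle|\partial_2 u|^2\rangle$ — wait, more precisely it pairs against $\langle|\partial_2^2 u|^2\rangle$ with coefficient $a\epsilon$ and against $\langle|\partial_2 u|^2\rangle$ with the large $\delta^6C(a\epsilon)^{-1}$ coefficient after a further interpolation of $\|\partial_2^2 u\|$ by $\|\partial_2 u\|$ and $\|\partial_2^2 u\|$.

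The main obstacle I anticipate is bookkeeping the exact power of $\delta$ and correctly distributing the three resulting quadratic terms: one wants the coefficient of $\langle|\partial_2^2 u|^2\rangle$ to be exactly $a\epsilon$ (no $\delta$), the coefficient of $\langle|\partial_2\theta|^2\rangle$ to be the harmless $\tfrac12$, and \emph{all} the $\delta$-dependence (a high positive power, $\delta^6$) together with the $(a\epsilon)^{-1}$ loss to land on $\langle|\partial_2 u|^2\rangle$. Achieving this requires: (i) using the strip's small measure $\lesssim\delta$; (ii) two integrations in $y_2$ for the normal velocity giving $\delta^{3/2}$ against $\|\partial_2^2 u\|$; (iii) one integration for $\theta$ giving $\delta^{1/2}$; (iv) the $\delta^{-1}$ from $\|\nabla\eta\|_\infty$; and (v) a final Young step $\|\partial_2^2 u\|\|\partial_2 u\| \le \epsilon\|\partial_2^2u\|^2 + \epsilon^{-1}\|\partial_2 u\|^2$, after which one re-optimizes the split between $\epsilon$ and $\delta$-powers so that the three named terms appear exactly as stated. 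A subtlety worth flagging: one must be sure the boundary-value term $\partial_2(n_+\cdot u)|_{\gamma^+}$ is genuinely lower order — this follows because it is essentially $\mathbb{D}u\cdot n$ at the boundary, which the Navier-slip condition \eqref{BC-u} identifies with $-\alpha u_\tau$, hence controlled by $\|\alpha\|_\infty\|u\|_{L^2(\gamma^-,\gamma^+)} \lesssim \|u\|_{H^1}$, and this contribution is absorbed into the $\langle|\partial_2 u|^2\rangle$ term (up to the constant $C$, which is allowed to depend on $\|h'\|_\infty$, $|\Omega|$, and here implicitly on $\|\alpha+\kappa\|_\infty$ through the smallness assumptions already in force).
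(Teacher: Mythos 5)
There is a genuine gap, and it sits exactly at the step you flag as "the key technical step". The paper's proof does not expand $\partial_2(n\cdot u)$ from its boundary value at all; instead it uses the cancellation \eqref{different_boundaries_not_0_anymore}: since $u\cdot n=0$ on \emph{both} $\gamma^-$ and $\gamma^+$ and $n_-$ is constant in $y_2$ (same profile top and bottom), $\int_{h}^{1+h}\partial_2(n_-\cdot u)\,dy_2=0$, so for each $y_1$ there is a point $\bar y_2$ where $\partial_2(u\cdot n_-)$ vanishes. Integrating $(\partial_2(u\cdot n_-))^2$ from $\bar y_2$ gives the Agmon-type bound $\|\partial_2(u\cdot n_-)\|_{L^\infty(\gamma^-,\gamma^+)}^2\le 2\|\partial_2 u\|_{L^2(\gamma^-,\gamma^+)}\|\partial_2^2 u\|_{L^2(\gamma^-,\gamma^+)}$, hence $|u\cdot n_-|\le \delta\,\|\partial_2u\|^{1/2}\|\partial_2^2u\|^{1/2}$ on the strip. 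This produces the \emph{three-factor} product $\delta^{3/2}\|\partial_2u\|^{1/2}\|\partial_2^2u\|^{1/2}\|\partial_2\theta\|$, and only then can a three-way Young inequality place the weight $a\epsilon$ on $\langle|\partial_2^2u|^2\rangle$, $\tfrac12$ on $\langle|\partial_2\theta|^2\rangle$, and all the $\delta$-excess, $\delta^6C(a\epsilon)^{-1}$, on $\langle|\partial_2u|^2\rangle$. Your route discards this mechanism: after the triangle inequality your main term is only the two-factor product $\sim\delta^2\|\partial_2^2u\|\|\partial_2\theta\|$, so Young forces a coefficient of order $\delta^4$ (not $a\epsilon$) onto $\langle|\partial_2^2u|^2\rangle$, and there is no legitimate "further interpolation of $\|\partial_2^2u\|$ by $\|\partial_2u\|$ and $\|\partial_2^2u\|$" that redistributes weight onto $\langle|\partial_2u|^2\rangle$; that redistribution is precisely what the mean-zero/Agmon step buys and cannot be recovered afterwards.

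The second problem is your treatment of the boundary term $\partial_2(n_+\cdot u)\vert_{\gamma^+}$. It is not "essentially $\mathbb{D}u\cdot n$" and is not identified with $-\alpha u_\tau$ by \eqref{BC-u}. Writing $e_2=(n\cdot e_2)n+(\tau\cdot e_2)\tau$ and using $u\cdot n=0$ on the boundary together with incompressibility and \eqref{id-kappa}, one finds $n\cdot\partial_2 u = -(n\cdot e_2)\,\partial_\lambda u_\tau + (\tau\cdot e_2)\,\kappa u_\tau$ on $\gamma^\pm$: it contains the \emph{tangential} derivative of $u_\tau$, a genuine first-order trace of $\nabla u$, which is not controlled by $\|\alpha\|_\infty\|u\|_{L^2(\gamma^-,\gamma^+)}$ and cannot simply be "absorbed into $\langle|\partial_2u|^2\rangle$" (controlling a boundary trace of $\nabla u$ by interior $L^2$ norms again costs second derivatives, and the resulting $\delta$-bookkeeping leaves terms such as $C\delta^3\langle|\partial_2u|^2\rangle$ that do not match the stated bound for general $\delta,a,\epsilon$). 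So, as written, the proposal neither justifies its velocity estimate on the strip nor reproduces the precise three-term structure of the lemma; the missing idea is the top--bottom cancellation \eqref{different_boundaries_not_0_anymore} and the resulting $L^\infty$ interpolation of $\partial_2(u\cdot n_-)$ between $\|\partial_2u\|_{L^2}$ and $\|\partial_2^2u\|_{L^2}$.
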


\begin{proof}
By \eqref{nabla-eta-identity}
\begin{equation}
    \label{theta-u-grad-eta-estimate-1}
    \begin{aligned}
        2\int_\Omega \theta u\cdot\nabla \eta &= \frac{1}{\delta} \int_0^\Gamma \int_{h(y_1)}^{h(y_1)+\delta} \sqrt{1+(h')^2} \theta u\cdot n_-\ dy_2\ dy_1
        \\
        &\qquad + \frac{1}{\delta} \int_0^\Gamma\int_{1+h(y_1)-\delta}^{1+h(y_1)} \sqrt{1+(h')^2} \theta u\cdot n_-\ dy_2 \ dy_1
    \end{aligned}
\end{equation}
We focus on the first term on the right-hand side. The second one can be treated similarly. By the fundamental theorem of calculus and Hölder's inequality
\begin{equation}
    \label{theta-u-grad-eta-estimate-u}
    \begin{aligned}
        |u\cdot n_-|(y_1,y_2)&=\left|(u\cdot n_-)(y_1,h(y_1))+ \int_{h(y_1)}^{y_2} \partial_2(u\cdot n_-)(y_1,z) \ dz\right| 
        \\
        &\leq \delta \|\partial_2(u\cdot n_-)\|_{L^\infty(\gamma^-,\gamma^+)}
    \end{aligned}
\end{equation}
for $h(y_1)\leq y_2\leq h(y_1)+\delta$, where in the last inequality we used the boundary condition for $u$. Similarly
\begin{align}
    \label{theta-u-grad-eta-estimate-theta}
    |\theta(y_1,y_2)|\leq \delta^\frac{1}{2}\|\partial_2 \theta\|_ {L^2(\gamma^-,\gamma^+)}
\end{align}
as $\theta$ vanishes on the boundary. In order to estimate $\partial_2 (n_-\cdot u)$ notice that by partial integration and the boundary condition for $u$
\begin{align}
    \label{different_boundaries_not_0_anymore}
    \int_{h(y_1)}^{1+h(y_1)} \partial_2 (n_- \cdot u) = n_2 \ n_-\cdot u\vert_{\gamma^-} - n_2 \ n_-\cdot u\vert_{\gamma^+} = 0.
\end{align}
Therefore for every $y_1$ there exists $h(y_1)\leq \bar y_2\leq 1+h(y_1)$ such that $\partial_2(u\cdot n_-)(y_1,\bar y_2)=0$. Applying the fundamental theorem of calculus again we find
\begin{equation}
    \label{theta-u-grad-eta-estimate-d22u}
    \begin{aligned}
        (\partial_2 (u\cdot n_-))^2(y_1,y_2)&= (\partial_2 (u\cdot n_-))^2 (y_1,\bar y_2) + \int_{\bar y_2}^{y_2} \partial_2 \left((\partial_2(u\cdot n_-))^2\right)(y_1,z)\ dz
        \\
        &\leq 2 \|\partial_2 u\|_{L^2(\gamma^-,\gamma^+)}\|\partial_2^2 u\|_{L^2(\gamma^-,\gamma^+)},
    \end{aligned}
\end{equation}
where in the last inequality we used Hölder's inequality, that $n_-$ is constant in $e_2$ direction and $|n_-|=1$. Combing \eqref{theta-u-grad-eta-estimate-1} with \eqref{theta-u-grad-eta-estimate-u}, \eqref{theta-u-grad-eta-estimate-theta} and \eqref{theta-u-grad-eta-estimate-d22u} and using Young's inequality twice yields
\begin{align*}
    \bigg|2\int_\Omega &\theta u\cdot\nabla \eta\ dy\bigg|
    \\
    &\leq (2 \delta)^\frac{3}{2}\int_0^\Gamma \|\partial_2 u\|_{L^2(\gamma^-,\gamma^+)}^\frac{1}{2}\|\partial_2^2 u\|_{L^2(\gamma^-,\gamma^+)}^\frac{1}{2} \|\partial_2\theta\|_{L^2(\gamma^-,\gamma^+)}\sqrt{1+(h')^2}\ dy_1
    \\
    &\leq  C\delta^\frac{3}{2} \int_0^\Gamma\mu\nu\|\partial_2 u\|_{L^2(\gamma^-,\gamma^+)}^2+\mu\nu^{-1}\|\partial_2^2 u\|_{L^2(\gamma^-,\gamma^+)}^2 + \mu^{-1} \|\partial_2\theta\|_{L^2(\gamma^-,\gamma^+)}^2\ dy_1
\end{align*}
for some $\mu,\nu>0$ that will be determined later and $C=\left\|\sqrt{1+(h')^2}\right\|_{\infty}$.

Taking the long time average
\begin{align*}
    2|\langle\theta u\cdot \nabla \eta\rangle| \leq C\delta^\frac{3}{2} \left(\mu\nu \langle |\partial_2 u|^2\rangle + \mu \nu^{-1} \langle |\partial_2^2 u|^2\rangle +\mu^{-1}\langle |\partial_2 \theta|^2\rangle \right)
\end{align*}
and setting $\mu = 2\delta^\frac{3}{2}C$ and $\nu=2\delta^3 C^2 (a\epsilon)^{-1}$ yields the result.
\end{proof}

With all these preparations at hand we are able to prove the main result in the next subsection.
\subsection{Proof of Theorem \ref{main-theorem}}
In the following we will extensively use
\begin{align}
    \label{proof-main-theorem-essinf-alpha-and-kappa-leq-1}
    \underline{\alpha}\leq 1, \qquad \|\kappa\|_\infty\leq 1.
\end{align}
The first inequality is justified as $\essinf \alpha \leq \essinf_{\kappa>0} (\alpha+\kappa) \leq \|\alpha+\kappa\|_\infty \leq 1$ by assumption \eqref{theorem-condition-alpha+kappa-small} and the second one as $\kappa(y_1,h(y_1))=-\kappa(y_1,1+h(y_1))$ and $\alpha>0$ almost everywhere one has $-\essinf_{\kappa <0}\kappa=\esssup_{\kappa>0}\kappa\leq \esssup_{\kappa>0} \alpha+\kappa\leq \|\alpha+\kappa\|_\infty\leq 1$ by assumption \eqref{theorem-condition-alpha+kappa-small}.

We will show that $\mathcal{Q}$ is non-negative for some appropriate choice of $\delta$. Then \eqref{nu-identity-including-Q} will yield the bound.

As $\mathbf{b}\leq 0$ by \eqref{average-energy-balance} plugging in the definition of $\mathbf{a}$, i.e. \eqref{def-a}, yields
\begin{equation}
    \label{Q-estimation-1}
    \begin{aligned}
        \mathcal{Q}[u,\theta,\eta]&=M \Ra^2 + \langle |\nabla\eta|^2\rangle + \langle |\nabla \theta|^2\rangle + 2\langle \theta u\cdot\nabla \eta\rangle + \frac{b}{\Ra}\langle |\nabla u|^2\rangle + \frac{b}{\Ra}\langle (2\alpha+\kappa)u_\tau^2\rangle_{\gamma^-\cup\gamma^+}
        \\
        &\qquad - \frac{b}{\Ra}\mathbf{b} + a \mathbf{a}
        \\
        &\geq M \Ra^2 + \langle |\nabla\eta|^2\rangle + \langle |\nabla \theta|^2\rangle + 2\langle \theta u\cdot\nabla \eta\rangle + \frac{b}{\Ra}\langle |\nabla u|^2\rangle + \frac{b}{\Ra}\langle (2\alpha+\kappa)u_\tau^2\rangle_{\gamma^-\cup\gamma^+} 
        \\
        &\qquad + a\langle |\nabla \omega|^2\rangle - 2 a \langle (\alpha+\kappa)u\cdot\nabla p\rangle_{\gamma^-\cup\gamma^+} - a \Ra \langle \omega\partial_1 T\rangle 
        \\
        &\qquad + \frac{2}{3\Pra} a \langle (\alpha+\kappa) u\cdot (u\cdot\nabla) u\rangle_{\gamma^-\cup\gamma^+} + 2 a \Ra \langle (\alpha+\kappa) u_\tau n_1\rangle_{\gamma^-}.
    \end{aligned}
\end{equation}
Next we estimate some of the terms individually. 
\begin{itemize}
    \item
    For the eighth term on the right-hand side of \eqref{Q-estimation-1} we can shift the derivative onto $u$ and $\alpha+\kappa$ as the boundary is periodic and get
    \begin{align*}
        -\int_{\gamma^-\cup\gamma^+} (\alpha+\kappa)u\cdot \nabla p \ dS &= \langle p \tau \cdot \nabla ((\alpha+\kappa) u_\tau)\rangle_{\gamma^-\cup\gamma^+} 
        \\
        &= \langle (\alpha+\kappa) p \tau \cdot \nabla u_\tau\rangle_{\gamma^-\cup\gamma^+} + \langle p (\dot\alpha+\dot\kappa) u_\tau\rangle_{\gamma^-\cup\gamma^+} 
    \end{align*}
    Using Hölder's inequality and Trace Theorem one gets
    \begin{align*}
        \left|\int_{\gamma^-\cup\gamma^+} (\alpha+\kappa)u\cdot \nabla p \ dS\right| \leq C \left( \|\alpha+\kappa\|_\infty \|u\|_{H^2} + \|\dot\alpha+\dot\kappa\|_\infty\|u\|_{H^1} \right)\|p\|_{H^1}
    \end{align*}
    where $\dot \alpha$ and $\dot \kappa$ denotes the derivative of $\alpha$ and $\kappa$ along the boundary. The pressure bound derived in Proposition \ref{proposition-pressure-bound} and Young's inequality imply
    \begin{equation}
        \label{Q-estimation-up}
        \begin{aligned}
            &2\left|\int_{\gamma^-\cup\gamma^+} (\alpha+\kappa)u\cdot \nabla p  \ dS\right|
            \\
            &\qquad \leq C \left( \|\alpha+\kappa\|_\infty \|u\|_{H^2} + \|\dot\alpha+\dot\kappa\|_\infty\|u\|_{H^1} \right)
            \\
            &\qquad\qquad\cdot\left[Ra\|T\|_2 + \|\alpha+\kappa\|_\infty\|u\|_{H^2}+\left(\frac{1+\|\kappa\|_\infty}{\Pra}\|u\|_{W^{1,r}}+\|\dot\alpha+\dot\kappa\|_\infty \right)\|u\|_{H^1}\right]
            \\
            &\qquad \leq \left(\epsilon+C\|\alpha+\kappa\|_\infty^2\right) \|u\|_{H^2}^2 + C_\epsilon \|\alpha+\kappa\|_{W^{1,\infty}}^2 \Ra^2  
            \\
            &\qquad\qquad + C\left( \left(\frac{1}{\Pra}\|u\|_{W^{1,r}}\right)^2+\|\alpha+\kappa\|_{W^{1,\infty}}^2+1 \right)\|u\|_{H^1}^2
        \end{aligned}
    \end{equation}
    for all $\epsilon>0$, where $C_\epsilon>0$ depends on $|\Omega|$, $r$, $\|h'\|_{\infty}$ and $\epsilon$ and in the last inequality we used that $\|\kappa\|\leq 1$.
    \item
    For the ninth term on the right-hand side of \eqref{Q-estimation-1} Hölder's and Young's inequality yield
    \begin{align*}
        |a\Ra \langle \omega\partial_1 T\rangle| &= |a\Ra \langle \omega\partial_1 (\eta+\theta)\rangle| \leq \frac{1}{2} \langle|\nabla \eta|^2\rangle + \frac{1}{2} \langle|\nabla \theta|^2\rangle + a^2 \Ra^2 \langle |\omega|^2\rangle.
    \end{align*}
    \item
    In order to estimate the tenth term on the right-hand side of \eqref{Q-estimation-1} we first use Hölder's inequality and Trace Theorem to get
    \begin{align}
        \label{Q-estimation-uuu-1}
        \frac{1}{\Pra}&\left|\int_{\gamma^-\cup\gamma^+} (\alpha+\kappa) u\cdot(u\cdot \nabla)u \ dS\right| 
        \leq C\frac{\|\alpha+\kappa\|_\infty}{\Pra} \left\|u^2 |\nabla u|\right\|_{W^{1,1}} .
    \end{align}
    Again Hölder's inequality with $\frac{1}{r}+\frac{1}{p}+\frac{1}{2}=1$ and Sobolev Theorem as in the proof of Proposition \ref{proposition-pressure-bound} imply
    \begin{equation}
        \label{Q-estimation-uuu-2}
        \begin{aligned}
            \left\|u^2 |\nabla u|\right\|_{W^{1,1}} &\leq C\left(\|u\|_q\|u\|_r \|\nabla u\|_2 + \|u\|_q\|\nabla u\|_r \|\nabla u\|_2+\|u\|_q\|u\|_r \|\nabla^2 u\|_2 \right) 
            \\
            &\leq C \left(\|u\|_{W^{1,r}}\|u\|_{H^1}+\| u\|_{H^2}\|u\|_{W^{1,r}}\right)\|u\|_{H^1}
        \end{aligned}
    \end{equation}
    for all $r>2$. Combining \eqref{Q-estimation-uuu-1} and \eqref{Q-estimation-uuu-2} and using Young's inequality and the assumption $\|\alpha+\kappa\|_\infty\leq 1$ yields
    \begin{equation}
        \label{Q-estimation-uuu-3}
        \begin{aligned}
            \frac{2}{3\Pra}\bigg|\int_{\gamma^-\cup\gamma^+} (\alpha+\kappa) &u\cdot(u\cdot \nabla)u \ dS\bigg| 
            \\
            &\leq C\frac{1}{\Pra} \left(\|u\|_{W^{1,r}}\|u\|_{H^1}+\| u\|_{H^2}\|u\|_{W^{1,r}}\right)\|u\|_{H^1}
            \\ 
            &\leq \epsilon \|u\|_{H^2}^2 + C\left( C_\epsilon\left(\frac{1}{\Pra} \|u\|_{W^{1,r}}\right)^2+1\right) \|u\|_{H^1}^2
        \end{aligned}
    \end{equation}
    for all $\epsilon>0$ where $C_\epsilon>0$ depends on $|\Omega|$, $r$, $\|h'\|_{\infty}$ and $\epsilon$.
    \item
    In order to estimate the eleventh term on the right-hand side of \eqref{Q-estimation-1} notice that by Trace Theorem and Young's inequality
    \begin{align}
        \label{Q-estimation-un}
        2\Ra \left|\int_{\gamma^-\cup\gamma^+} (\alpha+\kappa) u_\tau n_1 \ dS\right| &\leq C \Ra \|\alpha+\kappa\|_\infty \|u\|_{H^1} \leq C \|\alpha+\kappa\|_\infty^2\Ra^2 +  \|u\|_{H^1}^2.
    \end{align}
\end{itemize}
In order to apply these estimates we first notice that by Lemma \ref{lemma_u_bounded_by_omega}
\begin{align*}
    \|u\|_{W^{1,r}}\leq C \left(\|\omega\|_r + (1+\|\kappa\|_\infty)^{2-\frac{2}{r}}\|u\|_2\right).
\end{align*}
The $L^p$-norm of the vorticity and energy are bounded by Lemma \ref{LemmaVorticityBound} and Lemma \ref{lemmaEnergyDecay} respectively, implying
\begin{align*}
    \|u\|_{W^{1,r}}\leq C \left(\|u_0\|_{W^{1,r}} +\underline{\alpha}^{-1}\Ra\right),
\end{align*}
where we exploited \eqref{proof-main-theorem-essinf-alpha-and-kappa-leq-1}.
Using this bound for the $W^{1,r}$ norm of $u$, the prefactors in the individual estimates are independent of time. Then taking the long time average of \eqref{Q-estimation-up}, \eqref{Q-estimation-uuu-3} and \eqref{Q-estimation-un} and plugging the bounds into \eqref{Q-estimation-1} yields
\begin{align*}
        \mathcal{Q}[u,\theta,\eta]
        &\geq M \Ra^2 + \frac{1}{2} \langle |\nabla\eta|^2\rangle + \frac{1}{2} \langle |\nabla \theta|^2\rangle + 2\langle \theta u\cdot\nabla \eta\rangle + \frac{b}{\Ra}\langle |\nabla u|^2\rangle + \frac{b}{\Ra}\langle (2\alpha+\kappa)u_\tau^2\rangle_{\gamma^-\cup\gamma^+} 
        \\
        &\qquad + a\langle |\nabla \omega|^2\rangle - a\left[2\epsilon+C\|\alpha+\kappa\|_\infty^2 \right]\left\langle \|u\|_{H^2}^2\right\rangle - C_\epsilon a \|\alpha+\kappa\|_{W^{1,\infty}}^2 \Ra^2
        \\
        &\qquad - aC\left[ C_\epsilon\left(\frac{1}{\Pra}\left(\|u_0\|_{W^{1,r}} + \underline{\alpha}^{-1}\Ra\right)\right)^2+\|\alpha+\kappa\|_{W^{1,\infty}}^2+1 \right]\left\langle\|u\|_{H^1}^2\right\rangle 
        \\
        &\qquad - a^2 \Ra^2 \langle |\omega|^2\rangle.
\end{align*}
Choosing $M= C_\epsilon a\|\alpha+\kappa\|_{W^{1,\infty}}^2$
\begin{equation}
    \label{Q-estimation-2}
    \begin{aligned}
        \mathcal{Q}[u,\theta,\eta]
        &\geq \frac{1}{2} \langle |\nabla \theta|^2\rangle + 2\langle \theta u\cdot\nabla \eta\rangle + \frac{b}{\Ra}\langle |\nabla u|^2\rangle + \frac{b}{\Ra}\langle (2\alpha+\kappa)u_\tau^2\rangle_{\gamma^-\cup\gamma^+} 
        \\
        &\qquad + a\langle |\nabla \omega|^2\rangle - a\left[2\epsilon+C\|\alpha+\kappa\|_\infty^2 \right]\left\langle \|u\|_{H^2}^2\right\rangle
        \\
        &\qquad - aC\left[ C_\epsilon\left(\frac{1}{\Pra}\left(\|u_0\|_{W^{1,r}} + \underline{\alpha}^{-1}\Ra\right)\right)^2+\|\alpha+\kappa\|_{W^{1,\infty}}^2+1 \right]\left\langle\|u\|_{H^1}^2\right\rangle 
        \\
        &\qquad - a^2 \Ra^2 \langle |\omega|^2\rangle.
    \end{aligned}
\end{equation}
In order to estimate the second term on the right-hand side of \eqref{Q-estimation-2} use Lemma \eqref{lemma-estimate-theta-u-gradeta} to get
\begin{align*}
        \mathcal{Q}[u,\theta,\eta]
        &\geq \frac{b}{\Ra}\langle |\nabla u|^2\rangle + \frac{b}{\Ra}\langle (2\alpha+\kappa)u_\tau^2\rangle_{\gamma^-\cup\gamma^+} 
        \\
        &\qquad + a\langle |\nabla \omega|^2\rangle - a\left[3\epsilon+C\|\alpha+\kappa\|_\infty^2 \right]\left\langle \|u\|_{H^2}^2\right\rangle
        \\
        &\qquad - aC\left[ C_\epsilon\left(\frac{1}{\Pra}\left(\|u_0\|_{W^{1,r}} + \underline{\alpha}^{-1}\Ra\right)\right)^2+\|\alpha+\kappa\|_{W^{1,\infty}}^2+1 \right]\left\langle\|u\|_{H^1}^2\right\rangle 
        \\
        &\qquad - a^2 \Ra^2 \langle |\omega|^2\rangle- C_{\epsilon}\delta^6 a^{-1}\langle |\nabla u|^2\rangle.
\end{align*}
Next according to Lemma \eqref{lemma-H1-bounded-by-grad-and-bdry-terms} the first two terms on the right-hand side can be estimated by the $H^1$ norm, i.e.
\begin{align}
    \label{main-theorem-lemma-H1-bounded-by-grad-bdry-terms-explicite-used-formula}
    \frac{3b}{8\Ra}\langle |\nabla u|^2\rangle + \frac{b}{2\Ra} \langle (2\alpha+\kappa) u_\tau^2 \rangle_{\gamma^-\cup\gamma^+} \geq \frac{\underline{\alpha}b}{8\Ra}   \langle \|u\|_{H^1}^2 \rangle.
\end{align}
Then
\begin{align*}
        \mathcal{Q}[u,\theta,\eta]
        &\geq a\langle |\nabla \omega|^2\rangle - a\left[3\epsilon+C\|\alpha+\kappa\|_\infty^2 \right]\left\langle \|u\|_{H^2}^2\right\rangle
        \\
        &\qquad + \left[\frac{\underline{\alpha} b}{8\Ra} - a C_\epsilon\left(\left(\frac{1}{\Pra}\left(\|u_0\|_{W^{1,r}} +\underline{\alpha}^{-1}\Ra\right)\right)^2+\|\alpha+\kappa\|_{W^{1,\infty}}^2+1\right) \right]\left\langle\|u\|_{H^1}^2\right\rangle
        \\
        &\qquad - a^2 \Ra^2 \langle |\omega|^2\rangle + \left(\frac{5b}{8\Ra}-C_\epsilon\delta^6 a^{-1}\right) \langle |\nabla u|^2\rangle + \frac{b}{2\Ra}\langle (2\alpha+\kappa)u_\tau^2\rangle_{\gamma^-\cup\gamma^+}
\end{align*}
and by Lemma \ref{lemma_u_bounded_by_omega} and the smallness conditions \eqref{proof-main-theorem-essinf-alpha-and-kappa-leq-1}
\begin{align*}
    \langle \|u\|_{H^2}^2 \rangle \leq C \langle |\nabla \omega|^2\rangle + C (1+\|\dot\kappa\|_\infty) \langle \|u\|_{H^1}^2 \rangle.
\end{align*}
For $\mathcal{Q}$ one gets
\begin{align*}
        \mathcal{Q}[u,\theta,\eta]&\geq  a\left[1-3\epsilon C_1-C_2\|\alpha+\kappa\|_\infty^2 \right]\langle |\nabla \omega|^2\rangle 
        \\
        &\qquad + \left[\frac{\underline{\alpha} b}{8\Ra} - a 3\epsilon - a C_\epsilon\left(\left(\frac{1}{\Pra}\left(\|u_0\|_{W^{1,r}} +\underline{\alpha}^{-1}\Ra\right)\right)^2+\|\dot\alpha\|_\infty^2+\|\dot\kappa\|_\infty^2+1\right) \right]\left\langle\|u\|_{H^1}^2\right\rangle
        \\
        &\qquad - a^2 \Ra^2 \langle |\omega|^2\rangle + \left(\frac{5b}{8\Ra}-C\delta^6 a^{-1}\right) \langle |\nabla u|^2\rangle + \frac{b}{2\Ra}\langle (2\alpha+\kappa)u_\tau^2\rangle_{\gamma^-\cup\gamma^+},
\end{align*}
where we used Young's inequality and that $\|\alpha\|_\infty \leq 2$ as $\|\kappa\|_\infty \leq 1$ by \eqref{proof-main-theorem-essinf-alpha-and-kappa-leq-1} and $\|\alpha+\kappa\|_\infty \leq 1$. Setting $\epsilon = \frac{1}{6 C_1}$ and using the smallness assumption $\|\alpha+\kappa\|_\infty^2\leq \bar C= \frac{1}{2C_2}$ the first bracket is positive and we are left with
\begin{equation}
    \label{Q-estimate-before-a0}
    \begin{aligned}
        \mathcal{Q}[u,\theta,\eta] &\geq \left[\frac{\underline{\alpha} b}{8\Ra} - a C\left(\left(\frac{1}{\Pra}\left(\|u_0\|_{W^{1,r}} +\underline{\alpha}^{-1}\Ra\right)\right)^2+\|\dot\alpha\|_\infty^2+\|\dot\kappa\|_\infty^2+1\right) \right]\left\langle\|u\|_{H^1}^2\right\rangle
        \\
        &\qquad - a^2 \Ra^2 \langle |\omega|^2\rangle + \left(\frac{5b}{8\Ra}-C\delta^6 a^{-1}\right) \langle |\nabla u|^2\rangle + \frac{b}{2\Ra}\langle (2\alpha+\kappa)u_\tau^2\rangle_{\gamma^-\cup\gamma^+}.        
    \end{aligned}
\end{equation}
Next we have to differentiate between the two conditions on $\kappa$.
\begin{itemize}
    \item Case $|\kappa|\leq \alpha$
    \vspace{10pt}\\
        In order to estimate the vorticity term notice that by Lemma \ref{lemma_u_bounded_by_omega} and the condition $|\kappa|\leq \alpha$ one has
        \begin{align*}
            \|\omega\|_2^2 &\leq \|\nabla u\|_2^2 + \int_{\gamma^-\cup\gamma^+} |\kappa| u_\tau^2 \leq \|\nabla u\|_2^2 + \int_{\gamma^-\cup\gamma^+} \alpha u_\tau^2 
            \\
            &\leq \|\nabla u\|_2^2 + \int_{\gamma^-\cup\gamma^+} (2\alpha+\kappa) u_\tau^2
        \end{align*}
        and taking the long time average \eqref{Q-estimate-before-a0} turns into
        \begin{align*}
            \mathcal{Q}[u,\theta,\eta] &\geq \left[\frac{\underline{\alpha} b}{8\Ra} - a C\left(\left(\frac{1}{\Pra}\left(\|u_0\|_{W^{1,r}} +\underline{\alpha}^{-1}\Ra\right)\right)^2+\|\dot\alpha\|_\infty^2+\|\dot\kappa\|_\infty^2+1\right) \right]\left\langle\|u\|_{H^1}^2\right\rangle
            \\
            &\qquad + \left[\frac{b}{2\Ra}- a^2\Ra^2\right]  \langle |\omega|^2\rangle + \left(\frac{b}{8\Ra}-C\delta^6 a^{-1}\right) \langle |\nabla u|^2\rangle.
        \end{align*}
        From the second squared bracket on the right-hand side it becomes clear that $a$ has to decay at least as fast as $\Ra^{-\frac{3}{2}}$ for $\mathcal{Q}$ to be non negative. Setting $a=a_0 \Ra^{-\frac{3}{2}}$
        \begin{align*}
            \mathcal{Q}[u,\theta,\eta] &\geq \left[\frac{\underline{\alpha} b}{8\Ra} - \frac{a_0 C}{\Ra^{\frac{3}{2}}} \left(\left(\frac{1}{\Pra}\left(\|u_0\|_{W^{1,r}} +\underline{\alpha}^{-1}\Ra\right)\right)^2+\|\dot\alpha\|_\infty^2+\|\dot\kappa\|_\infty^2+1\right) \right]\left\langle\|u\|_{H^1}^2\right\rangle
            \\
            &\qquad + \frac{1}{2\Ra}\left[b- 2a_0^2\right]  \langle |\omega|^2\rangle + \left(\frac{b}{8\Ra}-C\delta^6 a_0^{-1}\Ra^\frac{3}{2}\right) \langle |\nabla u|^2\rangle.
        \end{align*}
        The assumption $\Pra\geq \frac{1}{\underline{\alpha}^\frac{3}{2}}\Ra^\frac{3}{4}$ implies
        \begin{align*}
            \mathcal{Q}[u,\theta,\eta] &\geq \frac{\underline{\alpha}}{\Ra} \left[\frac{b}{8} - a_0 C \left(\|u_0\|_{W^{1,r}}^2+1+\underline{\alpha}^{-1}\Ra^{-\frac{1}{2}}\left(\|\dot\alpha\|_\infty^2+\|\dot\kappa\|_\infty^2+1\right)\right) \right]\left\langle\|u\|_{H^1}^2\right\rangle
            \\
            &\qquad + \frac{1}{2\Ra}\left[b- 2a_0^2\right]  \langle |\omega|^2\rangle + \left(\frac{b}{8\Ra}-C\delta^6 a_0^{-1}\Ra^\frac{3}{2}\right) \langle |\nabla u|^2\rangle
        \end{align*}
        and since $\Ra^{-\frac{1}{2}}<\underline{\alpha}$
        \begin{align*}
            \mathcal{Q}[u,\theta,\eta] &\geq \frac{\underline{\alpha}}{\Ra} \left[\frac{b}{8} - a_0 C \left(\|u_0\|_{W^{1,r}}^2+\|\dot\alpha\|_\infty^2+\|\dot\kappa\|_\infty^2+1\right) \right]\left\langle\|u\|_{H^1}^2\right\rangle
            \\
            &\qquad + \frac{1}{2\Ra}\left[b- 2a_0^2\right]  \langle |\omega|^2\rangle + \left(\frac{b}{8\Ra}-C\delta^6 a_0^{-1}\Ra^\frac{3}{2}\right) \langle |\nabla u|^2\rangle,
        \end{align*}
        where without loss of generality $C\geq 1$. In order for the two squared brackets to be non-negative we choose
        \begin{align*}
            a_0 = \frac{b}{8C\left(\|u_0\|_{W^{1,r}}^2+\|\dot\alpha\|_\infty^2+\|\dot\kappa\|_\infty^2+1\right)}
        \end{align*}
        and get
        \begin{align*}
            \mathcal{Q}[u,\theta,\eta]
            &\geq \left(\frac{b}{8\Ra} - C\delta^6 a_0^{-1}\Ra^\frac{3}{2}\right)\langle |\nabla u|^2\rangle.
        \end{align*}
        Letting $\delta$ solve $\frac{b}{8\Ra}= C \delta^6 a_0^{-1}\Ra^\frac{3}{2}$, i.e.
        \begin{align*}
            \delta = \left(\frac{a_0 b}{8C}\right)^\frac{1}{6}\Ra^{-\frac{5}{12}}
        \end{align*}
        $\mathcal{Q}$ is non-negative. Now we can come the estimating the Nusselt number. By \eqref{nu-identity-including-Q}
        \begin{align*}
            (1-b(1+\max h -\min h))\Nu + b \leq M \Ra^2 + 2\langle |\nabla \eta|^2\rangle.
        \end{align*}
        The gradient can be estimated by \eqref{nabla-eta-identity}, which yields
        \begin{align*}
            \langle |\nabla \eta|^2\rangle \leq C \delta^{-1}
        \end{align*}
        and plugging in $\delta$ and $M=C a\|\alpha+\kappa\|_{W^{1,\infty}}^2$ and choosing $b = \frac{1}{2(1+\max h - \min h)}$ we find
        \begin{align*}
            \Nu \leq C_\frac{1}{2} \|\alpha+\kappa\|_{W^{1,\infty}}^2 \Ra^\frac{1}{2} + C_\frac{5}{12} \Ra^\frac{5}{12}
        \end{align*}
        with $C_\frac{1}{2} = C(1+\|u_0\|_{W^{1,r}}^2)^{-1}$ and $C_\frac{5}{12} = C \left(\|u_0\|_{W^{1,r}}+\|\dot\alpha\|_\infty+\|\dot\kappa\|_\infty+1\right)^\frac{1}{3}$.
    \item Case $|\kappa|\leq 2\alpha + \frac{1}{4\sqrt{1+(h')^2}}\sqrt{\alpha}$\vspace{10pt}\\
        Using Lemma \ref{lemma_u_bounded_by_omega}, Trace Theorem and $\|\kappa\|_\infty \leq 1$ we can bound the vorticity term by
        \begin{align*}
            \|\omega\|_2^2 \leq C \|u\|_{H^1}^2
        \end{align*}
        and taking the long time average \eqref{Q-estimate-before-a0} turns into
        \begin{align*}
            \mathcal{Q}[u,\theta,\eta] &\geq \left[\frac{\underline{\alpha} b}{8\Ra} - a C\left(\left(\frac{1}{\Pra}\left(\|u_0\|_{W^{1,r}} +\underline{\alpha}^{-1}\Ra\right)\right)^2+\|\dot\alpha\|_\infty^2+\|\dot\kappa\|_\infty^2+1\right) \right]\left\langle\|u\|_{H^1}^2\right\rangle
            \\
            &\qquad - C a^2 \Ra^2 \left\langle\|u\|_{H^1}^2\right\rangle + \left(\frac{5b}{8\Ra}-C\delta^6 a^{-1}\right) \langle |\nabla u|^2\rangle + \frac{b}{2\Ra}\langle (2\alpha+\kappa)u_\tau^2\rangle_{\gamma^-\cup\gamma^+}.        
        \end{align*}
        Again applying \eqref{main-theorem-lemma-H1-bounded-by-grad-bdry-terms-explicite-used-formula} we find
        \begin{equation}
            \label{proof-main-theorem-before-new-second-cases}
            \begin{aligned}
                \mathcal{Q}[u,\theta,\eta] &\geq \left[\frac{\underline{\alpha} b}{8\Ra} - a C\left(\left(\frac{1}{\Pra}\left(\|u_0\|_{W^{1,r}} +\underline{\alpha}^{-1}\Ra\right)\right)^2+\|\dot\alpha\|_\infty^2+\|\dot\kappa\|_\infty^2+1\right) \right]\left\langle\|u\|_{H^1}^2\right\rangle
                \\
                &\qquad + \left[\frac{\underline{\alpha}b}{8\Ra}  - C a^2 \Ra^2\right] \left\langle\|u\|_{H^1}^2\right\rangle + \left(\frac{b}{4\Ra}-C\delta^6 a^{-1}\right) \langle |\nabla u|^2\rangle,
            \end{aligned}
        \end{equation}
        which because of the second squared bracket on the right-hand side imposes the condition on $a$ to decay at least as fast as $\Ra^{-\frac{3}{2}}$. We differentiate between two choices of $a$.
        \begin{itemize}
            \item[\small{$\blacktriangleright$}] 
                Setting $a=a_0\Ra^{-\frac{3}{2}}$ in \eqref{proof-main-theorem-before-new-second-cases} and estimating similar to before, using the assumptions $\Ra^{-1}\leq \underline{\alpha}$ and $\Pra\geq \frac{1}{\underline{\alpha}^\frac{3}{2}}\Ra^\frac{3}{4}$, we find
                \begin{align*}
                    \mathcal{Q}[u,\theta,\eta]
                    &\geq \frac{\underline{\alpha}}{\Ra} \left[\frac{b}{8} - a_0 C\left(\|u_0\|_{W^{1,r}}^2+\underline{\alpha}^{-\frac{1}{2}}\left(\|\dot\alpha\|_\infty^2+\|\dot\kappa\|_\infty^2+1\right)\right) \right]\left\langle\|u\|_{H^1}^2\right\rangle
                    \\
                    &\qquad  +\frac{1}{\Ra}\left(\frac{\underline{\alpha}b}{8} - C a_0^2\right) \langle \|u\|_{H^1}^2\rangle + \left(\frac{b}{4\Ra} - C\delta^6 a_0^{-1}\Ra^\frac{3}{2}\right)\langle |\nabla u|^2\rangle.
                \end{align*}
                Choosing
                \begin{equation*}
                    \begin{gathered}
                        a_0=\frac{\underline{\alpha}^\frac{1}{2} b}{8} \frac{1}{C\left(\|u_0\|_{W^{1,r}}^2+\|\dot\alpha\|_\infty^2+\|\dot\kappa\|_\infty^2+1\right)},\qquad
                        \delta = \left(\frac{a_0 b}{4C}\right)^\frac{1}{6}\Ra^{-\frac{5}{12}},
                        \\
                        b = \frac{1}{2(1+\max h - \min h)}
                    \end{gathered}
                \end{equation*}
                $\mathcal{Q}$ is non-negative and hence
                \begin{align*}
                    \Nu &\leq  C a_0 \|\alpha+\kappa\|_{W^{1,\infty}}^2 \Ra^\frac{1}{2} + C \delta^{-1}
                    \\
                    &\leq C_\frac{1}{2} \underline{\alpha}^\frac{1}{2}\|\alpha+\kappa\|_{W^{1,\infty}}^2 \Ra^\frac{1}{2} + C_\frac{5}{12} \underline{\alpha}^{-\frac{1}{12}} \Ra^\frac{5}{12}
                \end{align*}
                with $C_\frac{1}{2} = C(1+\|u_0\|_{W^{1,r}}^2)^{-1}$ and $C_\frac{5}{12} = C \left(\|u_0\|_{W^{1,r}}+\|\dot\alpha\|_\infty+\|\dot\kappa\|_\infty+1\right)^\frac{1}{3}$.
            \item[\small{$\blacktriangleright$}]
                Setting $a=a_0\Ra^{-\frac{11}{7}}$ in \eqref{proof-main-theorem-before-new-second-cases} and estimating similar to before, using $\Pra\geq \Ra^\frac{5}{7}$, we find
                \begin{align*}
                    \mathcal{Q}[u,\theta,\eta]
                    &\geq \frac{1}{\Ra} \left[\frac{\underline{\alpha} b}{8} - a_0 C \left( \Ra^{-\frac{9}{7}} \|u_0\|_{W^{1,r}}^2 +  \underline{\alpha}^{-2} + \Ra^{-\frac{4}{7}} \left(\|\dot\alpha\|_\infty^2 + \|\dot\kappa\|_\infty^2 + 1\right)\right)\right]\left\langle\|u\|_{H^1}^2\right\rangle
                    \\
                    &\qquad + \frac{1}{\Ra}\left(\frac{\underline{\alpha}b}{8} - C a_0^2 \Ra^{-\frac{1}{7}}\right) \langle \|u\|_{H^1}^2\rangle + \left(\frac{b}{4\Ra} - C\delta^6 a_0^{-1}\Ra^\frac{11}{7}\right)\langle |\nabla u|^2\rangle,
                \end{align*}
                which after choosing
                \begin{equation*}
                    \begin{gathered}
                    a_0 = \frac{\underline{\alpha}b}{8}\frac{1}{C(\|u_0\|_{W^{1,r}}^2+\underline{\alpha}^{-2}+ \|\dot\alpha\|_\infty^2 + \|\dot\kappa\|_\infty^2 + 1)},\qquad \delta = \left(\frac{a_0 b}{4C}\right)^\frac{1}{6} \Ra^{-\frac{3}{7}},
                    \\
                    b = \frac{1}{2(1+\max h - \min h)}
                    \end{gathered}
                \end{equation*}
                is non-negative, implying
                \begin{align*}
                    \Nu \leq C a \|\alpha+\kappa\|_{W^{1,\infty}}^2 \Ra^2 + C \delta^{-1}\leq  C_\frac{3}{7} \Ra^\frac{3}{7},
                \end{align*}
                where $C_\frac{3}{7}= C \left(\|\alpha+\kappa\|_{W^{1,\infty}}^2 + \underline{\alpha}^{-\frac{1}{2}} + \underline{\alpha}^{-\frac{1}{6}}(\|u_0\|_{W^{1,r}}+\|\dot\alpha\|_\infty+\|\dot\kappa\|_\infty+1)^\frac{1}{3}\right)$.
        \end{itemize}
\end{itemize}

\section{Notation}
\begin{itemize}
    \item
    Perpendicular direction:
    \begin{align*}
        a^\perp = \begin{pmatrix}-a_2\\a_1\end{pmatrix}
    \end{align*}
    \item
    Vorticity:
    \begin{align*}
        \omega = \nabla^\perp \cdot u.
    \end{align*}
    \item
    $u_\tau = \tau\cdot u$ is the scalar velocity along the boundary.
    \\
    \item
    Tensor product:
    \begin{align*}
        A \ \colon B = a_{ij}b_{ij}
    \end{align*}
    \item
    If not explicitly stated differently $C$ will denote a positive constant, which might depend on the size of the domain $|\Omega|=\Gamma$, $\|h'\|_{\infty}$ and potentially on the exponent of the Sobolev norm.
    \\
    \item
    $n_+$ is the normal vector pointing upwards.
    \\
    \item
    $n_-$ is the normal vector pointing downwards.
    \\
    \item
    $n$ is the general normal vector with direction pointing outwards its domain.
    \\
    \item
    $\tau=n^\perp$ is the tangential vector oriented in the direction of $n^\perp$. 
    \\
    \item
    $\lambda$ is the parameterization of the boundary by arc-length in the direction of $\tau$.
    \\
    \item
    Variable in the curved domain: $y=\begin{pmatrix}y_1,y_2\end{pmatrix}\in \Omega$
    \\
    \item
    Variable in the straightened domain: $x=\begin{pmatrix}x_1,x_2\end{pmatrix}\in [0,\Gamma]\times[0,1]$
    \\
    \item
    $S$ denotes the integration variable over one-dimensional curves.
    \\
    \item
    If the Lebesgue and Sobolev norms are taken over the whole domain of definition of the function we abbreviate like the following
    \begin{align*}
        \|\alpha+\kappa\|_\infty &= \|\alpha+\kappa\|_{L^\infty(\gamma^-\cup\gamma^+)}, & \|u\|_p &= \|u\|_{L^p(\Omega)}
    \end{align*}
    \item
    For single integrals over the whole domain of integration we skip the integration variable for easier readability, i.e.
    \begin{align*}
        \int_{\gamma^-} \kappa u_\tau^2 &= \int_{\gamma^-} \kappa u_\tau^2 \ dS, & \int_\Omega \omega^2 = \int_{\Omega} \omega^2 \ dy
    \end{align*}
    \item
    $L^p(\gamma^-,\gamma^+)$, depending on $y_1$, denotes the $L^p$-norm along a vertical line defined by
    \begin{align*}
        \|u\|_{L^p(\gamma^-,\gamma^+)}^p=\int_{h(y_1)}^{1+h(y_1)} |u(y_1,y_2)|^p\ dy_2.
    \end{align*}
\end{itemize}

\section{Appendix}
\subsection{Some technical observations}
\begin{itemize}
    \item The curvature on the boundary.\\
        As the bottom boundary can be parameterized by $(y_1,h(y_1))$ the tangential is parallel to $(1,h')$. Taking into consideration the symmetry of the domain, the outward pointing convention for $n$ and the definition of $\tau$, normalizing yields for the normal and tangent vectors
        \begin{align}
            \label{appendix-normal-vector-representation}
            n_\pm &= \pm \frac{1}{\sqrt{1+(h')^2}}\begin{pmatrix}-h'\\1\end{pmatrix},& \tau_\pm &= \mp \frac{1}{\sqrt{1+(h')^2}}\begin{pmatrix}1\\h'\end{pmatrix}.
        \end{align}
        In order to calculate the curvature we find by explicitly calculating
        \begin{align}
            \label{appendix-derivative-of-n}
            \frac{d}{dy_1} \tau_\pm = -\frac{h''}{1+(h')^2} n_\pm
        \end{align}
        and as the arc length parameterization in direction of $\tau_\pm$ is given by
        \begin{align*}
            \lambda(y_1) &= \int_{0}^{y_1} \sqrt{1+(h'(s))^2} \ ds \textnormal{ on }\gamma^-,
            & \lambda(y_1) &= \int_{y_1}^{\Gamma} \sqrt{1+(h'(s))^2} \ ds \textnormal{ on }\gamma^+
        \end{align*}
        one gets
        \begin{align*}
            \frac{d}{d y_1}\lambda(y_1) = \mp \sqrt{1+(h')^2} \textnormal{ on }\gamma^\pm
        \end{align*}
        and using \eqref{appendix-derivative-of-n}
        \begin{align*}
            \frac{d}{d\lambda} \tau_\pm = \mp \frac{1}{\sqrt{1+(h')^2}} \frac{d}{dy_1} \tau_\pm = \pm \frac{h''}{(1+(h')^2)^\frac{3}{2}} n_\pm,
        \end{align*}
        implying
        \begin{align}
            \label{appendix-kappa-representation}
            \kappa = \pm \frac{h''}{(1+(h')^2)^\frac{3}{2}} \textnormal{ on } \gamma^\pm.
        \end{align}
    \item Argument for \eqref{id-kappa}
        \begin{align}
            \label{appendix-proof-id-kappa}
            n\cdot(\tau\cdot \nabla) u
            &= n\cdot (\tau\cdot \nabla) (u_\tau \tau)
            = n\cdot \frac{d}{d\lambda} (u_\tau \tau)
            = u_\tau \kappa\ n\cdot n + n\cdot \tau \frac{d}{d\lambda}u_\tau =\kappa u_\tau.
        \end{align}
    \item Argument for \eqref{id-kappaUtau2-1}, \eqref{id-kappaUtau2-2} and \eqref{id-kappaUtau2-3}\\
    Using \eqref{appendix-proof-id-kappa}
        \begin{align}
            \label{appendix-proof-id-kappaUtau2}
            n\cdot(u\cdot \nabla) u
            &=u_{\tau} n\cdot (\tau\cdot \nabla) u = \kappa u_\tau^2.
        \end{align}

\end{itemize}
\subsection{Scaling of the curvature and friction coefficient with respect to the Rayleigh number}
\hypertarget{kappaScalingTarget}{\textbf{Scaling of the curvature with $\Ra$:}}
In what follows we want to compare two systems in which we vary the height and temperature gap. Notice that, when varying these parameters, the boundary-height $h$ and therefore the curvature $\kappa$ remains the same. In the corresponding nondimensionalized systems instead, as the Rayleigh number changes, the boundary-height function $\hat h$ (and therefore the curvature $\hat \kappa$) will too, as we are now going to show.
\\\\
In order to clarify this point, we want to write the original system
\begin{align}\label{1}
        \partial_t u + u\cdot \nabla u +\nabla \frac{p}{\rho_0} -\nu \Delta u &= -\bar{\alpha} (T-T_0) g\notag
        \\
        \partial_t (T-T_0) + u\cdot \nabla (T-T_0) - \varkappa \Delta (T-T_0) &= 0
        \\
        \nabla \cdot u &= 0\notag
\end{align}
Here the flow evolves in a rectangular domain of height $H$, the bottom boundary is held at temperature $T_0$ and the top boundary at temperature $T_1$. The other parameters are: the density $\rho_0$ at temperature $T_0$, the viscosity $\nu$, the thermal diffusivity $\varkappa$ and the thermal expansion coefficient $\bar{\alpha}$. Moreover we assume that the boundaries are not flat and call $h=h(x_1)$ the boundary-height function. 
In order to nondimensionalize the problem we define the following transformation:
\begin{align*}
        x &= H\hat x 
        \\
         t&= \frac{H^2}{\varkappa}\hat t
        \\
         u&=\frac{\varkappa}{H}\hat u
         \\
         T&=T_0+(T_1-T_0)\hat T
         \\
         p&=\rho_0\left(\frac{\varkappa}{H}\right)^2\hat p\,.
\end{align*}
In the new (nondimensionalized) $\hat{\cdot}$ variables the system is given by
\begin{align}\label{2}
        \frac{1}{\Pr}(\partial_t \hat u + \hat u\cdot \nabla \hat u) +\nabla \hat p -\Delta \hat u &= \Ra \hat T e_z\notag
        \\
        \partial_t \hat T + \hat u\cdot \nabla \hat T-  \Delta \hat T &= 0
        \\
        \nabla \cdot \hat u &= 0\notag\,,
\end{align}
where
\begin{equation}
    \label{Pr_def}
    \Pr=\frac{\nu}{\varkappa}    
\end{equation}
and 
\begin{equation}
    \label{rel}
    \Ra=\frac{\bar{\alpha} g \delta T H^3}{\nu\varkappa}\,.
\end{equation}
We observe that the nondimensionalized boundary-height function $\hat h$ is now rescaled:
\begin{equation}\label{height-f}
h=H\hat h\,.
\end{equation}
Equations \eqref{rel} and \eqref{height-f} yield two observations:
\begin{itemize}
\item Observation 1: Changing the height gap $H$ of the original system, while keeping the temperature gap $\delta T$ and boundary-height function $h$ fixed, results in a change in $\hat h$, while also changing $\Ra$.
\item Observation 2: Changing the temperature gap $\delta T$ of the original system, while keeping the height gap $H$ and boundary-height function $h$ fixed, also result in a change in $\Ra$, while not changing the nondimensionalized boundary-height function $\hat h$.
\end{itemize}

Now recall that by \eqref{kappa_h_relation} the curvature of the nondimensionalized system satisfies
$$\kappa\propto h''\,$$
and notice that by the rescaling defined above we have 
\begin{align*}
    \kappa=\frac{1}{H}\hat{\kappa}, \qquad \kappa'=\frac{1}{H^2}\hat{\kappa}',
\end{align*}
and that 
\begin{equation}\label{kappa}
    \|\hat{\kappa}\|_{W^{1,\infty}}\propto H^2+H.
\end{equation}
We claim that by carefully changing both, the height gap $H$ and also the temperature gap $\delta T$, we can achieve any polynomial scaling of $\|\hat \kappa\|_{W^{1,\infty}}$ with respect to $\Ra$, i.e.
\begin{equation}
    \label{claim}
    \|\hat{\kappa}\|_{W^{1,\infty}}\propto \Ra^{\rho}\,,
\end{equation}
for any exponent $\rho$. 
In order to show this, we compare two identical systems with height and temperature gap $(H_i,\delta T_i)$, $i=1,2$, where these parameter satisfy
\begin{align}
    \label{heightTemperatureRatio}
    \frac{H_{(2)}}{H_{(1)}}=\left(\frac{\delta T_{(2)}}{\delta T_{(1)}} \right)^{\frac{\rho}{2-3\rho}}.   
\end{align}
Then one has (at highest order as typical applications are in the regime $H\gg 1$)
\begin{align*}
    \frac{\|\hat \kappa_{(2)}\|_{W^{1,\infty}}}{\|\hat \kappa_{(1)}\|_{W^{1,\infty}}} 
    &\overset{\eqref{kappa}}{\approx} \left(\frac{H_{(2)}}{H_{(1)}}\right)^2 = \left(\frac{H_{(2)}}{H_{(1)}}\right)^{3\rho}\left(\frac{H_{(2)}}{H_{(1)}}\right)^{2-3\rho}
    \\    &\overset{\eqref{heightTemperatureRatio}}{=} \left(\frac{H_{(2)}}{H_{(1)}}\right)^{3\rho}\left(\frac{\delta T_{(2)}}{\delta T_{(1)}}\right)^{\rho} \overset{\eqref{rel}}{=} \left(\frac{\Ra_{(2)}}{\Ra_{(1)}}\right)^\rho.
\end{align*}
Note that the extreme cases $\rho = \frac{3}{2}$ and $\rho=0$ are covered by systems with the same temperature gap $\delta T$, respectively height $H$.

\textbf{Scaling of the roughness coefficient with $\Ra$:}
A similar argument as for the scaling of the curvature shows a possible scaling of $\alpha$ with respect to $\Ra$: The Navier-Slip boundary conditions can be derived as the small amplitude and high oscillation limit of the boundary roughness with no slip conditions (see \cite{miksisDavisSlipOverRoughAndCoatedSurfaces}). In this derivation the slip coefficient is given by the average of the height function of this roughness. Following the argument for the curvature scaling the slip coefficient in the nondimensionalized system scales as $\hat\alpha\sim \hat h$ and therefore a similar argument yields any scaling of the slip coefficient with respect to the Rayleigh number.

\subsection{Ideas for the proof with different bottom and top boundaries}
In this paper we made the assumption that the top and bottom boundaries are described by the same profile. This assumption is clearly not physical, but was made to not introduce further technical aspects to the proof.  
In order to treat different profiles at the top and bottom boundaries, some parts of the proof would need to be adjusted. Here, without giving details, we explain where the main changes happen. 

A key ingredient is that the Nusselt number identities hold in a similar fashion. In fact Proposition \ref{temp_review_prop_nusselt_identities} would read
\begin{align}
    \label{different_profiles_nu_representation}
    \Nu
    &= \langle |\nabla T|^2 \rangle
    = \langle (uT-\nabla T)\cdot n_+  \rangle_{\gamma^-(x_2)}
    \geq \frac{\langle (u_2-\partial_2)T\rangle}{\max h^+-\min h^-},
\end{align}
for any $0\leq x_2<\min h^+- \max h^-$, where $h^-$ and $h^+$ are the bottom and top boundary-height functions. In fact the first two identities are identical to \eqref{nusselt-gradT} and \eqref{nusselt-strip} as the shape of the top boundary does not influence the proofs, provided that $x_2$ is sufficiently small with respect to the height gap between the boundaries. 

The proof for the lower bound in \eqref{different_profiles_nu_representation} is the same as the proof for \eqref{nusselt-ineq} with the observation that the contribution arising from the top boundary, i.e. the term in $B$, can be neglected because of its sign in \eqref{nusselt_change_different_profiles}. Additionally note that for the proof of Theorem \ref{Lemma-Ra-One-Half-Bound} in this context it would be more convenient to localize the Nusselt number at the bottom boundary in order to match its definition and representation in \eqref{different_profiles_nu_representation}.

Lemma \ref{lemma-H1-bounded-by-grad-and-bdry-terms} would need to be altered significantly. This would result in worse conditions for the curvature in both our results, Theorem \ref{Lemma-Ra-One-Half-Bound} and \ref{main-theorem}. Notice that the background field, i.e. \eqref{def-eta} will now need to match the different top and bottom boundaries. Observe also that the cancellation in \eqref{different_boundaries_not_0_anymore} would not hold anymore, but one could obtain similar bounds by appropriately defining a vector field $\zeta$ that matches the normal at the boundaries. Then, as in \eqref{different_boundaries_not_0_anymore}, for every $y_1$ there exists an $\bar y_2$ such that $\partial_2 (\zeta \cdot u)=0$.
%
The elliptic regularity results in Lemma \ref{lemma_u_bounded_by_omega} will hold, albeit the introduction of technical nuisance.

\subsection*{Acknowledgements}
The authors thank Steffen Pottel for useful feedback and suggestions regarding the manuscript.
FB acknowledges the support by the Deutsche Forschungsgemeinschaft (DFG) within the Research Training Group GRK 2583 "Modeling, Simulation and Optimization of Fluid Dynamic Applications”. CN was partially supported by DFG-TRR181 and GRK-2583. 
\printbibliography

\end{document}